\newlength{\defbaselineskip}
\newcommand{\setlinespacing}[1]%
           {\setlength{\baselineskip}{#1 \defbaselineskip}}
\theoremstyle{plain}
\newtheorem{thm}{Theorem}[section]
\newtheorem{cor}[thm]{Corollary}
\newtheorem{lem}[thm]{Lemma}
\newtheorem{prop}[thm]{Proposition}
\newtheorem{exam}[thm]{Example}
\newtheorem{rem}[thm]{Remark}
\newtheorem{thmA}{Theorem}
\newtheorem{lemA}[thmA]{Lemma}
\makeatletter\@addtoreset{equation}{section} \makeatother
\begin{document}

\title {The
Kozlov completeness problem
}
\author{
Hui Dan \quad Kunyu Guo
}
\date{}
 \maketitle

\maketitle \noindent\textbf{Abstract:} This paper concerns a long-standing problem raised by
Kozlov on completeness of  the dilation
systems $\{\mathbf{1}_{(\alpha,\beta)}(kx):k=1,2,\cdots\}$  generated by  odd periodic extensions on  $\mathbb{R}$ of 
 characteristic functions $\mathbf{1}_{(\alpha,\beta)}$, where
 $0\leq\alpha<\beta\leq1$.
Up
to now there has only some fragmentary  results under the assumption $\alpha=0$. Focusing  on the dilation completeness problem for  characteristic functions $\mathbf{1}_V$ of   open subsets $V\subset(0,1)$ that are finite  unions of   intervals with rational endpoints,  we exhibit the exact forms of such $V$ in almost all  interesting situations by using substantially
 techniques from analytic number theory. As a consequence, it yields a  complete solution for  the rational version of the
Kozlov completeness problem. Moreover, our results also illustrate the fascinating connection among the Completeness Problem,  the Twin Prime Conjecture and the  Sophie Germain Prime Conjecture.


\vskip 0.1in \noindent \emph{Keywords:}
Beurling-Wintner  problem;  Kozlov completeness problem; Hardy space; Dirichlet series; Dirichlet character; Twin Prime Conjecture.

\vskip 0.1in \noindent\emph{2020 Mathematics Subject Classifications:} 42A65;  30B60; 30B50; 47A16.

\tableofcontents

\section{Introduction}

The  space $L^2=L^2[0,1]$ has a canonical orthonormal basis $\{\sqrt{2}\sin (n\pi x): n=1,2,\cdots\}$, and hence each  $\varphi\in L^2$  has its
 Fourier-sine expansion $$\varphi(x)=\sum_{n=1}^{\infty}a_n\sqrt{2}\sin (n\pi x).$$
In this way we  always think  later  that  each function $\varphi$ in $L^2$ is an  odd $2$-periodic function
on the real line $\mathbb{R}$,  still denoted by $\varphi$.
In 1940s, Beurling and Wintner  considered the following fundamental  problem in
classical analysis \cite{Beu,Win}.

\vskip2mm
\noindent\textbf{Beurling-Wintner Problem:} \emph{for which $\varphi\in L^2$, the linear span of the following periodic  dilation system (\text{p.d.s.})
\begin{equation}\label{psd}
 \{\varphi(kx):k=1,2,\cdots\}.
\end{equation}
is dense on $L^2$?  }
\vskip2mm

The B-W problem receives considerable attentions. We refer the readers to some relevant literatures \cite{Bal, BM, Bou1, Bou2, Har, Sz, HW, Koz1, Koz2, Koz3, Ah, NGN, HLS, Mit, Ni1, Ni2, Ni3, Ni5, Ni6, No, DG, MNS}.
In 1948-1950,  Kozlov \cite{Koz1, Koz2, Koz3} initiated the study for the  dilation completeness problem of  characteristic functions of subintervals of $(0,1)$, which served as an important gateway to the B-W problem. The following problem was raised by Kozlov in the paper \cite{Koz3} in 1950 (also see \cite{Ni3, Ni5, DG, Ni4}).
\vskip2mm
\noindent\textbf{Kozlov Completeness Problem:} \emph{for which subinterval $(\alpha,\beta)$ of $(0,1)$ is the \text{p.d.s.} (\ref{psd}) of the characteristic function $\mathbf{1}_{(\alpha,\beta)}$  complete?}
\vskip2mm

 Up to now only some fragmentary  results under the assumption $\alpha=0$ were known on the Kozlov completeness problem.
 Let $\mathcal{C}$ be the set of functions generating complete periodic dilation systems. The conclusion $\mathbf{1}_{(0,1)}\in\mathcal{C}$  was obtained in \cite{Ah} with a long proof.
 Nikolski proved that
$\mathbf{1}_{(0,\beta)}\in\mathcal{C}$  for $\beta=\frac{1}{2},\frac{2}{3}$ and $\mathbf{1}_{(0,\beta)}\notin\mathcal{C}$  for $\beta$ in a neighborhood of $\frac{1}{3}$ or $\beta=\frac{q}{p}$, where $p$ is an odd prime and $q$ is odd with
 $\sin^2\frac{q\pi}{2p}<\frac{1}{p+1}$, which were claimed in \cite{Koz3} (also see \cite{Ni3,Ni5,Ni6,DG,Ni4}). The proofs were exhibited in Nikolski's talk in 2018 \cite{Ni4}. Moreover, he also proved that the \text{p.d.s.}  of
$\mathbf{1}_{(0,\frac{1}{4})}$ is incomplete.

  It is conceivable that any solid progress on the
completeness problem will be highly non-trivial in light of both the problem
itself and its connection to widely open unsolved problems in other areas.
In fact, the completeness problem is  shown to be equivalent to cyclicity of
certain functions in the Hardy spaces of Dirichlet series as well as functions
over infinite polydisk (see \cite{HLS,Ni1}, and yet
determining cyclic vectors in multi-variate Hardy space is still a long-standing
open problem in operator theory.
Some evidences suggest  that there exists a  powerful connection
between the completeness problem and the Riemann Hypothesis \cite{Bal, Ny,BD, No}.

We  refer to \cite{HLS,McCa} for a complete solution when the \text{p.d.s.}  of an $L^2$-function  $\varphi$ forms a Riesz basis of $L^2$, which is a special complete system.
Also see some  closely related works \cite{Da1, Ro1, Ro2, CH1, CH2, HS,  BBB1, BBB2, DGMS, QQ}.

In this paper, we make it explicit by attacking the completeness problem
with a substantial involvement of techniques from analytic number theory.
We focus on characteristic function $\mathbf{1}_V$ of  an open subset $V$ of $(0,1)$
where $V$ is the union of finitely many intervals with rational endpoints. In particular,
we completely settle the rational version of the Kozlov completeness problem:

\begin{thm}\label{interval}
   Let $\alpha,\beta$ be two rational numbers with $0\leq \alpha<\beta\leq1$ and put $I=(\alpha,\beta)$. Then $\mathbf{1}_I\in\mathcal{C}$  if and only if $I$ is one of the following $10$ intervals:
$$(0,1), (0,\frac{1}{2}), (\frac{1}{2},1), (0,\frac{2}{3}), (\frac{1}{3},\frac{2}{3}), (\frac{1}{3},1),(\frac{1}{4},\frac{3}{4}),(\frac{1}{5},\frac{3}{5}),
(\frac{2}{5},\frac{4}{5}),(\frac{1}{6},\frac{5}{6}).$$
 \end{thm}

%
\vskip2mm
When $V$ consists of more than one intervals, the situation becomes more interesting and more  challenging.  By  developing new
machineries to deal with zeros of Dirichlet series,  the \textit{generalized Kozlov  completeness problem} can be fully solved in a  completely general case.

To begin with, let $0\leq\frac{s_1}{t_1}<\frac{s_1'}{t_1'}<\cdots<\frac{s_N}{t_N}<\frac{s_N'}{t_N'}\leq1 \ (N\in\mathbb{N})$ be $2N$ irreducible fractions. Set \begin{equation}\label{Vdisplay}
V=(\frac{s_1}{t_1},\frac{s_1'}{t_1'})
\cup\cdots\cup(\frac{s_N}{t_N},\frac{s_N'}{t_N'}),
\end{equation}
 and \begin{equation}\label{tdisplay}
 t_V=\mathrm{lcm}(t_1, t_1', \cdots, t_N, t_N').
 \end{equation}
  Here, the notation ``lcm" in (\ref{tdisplay}) stands for ``the least common multiple".
   Then $V$ is an open subset of $(0,1)$ with $N$ component intervals, and can be rewritten as
 \begin{equation}\label{Vdisplay'}
   V=(\frac{\alpha_1}{t_V},\frac{\beta_1}{t_V})\cup\cdots\cup(\frac{\alpha_N}{t_V},\frac{\beta_N}{t_V}),
 \end{equation} 
 with $\gcd(\alpha_1,\beta_1,\cdots,\alpha_N,\beta_N,t_V)=1.$

 In this situation the generalized Kozlov  completeness problem is to  give out  the explicit form of $V$ such that  $\mathbf{1}_V\in\mathcal{C}$.
 We exactly determine the case  $t_V\leq6$ (see Theorem \ref{1to6}).  On the other hand, for the case $t_V\geq7$ we also prove, under fairly general assumptions, that $V$  are  of  certain particular  forms. For this, set $$V_{t,0}=\begin{cases}
                        (0,\frac{1}{t})\cup(\frac{2}{t},\frac{3}{t})\cup\cdots\cup(\frac{t-2}{t},\frac{t-1}{t}), & t=2,4,6,\cdots; \\
                        (0,\frac{1}{t})\cup(\frac{2}{t},\frac{3}{t})\cup\cdots\cup(\frac{t-1}{t},1), &  t=3,5,7,\cdots,
                      \end{cases}
       $$ and $$V_{t,1}=\begin{cases}
                        (\frac{1}{t},\frac{2}{t})\cup(\frac{3}{t},\frac{4}{t})\cup\cdots\cup(\frac{t-1}{t},1), & t=2,4,6,\cdots; \\
                        (\frac{1}{t},\frac{2}{t})\cup(\frac{3}{t},\frac{4}{t})\cup\cdots\cup(\frac{t-2}{t},\frac{t-1}{t}), &  t=3,5,7,\cdots.
                      \end{cases}
       $$
We will show that $\mathbf{1}_{V_{t,0}},\mathbf{1}_{V_{t,1}}\in\mathcal{C}$ always holds for every $t\geq2$ (see Example \ref{Vt0Vt1}).
But, more surprisingly, the converse remains true under the hypothesis in Theorem \ref{Vtthm} below.
\begin{thm}\label{Vtthm}
  Let $V$  be given as in (\ref{Vdisplay'}). Suppose that $t_V\geq7$, $3\nmid t_V$, and there exists
 a boundary point $\frac{s}{t_V}$ of $V$ with   $\gcd(s,t_V)=1$. Then
   $\mathbf{1}_V\in\mathcal{C}$ if and only if
  $V=V_{t_V,0}$ or $V_{t_V,1}$.
\end{thm}

 As a consequence, the case when  $t_V$ is a power of some prime is  completely solved.

 \begin{thm}\label{pkthm}
   Let $V$  be given as in (\ref{Vdisplay'}). Suppose that $t_V\geq7$ and $t_V=p^k$ for some  prime $p$ and some $k\in\mathbb{N}$. Then
   $\mathbf{1}_V\in\mathcal{C}$ if and only if
  $V=V_{p^k,0}$ or $V_{p^k,1}$.
 \end{thm}

Note that Theorem \ref{pkthm} fails for $t_V=3, 4, 5$ (see Theorem \ref{1to6}).

\vskip2mm

The following two examples illustrate that neither of  two assumptions (``$3\nmid t_V$" and `` a boundary point $\frac{s}{t_V}$ of $V$ with   $\gcd(s,t_V)=1$")
in Theorem \ref{Vtthm} can be dropped (see Examples \ref{Vtexam1} and \ref{Vtexam2} for details):
\begin{itemize}
  \item [(1)] For $V=(\frac{1}{15},\frac{7}{15})\cup(\frac{11}{15},\frac{13}{15})$,  $W=(\frac{2}{15},\frac{4}{15})\cup(\frac{8}{15},\frac{14}{15})$ we have $\mathbf{1}_V,\mathbf{1}_W\in\mathcal{C}$.
  \item [(2)] Suppose that $p_1, p_2$ are twin primes ($p_1=p_2+2$), and set $$V=(\frac{1}{p_1},\frac{1}{p_2})\cup(\frac{3}{p_1},\frac{3}{p_2})\cup\cdots
     \cup(\frac{2m-1}{p_1},\frac{2m-1}{p_2})\cup\cdots\cup(\frac{p_1-2}{p_1},1)$$
     and $$W=(0,\frac{2}{p_1})\cup(\frac{2}{p_2},\frac{4}{p_1})\cup\cdots\cup(\frac{2m-2}{p_2},\frac{2m}{p_1})
     \cup\cdots\cup(\frac{p_2-1}{p_2},\frac{p_1-1}{p_1}).$$
     Then $\mathbf{1}_V,\mathbf{1}_W\in\mathcal{C}$.
\end{itemize}
Theorem  \ref{Vtthm} indicates that under mild conditions,  $V$ with $\mathbf{1}_V\in\mathcal{C}$ has form $V_{t,0}$ or $V_{t,1}$. One may  want to know exactly what  ``exceptions" are.
Example (2) above  provides us with some information.
 An open subset $V$ of $(0,1)$ is said to be non-degenerated
if $(0,1)\setminus V$ has no isolated point.

\begin{thm}\label{titjthm}
  Let $V$ be a proper and non-degenerated open subset of $(0,1)$, and $\{\frac{s_i}{t_i}\}_{i=1}^{M}$ be all  boundary points of $V$  in $(0,1)$ with  $\mathrm{gcd}(s_i,t_i)=1$ for all $i$.
  Assume that not all denominators $t_i$ are the same, and
  \begin{itemize}
    \item [(1)] for each $1\leq i\leq M$, $t_i$ is odd,  and $t_i$ is not a product of two primes;
    \item [(2)] for each pair $1\leq i,j\leq M$, either $t_i=t_j$ or $\gcd(t_i,t_j)=1$.
  \end{itemize}
Then
   $\mathbf{1}_V\in\mathcal{C}$ if and only if there exist two primes $p_1,p_2$ such that $V$ has one  of  the following forms:
    \begin{itemize}
   \item [(i)] $p_1=p_2+2$ and
     $$V=(\frac{1}{p_1},\frac{1}{p_2})\cup(\frac{3}{p_1},\frac{3}{p_2})\cup\cdots
     \cup(\frac{2m-1}{p_1},\frac{2m-1}{p_2})\cup\cdots\cup(\frac{p_1-2}{p_1},1);$$
     \item [(ii)]   $p_1=p_2+2$ and
  $$V=(0,\frac{2}{p_1})\cup(\frac{2}{p_2},\frac{4}{p_1})\cup\cdots\cup(\frac{2m-2}{p_2},\frac{2m}{p_1})
     \cup\cdots\cup(\frac{p_2-1}{p_2},\frac{p_1-1}{p_1});$$
    \item [(iii)]   $p_1=2p_2+1$ and
     $$V=(0,\frac{2}{p_1})\cup(\frac{1}{p_2},\frac{4}{p_1})\cup\cdots\cup(\frac{m-1}{p_2},\frac{2m}{p_1})
     \cup\cdots\cup(\frac{p_2-1}{p_2},\frac{p_1-1}{p_1});$$
     \item [(iv)] $p_1=2p_2-1$ and
     $$V=(0,\frac{1}{p_2})\cup(\frac{2}{p_1},\frac{2}{p_2})\cup\cdots\cup(\frac{2m-2}{p_1},\frac{m}{p_2})
     \cup\cdots\cup(\frac{p_1-1}{p_1},1).$$
      \end{itemize}
\end{thm}

            A prime $p$ is called a Sophie Germain prime if $2p+1$ or $2p-1$ also is a prime. It remains a famous unanswered
question whether there are infinitely many Sophie Germain primes \cite{Du}.   Theorem \ref{titjthm} illustrates the fascinating connection among the Completeness Problem,  the Twin Prime Conjecture and the  Sophie Germain Prime Conjecture.
 Motivated by Theorem 1.1, we conjecture that there are only finitely many exceptions $V$ (i.e. $V\neq V_{t_V,0},V_{t_V,1}$) such that each denominator $t_i$ is an odd prime and $\mathbf{1}_V\in\mathcal{C}$. However, from Theorem \ref{titjthm}, this  is equivalent to that the following two statements hold simultaneously:
\begin{itemize}
  \item [(i)] there are only finitely many pairs of twin primes.
  \item [(ii)] there are only finitely many Sophie Germain primes.
\end{itemize}
\vskip2mm

The aforementioned theorems will be proved in Sections 6-8. In sections 2,3,4,5 we provide both necessary technical preparations and conceptual guidelines.

As we will explain in Subsection 2.2, a classical treatment to the completeness problem  is to invoke  a unitary transform, the so-called Beurling-Wintner transform (see (\ref{bw-trans})), which reduces the problem into a cyclic vector problem for multiplier algebra of the Hardy space of Dirichlet series.
Via the B-W transform,  the characteristic function $\mathbf{1}_V$ ($V$   given as in (\ref{Vdisplay'}))
  corresponds to  a translation $D(s+1)$ of some Dirichlet series $D$ with periodic coefficients.
Then we will apply a  result in  \cite{SW} to  factorize $D$, provided $D(s+1)$ to be cyclic, into a product of some Dirichlet polynomial and some Dirichlet-$L$ function.
This  reduces  the problem to the existence of such a decomposition and   the treatment  of zeros of the Dirichlet polynomial
\begin{equation}\label{PinIntro}
  P(s)=\sum_{d\mid q}\frac{(f\ast\mu\,\psi)(d)}{d^s}
\end{equation}
in some half-plane, where $q=2t_V$, $\psi$ is a primitive Dirichlet character (see Subsection 2.1 for the definition), and $\frac{\sqrt{2}}{2\pi n}f(n)$  is the Fourier-sine coefficients of $\mathbf{1}_V$, that is, $$\frac{\sqrt{2}}{2\pi n}f(n)=\int_{0}^{1}\mathbf{1}_V\cdot\sqrt{2}\sin (n\pi x)\mathrm{d}x,\quad n=1,2,\cdots.$$ See Subsection 3.1 for details.

%
As it is expectably  difficult to check decomposability of $D$, our observation is that the finite Fourier transform
$$g(m)=\frac{1}{q}\sum_{n=1}^{q}f(n)e^{-2\pi  imn/q},\quad m=1,2,\cdots$$
 of $f$ actually carries the information from boundary points of $V$.
 More precisely, $g(m)$ coincides with the jump of $\mathbf{1}_V$ at the point $\frac{m}{t_V}$.
 With the aid of the theory of the finite Fourier transform, we can establish new criteria of the Kozlov problem for $\mathbf{1}_V$ by using  the information from boundary points of $V$ instead of the Fourier-sine coefficients of $\mathbf{1}_V$. 

 In Section 4, we  determine the Dirichlet character $\psi$ appearing in (\ref{PinIntro}), which is of independent number theoretic interest. We will show if $\mathbf{1}_V\in\mathcal{C}$ then $\psi\equiv1$ under the assumptions $t_V\geq7$ and $3\nmid t_V$ (Theorem \ref{criterion}). The key ingredient to the proof of Theorem \ref{criterion} is a result in pure number theory (Lemma \ref{keylem}), which merely invokes the regularity of distribution of non-vanishing values of $g$.

In Section 5, we establish a formula for the number of  component intervals of $V$ with $\mathbf{1}_V\in\mathcal{C}$ by using the Euler totient function $\phi$.  As a consequence,
we show $t_V\leq6$ when $V$ is a subinterval of $(0,1)$ satisfying $\mathbf{1}_V\in\mathcal{C}$, which together with Theorem \ref{1to6} immediately gives Theorem \ref{interval}.
Moreover,
let $\mathcal{V}_n\ (n\in\mathbb{N})$ denote  the set of all non-degenerated open subsets $V$ of $(0,1)$ which have  at most $n$  component intervals with rational boundary points. It is  shown that the set $\big\{V\in \mathcal{V}_n: \mathbf{1}_V\in\mathcal{C}\big\}$ has  at most ${\binom {2n} {t(n)+1}}$ elements,
where $$t(n)=\prod_{\substack{p\leq 4n+1 \\ p\,\text{prime}}}p^{[\log_p6n]}.$$


\section{Preliminaries}

In this paper, we shall need a number of number-theoretic concepts and related results to  establish  criteria for completeness in more general case.
As a preliminary, this section consists of two parts. In the first part, we briefly recall some basic materials from analytic number theory. We refer the reader to \cite{Apo,Co,Da2,MV,O} for more details.
In the second part, we list some preparatory results on zeros of functions, which will be used in the establishment of  criteria for completeness.

\subsection{Some elements from analytic number theory}

An arithmetical function is a function defined on the set $\mathbb{N}$ of positive integers. The M\"{o}bius function $\mu$ and the Euler totient function $\phi$ are among the most basic arithmetical functions, where
\begin{equation*}
  \mu(n)=\begin{cases}
           1, &  n=1; \\
           (-1)^k, & n\mbox{ is the product of }k\mbox{ distinct primes}; \\
           0, & \mbox{otherwise},
         \end{cases}
\end{equation*} and $$\phi(n)=\sharp\{m:1\leq m\leq n,\gcd(m,n)=1\},\quad n=1,2,\cdots.$$
An arithmetical function $f$ is said to be multiplicative if
$f(1)=1$ and  $f(mn)=f(m)f(n)$ for any $m,n\geq1$ with $\gcd(m,n)=1$;
$f$ is said to be completely multiplicative
if
$f(1)=1$ and
$f(mn)=f(m)f(n)$ for any $m,n\geq1$. Both $\mu$ and $\phi$ are multiplicative.

Dirichlet characters with modulus $q$  arise from  characters
 of the group $(\mathbb{Z}/q\mathbb{Z})^\times$ of reduced residue classes mod $q$.
 Recall that a character of an abelian group $G$ is a group homomorphism from $G$ to the multiplicative group $\mathbb{C}^\times$ of nonzero complex numbers, and a character  is said to be principle if its value is identically $1$.
 Equivalently, Dirichlet characters $\chi$ mod $q$ can be also defined to be
completely multiplicative  and periodic arithmetical functions with period $q$, such that $\chi(n)\neq0$ if $\mathrm{gcd}(n,q)=1$, while $\chi(n)=0$ if $\mathrm{gcd}(n,q)>1$.
Correspondingly, a Dirichlet character is said to be principle if it only takes values $0$ or $1$.
There are exactly $\phi(q)$ distinct Dirichlet characters mod $q$.

 Suppose that $\chi$ is a Dirichlet character mod $q$.
 We say a Dirichlet character $\psi$ mod $d$ induces $\chi$ if
 $d\mid q$ and $\chi$ admits a decomposition $\chi=\psi\chi_q$,
where $\chi_q$ is the principle Dirichlet character mod $q$.
The Dirichlet character $\chi$ is said to be primitive if there is no proper divisor $d$ of $q$, such that $\chi$ is induced by a Dirichlet character mod $d$. From the definitions, it is readily seen that every Dirichlet character is uniquely induced by a primitive Dirichlet character.

The Legendre symbol
$(n\mid p)$ for an  odd prime $p$  is a primitive Dirichlet character mod $p$, which is defined as
\begin{equation*}
  (n\mid p)=\begin{cases}
              0, &  p\mid n; \\
              1, &  n\equiv m^2\,(\mathrm{mod}\,p)
              \mbox{ for some }1\leq m\leq p-1; \\
              -1, & \mbox{otherwise}.
            \end{cases}
\end{equation*}



 For any Dirichelt character $\chi$ mod $q$, the Gauss sum associated with $\chi$ is defined as
 $$\tau(n,\chi):=\sum_{m=1}^{q}\chi(m)e^{2\pi imn/q}.$$
The Gauss sum $\tau(1,\chi)$ is abbreviated as  $\tau(\chi)$.
The following formula  allows us to calculate $\tau(n,\chi)$ in terms of the primitive Dirichlet character inducing $\chi$ \cite[pp. 444-450]{Has} (also see \cite[Theorem 9.12]{MV}).

\begin{lem} \label{tau lemma}
  Let $\chi$ be a Dirichlet character mod $q$ induced by  the primitive Dirichlet character $\psi$ mod $q_0$.  Then for each $n\geq1$,
  $$\tau(n,\chi)=\frac{\phi(q)}{\phi(\frac{q}{\widehat{n}})}
  \mu(\frac{q}{\widehat{n}q_0})\psi(\frac{q}{\widehat{n}q_0})
  \overline{\psi(\frac{n}{\widehat{n}})}\tau(\psi),$$
where $\widehat{n}=\gcd(n,\frac{q}{q_0})$.
  In particular, when $q_0\nmid \frac{q}{\gcd(n,q)}$, we have
  $\tau(n,\chi)=0$ since $\psi(\frac{n}{\widehat{n}})=0$ in this case.
\end{lem}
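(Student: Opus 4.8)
The plan is to reduce the imprimitive Gauss sum $\tau(n,\chi)$ to the primitive one $\tau(\psi)$ by peeling off the extra modulus $r:=q/q_0$, using only the complete multiplicativity and periodicity of $\psi$ together with the separability of primitive Gauss sums. Throughout I write $\chi=\psi\chi_q$ with $\chi_q$ the principal character mod $q$, and I would freely use that for the \emph{primitive} character $\psi$ mod $q_0$ one has the separability identity $\sum_{a=1}^{q_0}\psi(a)e^{2\pi i a\nu/q_0}=\overline{\psi(\nu)}\,\tau(\psi)$ for every integer $\nu$ (a standard fact, both sides vanishing when $\gcd(\nu,q_0)>1$).

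First I would expose the coprimality condition hidden in $\chi_q$ by M\"obius inversion: since $\chi_q(m)=\sum_{d\mid\gcd(m,q)}\mu(d)$, substituting and interchanging the order of summation gives
\begin{equation*}
\tau(n,\chi)=\sum_{d\mid q}\mu(d)\!\!\sum_{\substack{m=1\\ d\mid m}}^{q}\!\!\psi(m)\,e^{2\pi i mn/q}=\sum_{d\mid q}\mu(d)\psi(d)\sum_{m'=1}^{q/d}\psi(m')\,e^{2\pi i m'n/(q/d)},
\end{equation*}
where I put $m=dm'$ and used $\psi(dm')=\psi(d)\psi(m')$. Only squarefree $d$ with $\gcd(d,q_0)=1$ survive (because of $\mu(d)$ and $\psi(d)$), and for such $d$ one has $d\mid r$ and $q/d=q_0k$ with $k=r/d$. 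Next I would evaluate the inner ``stretched'' sum modulo $q_0k$ by splitting $m'=a+q_0j$ with $a$ running over residues mod $q_0$ and $0\le j<k$; periodicity of $\psi$ factors the exponential, the $j$-sum collapses to $k$ when $k\mid n$ and to $0$ otherwise, and separability turns what remains into $k\,\overline{\psi(n/k)}\,\tau(\psi)$. Re-indexing by $k=r/d$ and noting that the constraint $k\mid n$ forces $k\mid\gcd(n,r)=\widehat n$, I arrive at
\begin{equation*}
\tau(n,\chi)=\tau(\psi)\sum_{k\mid\widehat n} k\,\mu(r/k)\,\psi(r/k)\,\overline{\psi(n/k)}.
\end{equation*}

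The real work — the step I expect to be the main obstacle — is to collapse this divisor sum into the single closed form in the statement; one must resist the temptation to keep only $k=\widehat n$, since the remaining terms genuinely contribute. Substituting $j=\widehat n/k$ and using complete multiplicativity to extract $\psi(r/\widehat n)\,\overline{\psi(n/\widehat n)}$ (so the leftover factors give $\psi(j)\overline{\psi(j)}$, which is $1$ if $\gcd(j,q_0)=1$ and $0$ otherwise) reduces the sum to $\widehat n\,\psi(r/\widehat n)\overline{\psi(n/\widehat n)}\sum_{j\mid\widehat n,\,\gcd(j,q_0)=1}j^{-1}\mu((r/\widehat n)j)$. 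When $r/\widehat n$ is not squarefree every term carries $\mu=0$ and the target also vanishes through $\mu(r/\widehat n)$; when $r/\widehat n$ is squarefree this becomes $\mu(r/\widehat n)\sum_{j\mid\widehat n,\,\gcd(j,\,q_0 r/\widehat n)=1}\mu(j)/j$, and everything reduces to the purely multiplicative identity
\begin{equation*}
\sum_{\substack{j\mid\widehat n\\ \gcd(j,\,q_0 r/\widehat n)=1}}\frac{\mu(j)}{j}=\frac{\phi(q)}{\widehat n\,\phi(q/\widehat n)}.
\end{equation*}
I would prove this by evaluating the left side as the Euler product $\prod_{p\mid\widehat n,\ p\nmid q_0 r/\widehat n}(1-1/p)$ and comparing it prime-by-prime with the right side, checking via $v_p(r)=v_p(q)-v_p(q_0)$ and $\widehat n\mid r$ that the two conditions ``$p\nmid q_0$ and $v_p(\widehat n)=v_p(r)$'' and ``$v_p(\widehat n)=v_p(q)$'' cut out exactly the same primes; this yields the asserted formula.

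Finally, for the vanishing clause I would just unwind the definitions prime-by-prime: with $v_p(n)=a$, $v_p(q)=b$, $v_p(q_0)=c$ one has $v_p(n/\widehat n)=\max(0,a-b+c)$, so $\gcd(n/\widehat n,q_0)>1$ amounts to the existence of a prime $p\mid q_0$ with $a>b-c=v_p(q/q_0)$, which is precisely the failure $q_0\nmid q/\gcd(n,q)$. In that case $\psi(n/\widehat n)=0$ and the whole expression vanishes, completing the proof.
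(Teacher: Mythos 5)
Your proposal is correct, and every step checks out: the M\"obius expansion $\chi_q(m)=\sum_{d\mid\gcd(m,q)}\mu(d)$, the observation that surviving $d$ (squarefree, coprime to $q_0$) necessarily divide $r=q/q_0$, the splitting $m'=a+q_0j$ with the $j$-sum detecting $k\mid n$, the separability identity for the primitive $\psi$ (valid for all $\nu$, both sides vanishing when $\gcd(\nu,q_0)>1$), the collapse of the divisor sum via $j=\widehat n/k$ with the $|\psi(j)|^2$ and squarefreeness bookkeeping handled correctly, and the closing Euler-product identity, whose prime-by-prime verification (conditions $v_p(\widehat n)=v_p(r)$, $p\nmid q_0$ versus $v_p(\widehat n)=v_p(q)$) is exactly right, as is the valuation computation $v_p(n/\widehat n)=\max(0,a-b+c)$ for the vanishing clause. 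One point of comparison worth making explicit: the paper gives no proof of this lemma at all — it is quoted from Hasse [Has, pp.~444--450] and Montgomery--Vaughan [MV, Theorem 9.12] — so your argument supplies what the paper outsources. What you have written is essentially the standard textbook derivation of the imprimitive Gauss sum formula (reduce to the primitive character by peeling off the principal character mod $q$, then exploit separability), so it buys self-containedness at the cost of roughly a page; the paper's citation buys brevity and defers the arithmetic to the literature. Either way the statement, including the ``in particular'' vanishing clause, is fully established by your argument.
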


For an arithmetical function $f$ we put
  $$D_f(s):=\sum_{n=1}^\infty\frac{f(n)}{n^s}.$$
  Then  $D_{f\ast g}=D_f\cdot D_g$ for any arithmetical functions $f,g$, where the Dirichlet convolution $f\ast g$ is given by
  $$(f\ast g)(n)=\sum_{d\mid n}f(\frac{n}{d})g(d),\quad n=1,2,\cdots.$$

If $\chi$ is a Dirichlet character, one defines its Dirichlet $L$-function by
$$ L(s,\chi )=D_{\chi}(s)=\sum _{n=1}^{\infty }\frac {\chi (n)}{n^{s}},\quad s\in\mathbb{C}_1.$$
 Here for a real number $\sigma $,   $$\mathbb{C}_\sigma :=\{s\in\mathbb{C}: \mathrm{Re} \, s>\sigma\}.$$ By analytic continuation, this function can be extended to a meromorphic function on the whole complex plane. Dirichlet $L$-functions are generalizations of the Riemann zeta-function. The case $\chi\equiv1$ corresponds to the Riemann zeta function $$\zeta(s)=1+\frac{1}{2^s}+\frac{1}{3^s}+\cdots.$$
Notice that  $L$-function of $\chi$  can  be written as an Euler product:
 $$L(s,\chi )=\prod_{p \,\text{prime}}\left(1-\chi(p)p^{-s}\right)^{-1},\quad s\in\mathbb{C}_1.$$
   Therefore, each Dirichlet $L$-function has no zeros in $\mathbb{C}_1$.

Dirichlet $L$-function was  introduced by Dirichlet to prove his celebrated
 theorem on arithmetic progressions. Dirichlet's theorem states that for any pair $a,q$ of relatively prime positive integers,
the arithmetic progression
$$a,\ a+q,\ a+2q,\cdots$$
contains infinitely many primes.

From now on, we  fix a positive integer $q$ in this subsection.
Let $H_q$ denote the linear space of all periodic arithmetic functions with period $q$, equipped with the inner product
$$\langle f,g\rangle=\sum_{n=1}^{q}f(n)\overline{g(n)},\quad f,g\in H_q.$$    For a divisor $d$ of $q$, and each Dirichlet character $\chi$ mod $\frac{q}{d}$, define
$$\xi_\chi(n)=\begin{cases}
                        \chi(\frac{n}{d}), &  d\mid n; \\
                        0, & d\nmid n.
                        \end{cases}$$
It was proved in \cite{CDZ} that there are exactly $q$ functions of the type $\xi_\chi$, and  these functions forms an orthogonal basis for $H_q$.

Let $\mathcal{P}_q$ denote the set of primitive Dirichlet characters  whose moduluses  are  divisors of $q$. For $\psi\in\mathcal{P}_q$ with  modulus $q_0$,   set $E_{q,\psi}$ to be the subspace of $H_q$ spanned by $$\{\xi_{\psi_{\frac{q}{d}}}:d\mid \frac{q}{q_0}\},$$
where $\psi_{\frac{q}{d}}$ is the Dirichlet character mod $\frac{q}{d}$ induced by $\psi$. Then $H_q$ admits an orthogonal  decomposition  \cite{CDZ}
\begin{equation}\label{decom of Hq}
  H_q=\bigoplus_{\psi\in\mathcal{P}_q} E_{q,\psi}.
\end{equation}
We give a  characterization of functions in $E_{q,\psi}$  which will be used in the sequel.

\begin{lem} \label{Epsi lemma}
 Suppose $f\in H_q$ and $\psi$ is a Dirichlet character mod $q_0$ in $\mathcal{P}_q$ (here $q_0\mid q$). Then the following statements are equivalent:
\begin{itemize}
  \item [(1)] $f\in E_{q,\psi}$;
   \item [(2)] $f(n)=f(\widehat{n})\psi(\frac{n}{\widehat{n}})$ for each $n\geq1$,
 where $\widehat{n}=\gcd(n,\frac{q}{q_0})$;
 \item [(3)]
 $(f\ast\mu\,\psi)(n)=0$ whenever $n\nmid \frac{q}{q_0}$;
 \item [(4)] $D_f(s)=P(s)L(s,\psi)$ for some Dirichlet polynomial $P$.
 \end{itemize}
\end{lem}

Before proving Lemma \ref{Epsi lemma}, we need an auxiliary lemma.
By the fact that the M\"{o}bius inversion of a multiplicative arithmetical function is also multiplicative, one has

\begin{lem} \label{muchi lemma}
  For $q\geq1$, let $\chi_q$ denote the principle Dirichlet character mod $q$. Then $$
        (\mu\ast\chi_q)(n)=\begin{cases}
                        \mu(n), &  n\mid q; \\
                        0, & n\nmid q.
                      \end{cases}
       $$
\end{lem}

\vskip2mm

\noindent\textbf{Proof of Lemma \ref{Epsi lemma}.}
 Let $\chi_m$ ($m\geq1$) denote the principle Dirichlet character mod $m$.
\vskip2mm
(1)$\Rightarrow$(2). It suffices to prove  the case $f=\xi_\chi$, where $\chi=\psi\chi_{\frac{q}{d}}$ with $d\mid \frac{q}{q_0}$, i.e.,  $\chi$ is the Dirichlet character mod $\frac{q}{d}$ induced by $\psi$.

Given $n\geq1$, we want to show
$\xi_\chi(n)=\xi_\chi(\widehat{n})\psi(\frac{n}{\widehat{n}})$,
 where $\widehat{n}=\gcd(n,\frac{q}{q_0})$.
If $d\nmid n$ then $\xi_\chi(n)=\xi_\chi(\widehat{n})=0$. So we may assume $d\mid n$, which immediately implies $d\mid \widehat{n}$ since
$d\mid\frac{q}{q_0}$. Therefore, we have $$\xi_\chi(n)=\chi(\frac{n}{d})=\psi(\frac{n}{d})
\chi_{\frac{q}{d}}(\frac{n}{d}),$$ $$\xi_\chi(\widehat{n})\psi(\frac{n}{\widehat{n}})
=\chi(\frac{\widehat{n}}{d})\psi(\frac{n}{\widehat{n}})=\psi(\frac{\widehat{n}}{d})
\chi_{\frac{q}{d}}(\frac{\widehat{n}}{d})\psi(\frac{n}{\widehat{n}})
=\psi(\frac{n}{d})\chi_{\frac{q}{d}}(\frac{\widehat{n}}{d}).$$
It remains to show $\chi_{\frac{q}{d}}(\frac{n}{d})
=\chi_{\frac{q}{d}}(\frac{\widehat{n}}{d})$ provided $\gcd(\frac{n}{d},q_0)=1$. If $\gcd(\frac{n}{d},q_0)=1$ then
$$\gcd(\frac{n}{d},\frac{q}{d})=\gcd(\frac{n}{d},\frac{q}{dq_0})
=\frac{1}{d}\gcd(n,\frac{q}{q_0})=\frac{\widehat{n}}{d},
$$
which gives that $\gcd(\frac{n}{d},\frac{q}{d})=1$ if and only if
$\gcd(\frac{\widehat{n}}{d},\frac{q}{d})=\frac{\widehat{n}}{d}=1$.
This completes the proof.

\vskip2mm

(2)$\Rightarrow$(3).
To reach a contradiction, we assume conversely that $(f\ast\mu\,\psi)(n)\neq0$ for some  $n\geq1$ with  $n\nmid \frac{q}{q_0}$. From the assumption in (2), we see that \begin{equation}\label{fmupsi}
(f\ast\mu\,\psi)(n)=\sum_{k\mid n}f(k)\mu(\frac{n}{k})\psi(\frac{n}{k})=\sum_{k\mid n}f(\widehat{k})\psi(\frac{k}{\widehat{k}})\mu(\frac{n}{k})\psi(\frac{n}{k}).
\end{equation}
Note that $\widehat{k}$  runs over all divisors of $\widehat{n}$, and $\widehat{k}=d$ for some divisor $d$ of $\widehat{n}$ if and only if
$k=ld$ with $\gcd(l,r_d)=1$, where $r_d=\frac{q}{dq_0}$.
By (\ref{fmupsi}), we have
\begin{equation*}
  \begin{split}
     (f\ast\mu\,\psi)(n) & =\sum_{d\mid\widehat{n}}f(d)\sum_{\substack{l\mid\frac{n}{d}  \\ (l,r_d)=1}}
     \psi(l)\mu(\frac{n}{ld})\psi(\frac{n}{ld}) \\
       & =\sum_{d\mid\widehat{n}}f(d)\psi(\frac{n}{d})
       \sum_{l\mid\frac{n}{d}}\mu(\frac{n}{ld})\chi_{r_d}(l) \\
       & =\sum_{d\mid\widehat{n}}f(d)\psi(\frac{n}{d})
       (\mu\ast\chi_{r_d})(\frac{n}{d}).
  \end{split}
\end{equation*}
This together with Lemma \ref{muchi lemma} implies $n\mid \frac{q}{q_0}$.

%
%

\vskip2mm
(3)$\Rightarrow$(4). Recall that $L(s,\psi)$  has no zeros in $\mathbb{C}_1$ and
  $\frac{1}{L(s,\psi)}=D_{\mu\,\psi}(s)$ on $\mathbb{C}_1$.
  Hence $$P(s)=\frac{D_f(s)}{L(s,\psi)}=D_f(s)D_{\mu\,\psi}(s)
  =D_{f\ast\mu\,\psi}(s)
  =\sum_{d\mid\frac{q}{q_0}}\frac{(f\ast\mu\,\psi)(d)}{d^s}.$$
In particular, $P$ is a Dirichlet polynomial.
\vskip2mm

(4)$\Rightarrow$(1). Assume that $D_f(s)=P(s)L(s,\psi)$ for some Dirichlet polynomial $P$.
By (\ref{decom of Hq}),  $f$ can be decomposed as $f=\sum_{\chi\in\mathcal{P}_q}f_\chi$, where $f_\chi\in E_{q,\chi}$. We have shown  that for each $\chi\in\mathcal{P}_q$,
there exist a Dirichlet polynomial $P_\chi$, such that $D_{f_\chi}(s)=P_\chi(s)L(s,\chi)$, which gives $$P(s)L(s,\psi)=D_f(s)=\sum_{\chi\in\mathcal{P}_q}D_{f_\chi}(s)
=\sum_{\chi\in\mathcal{P}_q}P_\chi(s)L(s,\chi).$$
Since the Dirichlet $L$-functions of primitive Dirichlet characters are linearly independent over the Dirichlet polynomials    \cite[Lemma 8.1]{KP}, it follows that $P_\psi=P$ and $P_\chi=0$ for any  $\chi\in\mathcal{P}_q$ other than $\psi$.
Then we have $$f=f_\psi\in E_{q,\psi}.$$
$\hfill \square $
\vskip2mm

Combining Lemma \ref{tau lemma} with Lemma \ref{Epsi lemma}, we immediately see the following.

\begin{lem} \label{tau Epsi}
  Suppose $\psi\in\mathcal{P}_q$. Then $\tau(\cdot,\chi)\in E_{q,\bar{\psi}}$, where $\chi$ is the Dirichlet character mod $q$ induced by $\psi$.
\end{lem}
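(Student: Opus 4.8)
The plan is to verify the membership $\tau(\cdot,\chi)\in E_{q,\bar{\psi}}$ through the equivalent description in Lemma~\ref{Epsi lemma}(2), feeding in the closed-form evaluation of the Gauss sum from Lemma~\ref{tau lemma}. Two preliminary remarks set the stage. First, for each fixed $\chi$ the function $n\mapsto\tau(n,\chi)$ is $q$-periodic, since $e^{2\pi i m(n+q)/q}=e^{2\pi imn/q}$; hence $\tau(\cdot,\chi)\in H_q$ and it makes sense to ask in which summand of the decomposition (\ref{decom of Hq}) it lies. Second, writing $q_0$ for the modulus of $\psi$, the conjugate $\bar{\psi}$ is again a primitive character mod $q_0$, so $\bar{\psi}\in\mathcal{P}_q$ and the space $E_{q,\bar{\psi}}$ is defined; moreover the integer $\widehat{n}=\gcd(n,\frac{q}{q_0})$ appearing in Lemma~\ref{tau lemma} is exactly the one occurring in condition~(2) of Lemma~\ref{Epsi lemma} for $E_{q,\bar{\psi}}$, since $\bar{\psi}$ has the same modulus $q_0$.

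Next I would record the formula of Lemma~\ref{tau lemma},
\[
\tau(n,\chi)=\frac{\phi(q)}{\phi(\frac{q}{\widehat{n}})}\,
\mu(\tfrac{q}{\widehat{n}q_0})\,\psi(\tfrac{q}{\widehat{n}q_0})\,
\overline{\psi(\tfrac{n}{\widehat{n}})}\,\tau(\psi),
\]
and isolate the single factor that depends on $n$ beyond $\widehat{n}$. The crux is the elementary observation that $\widehat{n}=\gcd(n,\frac{q}{q_0})$ divides $\frac{q}{q_0}$, whence $\gcd(\widehat{n},\frac{q}{q_0})=\widehat{n}$; that is, applying the hat operation to $\widehat{n}$ returns $\widehat{n}$ itself. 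Consequently every factor in the displayed formula except $\overline{\psi(\frac{n}{\widehat{n}})}$ depends on $n$ only through $\widehat{n}$, and evaluating the same formula at $\widehat{n}$ in place of $n$ (where the last factor becomes $\overline{\psi(1)}=1$) gives
\[
\tau(\widehat{n},\chi)=\frac{\phi(q)}{\phi(\frac{q}{\widehat{n}})}\,
\mu(\tfrac{q}{\widehat{n}q_0})\,\psi(\tfrac{q}{\widehat{n}q_0})\,\tau(\psi).
\]

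Comparing the two displays yields $\tau(n,\chi)=\tau(\widehat{n},\chi)\,\overline{\psi(\frac{n}{\widehat{n}})}=\tau(\widehat{n},\chi)\,\bar{\psi}(\frac{n}{\widehat{n}})$ for every $n\geq1$, which is precisely condition~(2) of Lemma~\ref{Epsi lemma} for the primitive character $\bar{\psi}$ of modulus $q_0$. By the equivalence (1)$\Leftrightarrow$(2) there, this places $\tau(\cdot,\chi)$ in $E_{q,\bar{\psi}}$, as claimed. I expect no genuine obstacle here: the argument is a direct substitution, and the only point requiring care is bookkeeping---keeping the modulus $q_0$ of $\psi$ and $\bar{\psi}$ aligned, confirming that the \emph{conjugate} appears (so that the target space is $E_{q,\bar{\psi}}$ rather than $E_{q,\psi}$), and verifying that the hat of $\widehat{n}$ is $\widehat{n}$ so that the prefactors genuinely cancel.
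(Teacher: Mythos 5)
Your proof is correct and follows essentially the same route as the paper: both verify the membership by checking condition (2) of Lemma \ref{Epsi lemma} via the Gauss sum evaluation in Lemma \ref{tau lemma}. You have simply made explicit the details the paper leaves as "one can easily check" (the $q$-periodicity, the primitivity of $\bar{\psi}$, and the observation that $\gcd(\widehat{n},\frac{q}{q_0})=\widehat{n}$ so the prefactors depend on $n$ only through $\widehat{n}$).
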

\begin{proof}
  It is clear that $\tau(\cdot,\chi)\in H_q$ by the definition of the Gauss sum. Using Lemma \ref{tau lemma}, one can easily check that $\tau(n,\chi)=\tau(\widehat{n},\chi)
  \overline{\psi(\frac{n}{\widehat{n}})}$ for each $n\geq1$,
  where $\widehat{n}=\gcd(n,\frac{q}{q_0})$.
\end{proof}

Finally, we introduce some duality between the  subspaces $ E_{q,\psi}$ and $E_{q,\bar{\psi}}$.
For each $\eta\in H_q$, one can define a linear transform $\Gamma_\eta$ on $H_q$ by putting
$$(\Gamma_\eta f)(m)=\sum_{n=1}^{q}\eta(mn)f(n),\quad f\in H_q, \,\, n=1,2,\cdots.$$
Then  $\Gamma_\eta(E_{q,\psi})\subseteq {E_{q,\bar{\psi}}}$ for any $\psi\in\mathcal{P}_q$ \cite[Corollary 1]{CDZ}.
When $\eta$ is the exponential function $\eta(n)=\frac{1}{q}e^{-2\pi i n/q}$ ($n\geq1$), the transform $\mathcal{T}:=\Gamma_\eta$ coincides with the Fourier transform  on the group $\mathbb{Z}/q\mathbb{Z}$ (here functions in $H_q$ are naturally identified with functions on  $\mathbb{Z}/q\mathbb{Z}$).
Since the Fourier transform $\mathcal{T}$ is invertible and $\dim E_{q,\psi}=\dim E_{q,\bar{\psi}}$, we further conclude the following result.
\begin{lem} \label{Fourier transform}
 If $f\in H_q$ and $\psi\in\mathcal{P}_q$, then $f\in E_{q,\psi}$ if and only if $\mathcal{T}f\in E_{q,\bar{\psi}}$.
\end{lem}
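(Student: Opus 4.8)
The plan is to upgrade the one-sided inclusion that is already recorded just before the statement into an equality, and then read off the biconditional from invertibility of $\mathcal{T}$. Concretely, the preceding paragraph supplies (via \cite[Corollary 1]{CDZ}) the inclusion $\mathcal{T}(E_{q,\psi})\subseteq E_{q,\bar{\psi}}$ for every $\psi\in\mathcal{P}_q$, together with the facts that $\mathcal{T}$ is invertible on $H_q$ and that $\dim E_{q,\psi}=\dim E_{q,\bar{\psi}}$. My goal is to show $\mathcal{T}(E_{q,\psi})=E_{q,\bar{\psi}}$, since once this equality is in hand the lemma follows in one line.

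First I would record the dimension count. Because $\psi$ is primitive of modulus $q_0$ with $q_0\mid q$, its conjugate $\bar{\psi}$ is again a primitive character of the \emph{same} modulus $q_0$, so $\bar{\psi}\in\mathcal{P}_q$; by the definition of $E_{q,\psi}$ as the span of the orthogonal family $\{\xi_{\psi_{q/d}}:d\mid \tfrac{q}{q_0}\}$, both $E_{q,\psi}$ and $E_{q,\bar{\psi}}$ have dimension equal to the number of divisors of $\tfrac{q}{q_0}$, giving $\dim E_{q,\psi}=\dim E_{q,\bar{\psi}}$. Next, since $\mathcal{T}$ is invertible on $H_q$ it is in particular injective on the finite-dimensional subspace $E_{q,\psi}$, so $\dim \mathcal{T}(E_{q,\psi})=\dim E_{q,\psi}=\dim E_{q,\bar{\psi}}$. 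A subspace contained in a space of equal finite dimension must coincide with it, whence $\mathcal{T}(E_{q,\psi})=E_{q,\bar{\psi}}$.

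With the equality established, I would conclude as follows. Applying $\mathcal{T}^{-1}$ to $\mathcal{T}(E_{q,\psi})=E_{q,\bar{\psi}}$ yields $\mathcal{T}^{-1}(E_{q,\bar{\psi}})=E_{q,\psi}$, so for $f\in H_q$ one has $f\in E_{q,\psi}$ if and only if $\mathcal{T}f\in E_{q,\bar{\psi}}$, which is exactly the assertion. (Equivalently, one may argue through the orthogonal decomposition $H_q=\bigoplus_{\chi\in\mathcal{P}_q}E_{q,\chi}$: writing $f=\sum_\chi f_\chi$ and using $\mathcal{T}f_\chi\in E_{q,\bar{\chi}}$ together with the directness of $\bigoplus_\chi E_{q,\bar{\chi}}$, the condition $\mathcal{T}f\in E_{q,\bar{\psi}}$ forces $\mathcal{T}f_\chi=0$, hence $f_\chi=0$, for all $\chi\neq\psi$.)

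I do not expect a genuine obstacle here: all the substantive input—the inclusion $\mathcal{T}(E_{q,\psi})\subseteq E_{q,\bar{\psi}}$ and the invertibility of the finite Fourier transform—has been imported, and the lemma is a short formal consequence. The only points that warrant a careful sentence are the observations that $\bar{\psi}$ is primitive of the same modulus (so that $E_{q,\bar{\psi}}$ is a legitimate summand of the same size) and that injectivity on a finite-dimensional subspace preserves dimension; both are routine, so the proof will be brief.
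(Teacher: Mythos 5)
Your proof is correct and is essentially the paper's own argument: the paper deduces the lemma in a single sentence from exactly the three facts you combine, namely the inclusion $\mathcal{T}(E_{q,\psi})\subseteq E_{q,\bar{\psi}}$ from \cite[Corollary 1]{CDZ}, the invertibility of $\mathcal{T}$, and the dimension count $\dim E_{q,\psi}=\dim E_{q,\bar{\psi}}$. Your write-up merely makes explicit the routine steps (injectivity preserving dimension, and a subspace of equal finite dimension coinciding with its ambient space) that the paper leaves to the reader.
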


\subsection{The Beurling-Wintner transform and some preparatory
results on zeros of functions}

Let us first recall     the  Beurling-Wintner transform.    Suppose that $\varphi\in L^2$ has its  Fourier-sine expansion $$\varphi(x)=\sum_{n=1}^{\infty}a_n\sqrt{2}\sin (n\pi x),\quad 0<x<1.$$
The Beurling-Wintner transform $\mathcal{D}\varphi$ of $\varphi \in L^2$ is the Dirichlet series
\begin{equation}\label{bw-trans}
\mathcal{D}\varphi(s):=\sum_{n=1}^{\infty}a_n n^{-s}.
\end{equation}
This transform $\mathcal{D}$ has the following property,
\begin{equation}\label{bw-mul}
\mathcal{D}(\sum_{k=1}^{K}c_k\varphi(kx))
=(\sum_{k=1}^{K}c_kk^{-s})\cdot\mathcal{D}\varphi.
\end{equation}
That is to say, a linear combination of  dilations of $\varphi$ is
   transformed into  multiplication of $\mathcal{D}\varphi$ by a Dirichlet polynomial.

 In fact, the B-W transform $\mathcal{D}$ is a unitary transform  from  $L^2$ onto
the Hardy  space of Dirichlet series
$$\mathcal{H}^2:=
\{D=\sum_{n=1}^{\infty}a_nn^{-s}:\|D\|^2=\sum_{n=1}^{\infty}|a_n|^2<\infty\},$$
which is introduced in \cite{HLS}.

Writing a complex variable $s=\sigma + it$,  for each Dirichlet series $D$, there exists a unique  $\sigma_a(D)\in[-\infty,+\infty]$ (called  the abscissa of absolute convergence of $D$) such that  if  $\sigma> \sigma_a(D)$,  the  series $D$ converges absolutely,  but not if  $\sigma< \sigma_a(D)$.  By the Cauchy-Schwarz inequality, for  each $D\in\mathcal{H}^2$ one has
$\sigma_a(D)\leq\frac{1}{2}$. Thus $D$ defines a holomorphic function in $\mathbb{C}_{\frac{1}{2}}$.
It was shown in \cite{HLS} that the set $\mathcal{H}^\infty$ of
Dirichlet series that can be extended to bounded holomorphic functions on $\mathbb{C}_0$  coincides with the multiplier algebra
of $\mathcal{H}^2$.

The Bohr transform $\mathcal{B}$ appeared much earlier. Let  $p_j\ (j\in\mathbb{N})$ be the $j$-th prime number. In 1913, Bohr noticed that
the terms $p_1^{-s},p_2^{-s},\cdots$ in Dirichlet series possess some kind of independence \cite{Bo}.
By the variable substitution
 $$z_1=p_1^{-s}, z_2=p_2^{-s}, \cdots,$$ a Dirichlet series $D$  is transformed into
 a power series $\mathcal{B}D$ in infinitely many variables.
To be more specific, for any $n\geq1$ let $n=p_1^{\alpha_1}\cdots p_l^{\alpha_l}$ be its prime factorization. Then one can define a finitely supported sequence $\alpha(n)$ for each natural number $n$ by putting
$$\alpha(n)=(\alpha_1,\cdots,\alpha_l,0,0,\cdots).$$
Denote the monomial $z_1^{\alpha_1}\cdots z_l^{\alpha_l}$ by $\mathbf{z}^{\alpha(n)}$. The Bohr transform of a Dirichlet series 
is defined as follows,
\begin{equation}\label{bohr-trans}
\mathcal{B}(\sum_{n=1}^{\infty}a_nn^{-s})
=\sum_{n=1}^{\infty}a_n\mathbf{z}^{\alpha(n)}.
\end{equation}

Set $\mathcal{F}=\mathcal{B}\mathcal{D}$ and let $\mathbb{D}_2^\infty$ denote  Hilbert's multidisk
$$\mathbb{D}_2^\infty=\{\mathbf{z}=(z_1,z_2,\cdots)\in l^2:|z_j|<1\text{ for each }j\in\mathbb{N}\}.$$
Let $\varphi\in L^2$, and  $\varphi(x)=\sum_{n=1}^{\infty}a_n\sqrt{2}\sin (n\pi x) $ be its  Fourier-sine expansion. Then by the Cauchy-Schwarz inequality,  the power series $\mathcal{F}\varphi=\sum_{n=1}^{\infty}a_n\mathbf{z}^{\alpha(n)}$ converges pointwise in $\mathbb{D}_2^\infty$.

The Hardy space
$H^2(\mathbb{D}_2^\infty)$ over the infinite polydisk $\mathbb{D}_2^\infty$, defined as
$$H^2(\mathbb{D}_2^\infty):=\{F=\sum_{n=1}^{\infty}a_n\mathbf{z}^{\alpha(n)}:
\|F\|^2=\sum_{n=1}^{\infty}|a_n|^2<\infty\},$$ is an analytic function space on $\mathbb{D}_2^\infty$ \cite{Ni1,Ni5}.
The multiplier algebra $H^\infty(\mathbb{D}_2^\infty)$  of $H^2(\mathbb{D}_2^\infty)$ is exactly the set of bounded holomorphic functions on $\mathbb{D}_2^\infty$ \cite{Ni1}.

The Bohr transform $\mathcal{B}$,  restricted on $\mathcal{H}^2$,  is a unitary transform from
$\mathcal{H}^2$ onto $H^2(\mathbb{D}_2^\infty)$ which has the following properties \cite{HLS}:
\begin{itemize}
  \item [(1)] if $f$ is a multiplier of $\mathcal{H}^2$, then $\mathcal{B}f $ is a  multiplier of $H^2(\mathbb{D}_2^\infty)$, and $\mathcal{B}(fh)=\mathcal{B}f\mathcal{B}h, \,\, h\in \mathcal{H}^2$;
    \item [(2)] the Bohr transform $\mathcal{B}$  establishes an  isometric isomorphism from  the Banach algebra $\mathcal{H}^\infty$ onto the Banach algebra $H^\infty(\mathbb{D}_2^\infty)$.
 \end{itemize}

Following \cite{HLS, Ni1},
we say that a Dirichlet series $D\in\mathcal{H}^2$ is cyclic if the multiplier invariant subspace  generated by $D$ is the whole space $\mathcal{H}^2$, and correspondingly, a function $F\in H^2(\mathbb{D}_2^\infty)$ is cyclic if the multiplier invariant subspace  generated by $F$ is the whole space $H^2(\mathbb{D}_2^\infty)$.

The following result, coming from \cite{HLS} (also see \cite{Ni1,Ni5}), translates the B-W problem into
the  cyclic vector problem in $\mathcal{H}^2$ or $H^2(\mathbb{D}_2^\infty)$, respectively.

\begin{prop} \label{cyclic}
  Suppose $\varphi\in L^2$. Then the following statements are equivalent:
\begin{itemize}
  \item [(1)] the \text{p.d.s.} $\{\varphi(kx):k\in\mathbb{N}\}$ of $\varphi$ is complete;
  \item [(2)] $\mathcal{D}\varphi$ is cyclic in $\mathcal{H}^2$;
 \item [(3)]
 $\mathcal{F}\varphi$ is cyclic in $H^2(\mathbb{D}_2^\infty)$.
 \end{itemize}
\end{prop}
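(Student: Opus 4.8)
The plan is to move everything through the two unitaries $\mathcal{D}$ and $\mathcal{B}$, so that the analytic content of the proposition collapses to a single approximation fact and all three conditions become the statement that one concrete subspace fills the ambient Hilbert space. I would first treat (1)$\Leftrightarrow$(2). Since $\mathcal{D}$ is unitary and, specializing \eqref{bw-mul} to a single dilation, $\mathcal{D}(\varphi(k\cdot))=k^{-s}\mathcal{D}\varphi$, the image under $\mathcal{D}$ of $\overline{\operatorname{span}}\{\varphi(k\cdot):k\in\mathbb{N}\}$ is exactly $V:=\overline{\operatorname{span}}\{k^{-s}D:k\in\mathbb{N}\}$, where $D=\mathcal{D}\varphi$. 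Thus the p.d.s. is complete iff $V=\mathcal{H}^2$, and it remains to identify $V$ with the multiplier-invariant subspace $[D]$ generated by $D$. The inclusion $V\subseteq[D]$ is immediate, because $|k^{-s}|=k^{-\sigma}\le 1$ on $\mathbb{C}_0$ shows each $k^{-s}\in\mathcal{H}^\infty$; taking $k=1$ also gives $D\in V$. Since $k^{-s}\cdot j^{-s}D=(kj)^{-s}D\in V$, the space $V$ is invariant under multiplication by every Dirichlet polynomial, so the reverse inclusion $[D]\subseteq V$ will follow once I know that such polynomial multiples are $\mathcal{H}^2$-dense among all multiplier multiples $\{MD:M\in\mathcal{H}^\infty\}$.

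That density is the crux, and I expect it to be the main obstacle. I would establish it on the Bohr side, where $k^{-s}$ becomes the monomial $\mathbf{z}^{\alpha(k)}$ and the assertion becomes: for $M\in H^\infty(\mathbb{D}_2^\infty)$ and $G\in H^2(\mathbb{D}_2^\infty)$, the product $MG$ is the $H^2$-limit of $P_nG$ for suitable polynomials $P_n$. Working with boundary values on the infinite torus (the Haar-measure realization of $H^2(\mathbb{D}_2^\infty)$), I would build $P_n$ by truncating $M$ to its first $N$ variables and applying Fej\'er (Ces\`aro) means of degree $d$ in those variables; these polynomials satisfy $\|P_{N,d}\|_\infty\le\|M\|_{H^\infty}$ and converge to $M$ a.e. as $N,d\to\infty$. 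A dominated-convergence estimate, with dominating function $(2\|M\|_{H^\infty})^2|G|^2$, then yields $P_{N,d}G\to MG$ in $H^2$. Pulling this back through $\mathcal{B}^{-1}$ gives the desired $\mathcal{H}^2$-approximation, hence $[D]\subseteq V$ and $V=[D]$; this completes (1)$\Leftrightarrow$(2).

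Finally, (2)$\Leftrightarrow$(3) is a formal transport. The map $\mathcal{B}$ is a unitary from $\mathcal{H}^2$ onto $H^2(\mathbb{D}_2^\infty)$ which, by its properties (1)--(2) recalled above, is simultaneously an isometric isomorphism $\mathcal{H}^\infty\to H^\infty(\mathbb{D}_2^\infty)$ with $\mathcal{B}(Mh)=(\mathcal{B}M)(\mathcal{B}h)$. Applying $\mathcal{B}$ to the generating set and using that it is onto $H^\infty(\mathbb{D}_2^\infty)$, I get $\mathcal{B}\{M\mathcal{D}\varphi:M\in\mathcal{H}^\infty\}=\{N\,\mathcal{F}\varphi:N\in H^\infty(\mathbb{D}_2^\infty)\}$; since a unitary preserves closures, $\mathcal{B}[\mathcal{D}\varphi]=[\mathcal{F}\varphi]$. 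Hence $[\mathcal{D}\varphi]=\mathcal{H}^2$ exactly when $[\mathcal{F}\varphi]=H^2(\mathbb{D}_2^\infty)$, which is (2)$\Leftrightarrow$(3). The only point requiring care beyond the approximation step is the verification that $\mathcal{B}$ really carries the $\mathcal{H}^\infty$-generating set onto the $H^\infty(\mathbb{D}_2^\infty)$-generating set, and this is exactly what the surjectivity and multiplicativity of $\mathcal{B}$ supply.
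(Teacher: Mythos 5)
Your proposal is correct, but note that the paper itself contains no proof of Proposition~\ref{cyclic}: it is quoted from \cite{HLS} (see also \cite{Ni1,Ni5}), so the comparison is with the argument in those references rather than with anything proved in this paper. Your route is essentially that standard argument: unitarity of $\mathcal{D}$ and $\mathcal{B}$, together with the multiplicative properties recalled in the Introduction, reduce all three statements to the single nontrivial fact that Dirichlet-polynomial multiples of a fixed element are norm-dense among its $\mathcal{H}^\infty$-multiples (equivalently, on the Bohr side, polynomial multiples are dense among $H^\infty(\mathbb{D}_2^\infty)$-multiples of a fixed $G$), which you prove by Fej\'er means of finite-variable truncations with the uniform bound $\|P_{N,d}\|_\infty\le\|M\|_{H^\infty}$. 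Two points deserve slightly more care than your sketch gives them, though both are routine: (i) the dominating function $(2\|M\|_{H^\infty})^2|G|^2$ presupposes that the boundary function on $\mathbb{T}^\infty$ of a bounded analytic function on $\mathbb{D}_2^\infty$ is essentially bounded by its sup norm, which requires the identification $H^2(\mathbb{D}_2^\infty)\cong H^2(\mathbb{T}^\infty)$ and, for instance, a truncation (martingale) argument; (ii) almost everywhere convergence of the doubly indexed family $P_{N,d}$ ``as $N,d\to\infty$'' is not an iterated-limit statement, so you should either choose $d=d(N)$ using convergence in measure or pass to subsequences before invoking dominated convergence. If you prefer to avoid boundary values altogether, observe that $H^2(\mathbb{D}_2^\infty)$ is a space of analytic functions with bounded point evaluations: since $\|P_{N,d}G\|\le\|M\|_{H^\infty}\|G\|$ and $P_{N,d}(\mathbf{z})G(\mathbf{z})\to M(\mathbf{z})G(\mathbf{z})$ pointwise on $\mathbb{D}_2^\infty$, the products converge weakly to $MG$, and a norm-closed subspace is weakly closed; this yields the same density statement with less machinery.
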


Beurling first gave
a necessary condition for $\varphi$ to be in $\mathcal{C}$, which is
the following \cite{Beu} (also see  \cite[Corollary 6.6.3]{Ni5}).
\begin{lem} \label{Beurling}
 If $\varphi\in \mathcal{C}$, then $\mathcal{F}\varphi$ has no zero in $\mathbb{D}_2^\infty$.
\end{lem}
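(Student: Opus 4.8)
The plan is to prove that if $\varphi\in\mathcal{C}$, then $\mathcal{F}\varphi$ has no zeros in $\mathbb{D}_2^\infty$. The natural strategy is a contrapositive cyclicity argument using the multiplier structure of $H^2(\mathbb{D}_2^\infty)$ together with Proposition \ref{cyclic}. By Proposition \ref{cyclic}, $\varphi\in\mathcal{C}$ is equivalent to $\mathcal{F}\varphi$ being cyclic in $H^2(\mathbb{D}_2^\infty)$, that is, the smallest closed $H^\infty(\mathbb{D}_2^\infty)$-invariant subspace containing $\mathcal{F}\varphi$ is the whole space. Suppose toward a contradiction that $\mathcal{F}\varphi$ vanishes at some point $\mathbf{w}=(w_1,w_2,\dots)\in\mathbb{D}_2^\infty$. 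The idea is to show that point evaluation at $\mathbf{w}$ is a bounded functional on $H^2(\mathbb{D}_2^\infty)$ that is compatible with multiplication, and hence every element of the cyclic subspace generated by $\mathcal{F}\varphi$ must also vanish at $\mathbf{w}$, contradicting cyclicity.

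First I would verify that evaluation at a fixed $\mathbf{w}\in\mathbb{D}_2^\infty$ is a bounded linear functional on $H^2(\mathbb{D}_2^\infty)$; this follows because $\mathbf{w}\in l^2$ with each $|w_j|<1$ guarantees, via Cauchy--Schwarz on $\sum_n a_n\mathbf{z}^{\alpha(n)}$, that $|F(\mathbf{w})|\leq \|F\|\cdot C(\mathbf{w})$ for a finite reproducing-kernel constant $C(\mathbf{w})$, so $\mathbf{w}$ is a point of bounded evaluation (indeed $H^2(\mathbb{D}_2^\infty)$ is a reproducing kernel Hilbert space on $\mathbb{D}_2^\infty$). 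Next, the key algebraic step is that for any multiplier $m\in H^\infty(\mathbb{D}_2^\infty)$ one has $(m\,F)(\mathbf{w})=m(\mathbf{w})F(\mathbf{w})$, which is immediate from property (1) of the Bohr transform ($\mathcal{B}(fh)=\mathcal{B}f\,\mathcal{B}h$) and the pointwise interpretation of multiplication on $\mathbb{D}_2^\infty$. Consequently, if $\mathcal{F}\varphi(\mathbf{w})=0$, then $(m\,\mathcal{F}\varphi)(\mathbf{w})=m(\mathbf{w})\cdot 0=0$ for every multiplier $m$, so the entire multiplier orbit $\{m\,\mathcal{F}\varphi: m\in H^\infty(\mathbb{D}_2^\infty)\}$ lies in the kernel of the evaluation functional.

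Then I would pass to the closure: since evaluation at $\mathbf{w}$ is continuous on $H^2(\mathbb{D}_2^\infty)$, the closed invariant subspace $[\mathcal{F}\varphi]$ generated by $\mathcal{F}\varphi$ is contained in the closed hyperplane $\{F: F(\mathbf{w})=0\}$, which is a proper subspace (it omits, for instance, the constant function $1$, whose value at $\mathbf{w}$ is $1\neq0$). Therefore $[\mathcal{F}\varphi]\neq H^2(\mathbb{D}_2^\infty)$, so $\mathcal{F}\varphi$ is not cyclic, and by Proposition \ref{cyclic} we conclude $\varphi\notin\mathcal{C}$. Taking the contrapositive gives the claim: $\varphi\in\mathcal{C}$ forces $\mathcal{F}\varphi$ to be zero-free on $\mathbb{D}_2^\infty$.

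The main obstacle I anticipate is justifying rigorously that evaluation at an arbitrary $\mathbf{w}\in\mathbb{D}_2^\infty$ is bounded and that the multiplicativity $(mF)(\mathbf{w})=m(\mathbf{w})F(\mathbf{w})$ holds at the level of genuine pointwise values rather than merely formal power series. The subtlety is that $H^2(\mathbb{D}_2^\infty)$ consists of functions that need not extend holomorphically to all of $\mathbb{D}_2^\infty$ in a naive sense, so one must invoke the established fact (cited from \cite{Ni1,Ni5} in the excerpt) that elements of $H^2(\mathbb{D}_2^\infty)$ are genuine analytic functions on $\mathbb{D}_2^\infty$ and that $H^\infty(\mathbb{D}_2^\infty)$ is exactly the bounded holomorphic functions, so that the multiplier action agrees with honest pointwise multiplication. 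Once these facts are granted, the argument is a clean reproducing-kernel/invariant-subspace computation, and the only care needed is in bounding the kernel constant $C(\mathbf{w})$, which I would control by estimating $\sum_n |\mathbf{w}^{\alpha(n)}|^2$ using $|w_j|<1$ and $\mathbf{w}\in l^2$.
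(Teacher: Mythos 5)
Your proof is correct, but note that the paper does not actually prove this lemma: it is quoted as Beurling's classical necessary condition, with the proof deferred to the references (\cite{Beu} and \cite[Corollary 6.6.3]{Ni5}). Your reproducing-kernel/cyclicity argument is the standard modern proof of exactly this statement, and it is complete modulo the same background facts the paper itself states with citations. Concretely: point evaluation at $\mathbf{w}\in\mathbb{D}_2^\infty$ is bounded because $\sum_{n}|\mathbf{w}^{\alpha(n)}|^2=\prod_{j}(1-|w_j|^2)^{-1}<\infty$, the product converging precisely since $\mathbf{w}\in l^2$ and each $|w_j|<1$; the multiplier algebra $H^\infty(\mathbb{D}_2^\infty)$ consists of bounded holomorphic functions acting by genuine pointwise multiplication, so the multiplier orbit of a function vanishing at $\mathbf{w}$ lies in the proper closed hyperplane $\{F:F(\mathbf{w})=0\}$ (proper since the constant $1$ lies in $H^2(\mathbb{D}_2^\infty)$ and does not vanish); hence the generated invariant subspace is proper and $\mathcal{F}\varphi$ is not cyclic, which by Proposition \ref{cyclic} contradicts $\varphi\in\mathcal{C}$. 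The two points you flag as delicate --- that elements of $H^2(\mathbb{D}_2^\infty)$ are honest analytic functions on $\mathbb{D}_2^\infty$ (via Cauchy--Schwarz) and that the multiplier action is pointwise --- are exactly the facts the paper records in its introduction from \cite{HLS,Ni1,Ni5}, so no gap remains.
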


We also record the following  lemma, which  is needed in this paper when we consider zeros of Dirichlet series.
Note that if $D$ is a Dirichlet series with $\sigma_a(D)\leq0$,
then $D(s+\delta)\in\mathcal{H}^\infty$ for each $\delta>0$, and thus $\mathcal{B}D$ is well-defined on
$$\{(p_1^{-\delta}z_1,p_2^{-\delta}z_2,\cdots): \text{for all} \,\delta>0, \text{and}\,
\mathbf{z}=(z_1,z_2,\cdots)\in\mathbb{D}_2^\infty\},$$
which contains   the set $\mathbb{D}_0^\infty$, where
$$\mathbb{D}_0^\infty=\{\mathbf{z}\in\mathbb{D}_2^\infty: \text{ $\mathbf{z}$  has  only   finitely many nonzero entries}\}.$$

\begin{lem} \label{zero and Bohr}
Let $D(s)=\sum_{n=1}^{\infty}a_nn^{-s}$ be a Dirichlet series with
$\sigma_a(D)\leq0$.
 Then $\mathcal{B}D$  has no zeros in $\mathbb{D}_0^\infty$ if and only if $a_1\neq0$ and $D$ has  no zero in    $\mathbb{C}_0$.
\end{lem}
\begin{proof}
Set $F=\mathcal{B}D$.
For the necessity, assume that $F$  has no zeros in $\mathbb{D}_0^\infty$.  Let $p_j\ (j\in\mathbb{N})$ denote the $j$-th prime, and
put $$D_j(s)=\sum_{n=1}^{\infty}a_n\rho_j(n)n^{-s},\,  j=1,2,\cdots,$$
where $\rho_j$ is a completely multiplicative arithmetical functions defined by putting $\rho_j(p_1)=\cdots=\rho_j(p_j)=\rho_j(1)=1$ and
$\rho_j(p_{j+1})=\rho_j(p_{j+2})=\cdots=0$.
Then for each $j\in\mathbb{N}$, $\sigma_a(D_j)\leq0$ and
$$D_j(s)=F(p_1^{-s},\cdots,p_j^{-s},0,0,\cdots),\quad s\in\mathbb{C}_0,$$
which yields that $D_j$ has no zeros in    $\mathbb{C}_0$.
Moreover, $\{D_j\}_{j\geq1}$ converges  uniformly to $D$  on each $\mathbb{C}_\sigma$ with $\sigma>0$. Combining $a_1=F(\mathbf{0})\neq0$ with Hurwitz’s theorem  shows that $D$ has no zeros in   $\mathbb{C}_0$.

Now suppose that $a_1\neq0$ and $D$ has no zeros in  $\mathbb{C}_0$.
  To reach a contradiction, assume conversely that $F$ has a zero $\mathbf{w}=(w_1,\cdots,w_N,0,0,\cdots)$ $(N\in\mathbb{N})$ in $\mathbb{D}_0^\infty$. Then one can take some $\delta>0$ such that $|w_j|< p_j^{-\delta}\ (1\leq j\leq N)$, which implies that
  $$\mathcal{B}(D(s+\delta))(\mathbf{z})
  =F(p_1^{-\delta}z_1,p_2^{-\delta}z_2,\cdots),\quad
  \mathbf{z}\in\mathbb{D}_2^\infty$$ also has a zero $\widetilde{\mathbf{w}}=(p_1^{\delta}w_1,\cdots,p_N^{\delta}w_N,0,0,\cdots)\in\mathbb{D}_0^\infty$.
  We conclude that $D(s+\delta)$ is  not invertible in the algebra $\mathcal{H}^\infty$, otherwise $\frac{1}{D(s+\delta)}\in\mathcal{H}^\infty$ and since
  $\mathcal{B}$ is multiplicative on $\mathcal{H}^\infty$,
  $$1
  =\mathcal{B}(D(s+\delta))(\widetilde{\mathbf{w}})
  \cdot\mathcal{B}(\frac{1}{D(s+\delta)})(\widetilde{\mathbf{w}})=0.$$
  Hence we can find a sequence $\{s_k\}_{k\geq1}$  in $\mathbb{C}_\delta$ satisfying  $D(s_k)\rightarrow0\ (k\rightarrow\infty)$.

 Since $D$ is zero-free and bounded on  $\mathbb{C}_{\frac{\delta}{2}}$, the functions
 $$h_k(s)=D(s+s_k-\delta),\quad k=1,2,\cdots$$
 constitute a  normal family on $\mathbb{C}_{\frac{\delta}{2}}$.
 Then one can take a subsequence $\{h_{k_i}\}_{i\geq1}$ converging to a holomorphic function $h$ uniformly  on compact subsets of $\mathbb{C}_{\frac{\delta}{2}}$. Since   each $h_k$ is zero-free and $$h(\delta)=\lim_{i\rightarrow\infty}h_{k_i}(\delta)=0,$$ we have $h\equiv0$ by Hurwitz’s theorem.
 On the other hand, recall that a Dirichlet series converges uniformly to its constant term when $\mathrm{Re}\,s$ tends to $+\infty$, and hence we can choose $M$ sufficiently large so that $|D(s)|\geq\frac{|a_1|}{2}$  for $s\in\mathbb{C}_M$. This gives that $|h_k(s)|\geq\frac{|a_1|}{2}$  whenever $\mathrm{Re}\,s>M$, which contradicts with $h\equiv0$.
 This shows that $F=\mathcal{B}D$  has no zero in $\mathbb{D}_0^\infty$, completing the proof.
\end{proof}

The following result follows immediately.
\begin{lem} \label{D cyclic no zeors}
  If $D\in\mathcal{H}^2$ and $D$ is cyclic, then
  $D$ has a nonzero constant term and no zeros in $\mathbb{C}_\sigma$, where $\sigma=\max\{\sigma_a(D),0\}$.
\end{lem}
\begin{proof}
  Assume $D\in\mathcal{H}^2$ is cyclic. Then $D(s+\sigma)$ is also cyclic in $\mathcal{H}^2$, which follows from
 $$\inf_{\psi\in\mathcal{H}^\infty}\|\psi D_\sigma-1\|\leq\inf_{\psi\in\mathcal{H}^\infty}\| \psi_\sigma D_\sigma-1\|\leq\inf_{\psi\in\mathcal{H}^\infty}\| \psi D-1\|=0,$$
 where $D_\sigma(s)=D(s+\sigma), \, \psi_\sigma(s)=\psi(s+\sigma)$.    Combining Proposition
  \ref{cyclic} with Lemma \ref{Beurling}, we see that $\mathcal{B}D_\sigma$ has no zero in $\mathbb{D}_2^\infty$. Therefore, Lemma \ref{zero and Bohr}
  yields the desired conclusion.
\end{proof}

It was shown in \cite{NGN} that Beurling's necessary condition (Lemma \ref{Beurling}) is also sufficient for a finite linear combinations of $\sin(\pi x),\sin(2\pi x),\cdots$. One can use Lemma \ref{zero and Bohr} to get its Dirichlet series version.

\begin{lem} \label{NGN Dirichlet}
  Let $P$ be a Dirichlet polynomial. Then $P$ is cyclic in $\mathcal{H}^2$ if and only if $P$ has a nonzero constant term and no zero in $\mathbb{C}_0$.
\end{lem}

Below  we turn to functions in $H_q$.  In \cite{SW}, Saias and Weingartner considered  functions  that do not belong to any  $E_{q,\psi}$ (namely $f\notin\bigcup_{\psi\in\mathcal{P}_q}E_{q,\psi}$), and proved that the Dirichlet series $D_f$ always have zeros in $\mathbb{C}_1$.
Their result can be refined as follows (remark that the equivalence of (2) and (4) has been established in \cite{SW}) which will be used in the next section.

\begin{prop} \label{Hq no zero}
  Suppose $f\in H_q$. Then the following statements are equivalent:
\begin{itemize}
  \item [(1)] $D_f(s+1)$ is cyclic in $\mathcal{H}^2$;
  \item [(2)] $f(1)\neq0$ and $D_f$ has   no zero in $\mathbb{C}_1$;
 \item [(3)]  $f(1)\neq0$, $f\in E_{q,\psi}$ for some $\psi\in\mathcal{P}_q$, and the Dirichlet polynomial $$P(s)=\sum_{d\mid\frac{q}{q_0}}\frac{(f\ast\mu\,\psi)(d)}{d^s}$$
  has no zeros in $\mathbb{C}_1$;
     \item [(4)] $f(1)\neq0$ and $D_f(s)=P(s)L(s,\psi)$, where $\psi\in\mathcal{P}_q$ and $P$ is a Dirichlet polynomial that has  no zero in $\mathbb{C}_1$.
 \end{itemize}
\end{prop}
\begin{proof}
  The implication (1)$\Rightarrow$(2) has been established by Lemma \ref{D cyclic no zeors}. The implications (2)$\Rightarrow$(3)$\Rightarrow$(4) immediately follows from the result of Saias and Weingartner and Lemma \ref{Epsi lemma}.

  \noindent(4)$\Rightarrow$(1): Let $\chi_n\ (n\geq1)$ denote  the principle Dirichlet character mod $n$, and $M$  the  multiplier invariant subspace of $\mathcal{H}^2$ generated by $Q(s)L(s+1,\psi)$, where
$Q(s)=P(s+1)$. By Lemma \ref{NGN Dirichlet}, it suffices to show $Q\in M$.  To this end, we only need to prove  $Q(s)L(s+1,\psi\chi_{k!})\in M$ for each $k>q$ since
$L(s+1,\psi\chi_{k!})$ converges to the constant function $1$ in $\mathcal{H}^2$-norm
as $k\rightarrow\infty$.

For each $n\geq1$ put
$\rho_{n}=\mu\ast\chi_{n}$. Then using Lemma \ref{muchi lemma} we see that
$D_{\psi\rho_{n}}\ (n\geq1)$ is a Dirichlet polynomial, and hence $D_{\psi\rho_{n}}(s+1)\in\mathcal{H}^\infty$. Since for any $k>q$,
$$\psi\ast\psi\rho_{k!}=\psi\ast\mu\,\psi\ast\psi\chi_{k!}
=\psi\chi_{k!},$$
we have
$$Q(s)L(s+1,\psi\chi_{k!})=Q(s)L(s+1,\psi)D_{\psi\rho_{k!}}(s+1)\in M.$$
This completes the proof.
\end{proof}


%

\section{Some criteria}
In this section, we first give a preliminary criterion (Theorem \ref{precri} below) to determine when the characteristic function of an open subset of $(0,1)$ consisting of finitely many component intervals with rational endpoints generates complete periodic dilation system.
Then based on Theorem \ref{precri}, we establish new criteria (Theorems 3.9, 3.10 and 3.12) by virtue of information from boundary points of $V$. These criteria will be used in the subsequent sections.

\subsection{A preliminary criterion}
To state Theorem \ref{precri}, we need to establish some  notations, which will be used throughout the rest of this paper.

Let us recall the notations in Introduction.
Suppose $$0\leq\frac{s_1}{t_1}<\frac{s_1'}{t_1'}<\cdots<\frac{s_N}{t_N}<\frac{s_N'}{t_N'}\leq1,$$ where  $\frac{s_i}{t_i},\frac{s_i'}{t_i'}\ (i=1,2,\cdots,N)$ are irreducible fractions. Set $$V=(\frac{s_1}{t_1},\frac{s_1'}{t_1'})\cup\cdots
\cup(\frac{s_N}{t_N},\frac{s_N'}{t_N'}),\eqno{(\ref{Vdisplay})}$$
   $$t_V=\mathrm{lcm}(t_1, t_1', \cdots, t_N, t_N').\eqno{(\ref{tdisplay})}$$
Then we can rewrite $V$ as $$V=(\frac{\alpha_1}{t_V},\frac{\beta_1}{t_V})\cup\cdots
\cup(\frac{\alpha_N}{t_V},\frac{\beta_N}{t_V}).\eqno{(\ref{Vdisplay'})}
$$

Since $\{\sqrt{2}\sin(n\pi x):\, n\in \mathbb{N}\}$ is  a canonical  orthonormal
  basis of $L^2$, we have $$\mathbf{1}_V=2\sum_{n=1}^\infty\Big[\int_{0}^{1}\mathbf{1}_V\sin (n\pi x)\mathrm{d}x\,\Big]\,\sin (n\pi x).$$
  Set
  \begin{equation}\label{q}
                                                                  q=2t_V
                                                                \end{equation} and
\begin{equation}\label{f(n)}
f(n)=2\pi n\int_{0}^{1}\mathbf{1}_V\sin (n\pi x)\mathrm{d}x=2\pi n\int_{V}\sin (n\pi x)\mathrm{d}x,\quad n=1,2,\cdots.\end{equation}
 Then  by (\ref{Vdisplay'}),  \begin{equation}\label{cos-cos}
        f(n)=2\pi n\sum_{i=1}^{N}\int_{\frac{\alpha_i}{t_V}}^{\frac{\beta_i}{t_V}}\sin (n\pi x)\mathrm{d}x=2\sum_{i=1}^{N}\left(\cos\frac{n\pi\alpha_i}{t_V}-\cos\frac{n\pi\beta_i}{t_V}\right)
      \end{equation}
      In particular,
      \begin{equation}\label{f(1)}
        f(1)=2\sum_{i=1}^{N}\left(\cos\frac{\pi\alpha_i}{t_V}-\cos\frac{\pi\beta_i}{t_V}\right)>0
      \end{equation} since the function $\cos x$ is strictly decreasing on $[0,\pi]$.
It follows  from (\ref{cos-cos}) that  $f\in H_q$, i.e., $f(n+q)=f(n)$ for each $n\geq1$. An application of  the B-W transform to $$\mathbf{1}_V=\frac{1}{\pi}\sum_{n=1}^\infty\frac{f(n)}{n}\,\sin (n\pi x)$$ gives
\begin{equation} \label{B-W trans}
\mathcal{D}\mathbf{1}_V(s)=\frac{\sqrt{2}}{2\pi}\sum_{n=1}^{\infty} \frac{f(n)}{n^{s+1}}=\frac{\sqrt{2}}{2\pi}D_f(s+1).
\end{equation}
Recall that the Dirichlet series $D_f$ is defined by
  $$D_f(s)=\sum_{n=1}^\infty\frac{f(n)}{n^s}.$$
 It  is deduced from (\ref{B-W trans}) that the B-W transform $\mathcal{D}\mathbf{1}_V$  of $\mathbf{1}_V$ is a translation of $\frac{\sqrt{2}}{2\pi}D_f$.

 \begin{thm}\label{precri}
   Let $V,q$ and $f$ be given as in (\ref{Vdisplay}), (\ref{q}) and (\ref{f(n)}). Then the following statements are equivalent:
   \begin{itemize}
     \item [(1)] $\mathbf{1}_V\in\mathcal{C}$;
     \item [(2)] $\mathcal{D}\mathbf{1}_V$ has no zero in $\mathbb{C}_0$;
     \item [(3)] $\mathcal{F}\mathbf{1}_V$ has no zero in $\mathbb{D}_2^\infty$;
     \item [(4)] there exists some  $\psi\in\mathcal{P}_q$ such that $f\in E_{q,\psi}$ and  the Dirichlet polynomial $$P(s)=\sum_{d\mid\frac{q}{q_0}}\frac{(f\ast\mu\,\psi)(d)}{d^s}$$
  has no zero in $\mathbb{C}_1$, where $q_0$ is the modulus of $\psi$.
     \end{itemize}
 \end{thm}
 \begin{proof}
   Since $\mathcal{D}\mathbf{1}_V(s)=\frac{\sqrt{2}}{2\pi}D_f(s+1)$ for some $f\in H_q$ with $f(1)\neq0$ (see (\ref{B-W trans}) and (\ref{f(1)})), the equivalence of (1), (2) and (4) immediately follows from Proposition \ref{Hq no zero}.
  Moreover, the implication (1)$\Rightarrow$(3) is due to Beurling's necessary condition (Lemma \ref{Beurling}), and the implication (3)$\Rightarrow$(2) follows immediately from Lemma \ref{zero and Bohr}.
 \end{proof}

 \begin{rem}\label{uniquepsi}
 \begin{itemize}
 \item[(1)] When $\mathbf{1}_V\in\mathcal{C}$, the primitive Dirichlet character $\psi$ in Theorem \ref{precri} (4) is uniquely determined by the requirement $f\in E_{q,\psi}$. In fact, if $f\in E_{q_1,\psi_1}$ for some $q_1\in\mathbb{N}$ and some $\psi_1\in\mathcal{P}_{q_1}$, then it follows from Lemma \ref{Epsi lemma} that there exist Dirichlet polynomials $P$ and $P_1$ such that
 $$P_1(s)L(s,\psi_1)=D_f(s)=P(s)L(s,\psi).$$
 By  the fact that the Dirichlet $L$-functions of primitive Dirichlet characters are linearly independent over the Dirichlet polynomials  \cite[Lemma 8.1]{KP}, one  has $\psi_1=\psi$.
 \item[(2)]  Beurling's necessary condition (Lemma \ref{Beurling}) is also sufficient for the characteristic function $\mathbf{1}_V$ of an open subset $V$ of $(0,1)$ consisting of finitely many component intervals with rational endpoint.
 \end{itemize}
 \end{rem}

 Combining Theorem \ref{precri} with Remark \ref{uniquepsi} (1), one obtains

 \begin{cor}\label{corof1VC}
   Let $V,q$ and $f$ be given as in (\ref{Vdisplay}), (\ref{q}) and (\ref{f(n)}). If $f\in E_{q,\psi}$ for some $\psi\in\mathcal{P}_q$, then $\mathbf{1}_V\in\mathcal{C}$ if and only if
    the Dirichlet polynomial $$P(s)=\sum_{d\mid\frac{q}{q_0}}\frac{(f\ast\mu\,\psi)(d)}{d^s}$$
  has no zeros in $\mathbb{C}_1$, where $q_0$ is the modulus of $\psi$.
 \end{cor}

 Since $f(1)\neq0$ (see (\ref{f(1)})), we have the following direct consequence of Corollary \ref{corof1VC}.

\begin{cor}\label{q=q01}
   Let $V,q$ and $f$ be given as in (\ref{Vdisplay}), (\ref{q}) and (\ref{f(n)}). If $f\in E_{q,\psi}$ for some $\psi\in\mathcal{P}_q$, and the modulus of $\psi$ is $q$, then $\mathbf{1}_V\in\mathcal{C}$.
\end{cor}

We can obtain some examples of open sets $V$ with $\mathbf{1}_V\in\mathcal{C}$ by using Theorem \ref{precri}.

 \begin{exam}\label{01}
   For $V=(0,1)$, we have
   $$f(n)=2(\cos0-\cos(n\pi))=2-2\cos (n\pi)=4\chi_2(n),$$
   where $\chi_2$ is the Dirichlet character mod $2$. Therefore, $$\mathcal{D}\mathbf{1}_V(s)=\frac{\sqrt{2}}{2\pi}D_f(s+1)
   =\frac{\sqrt{2}}{\pi}L(s+1,\chi_2)$$
   has no zeros in $\mathbb{C}_0$, forcing $\mathbf{1}_V\in\mathcal{C}$.
 \end{exam}

\begin{exam}\label{Vt0Vt1} Set $$V_{t,0}=\begin{cases}
	(0,\frac{1}{t})\cup(\frac{2}{t},\frac{3}{t})\cup\cdots\cup(\frac{t-2}{t},\frac{t-1}{t}), & t=2,4,6,\cdots; \\
	(0,\frac{1}{t})\cup(\frac{1}{t},\frac{2}{t})\cup\cdots\cup(\frac{t-1}{t},1), &  t=3,5,7,\cdots,
\end{cases}
$$ and $$V_{t,1}=\begin{cases}
	(\frac{1}{t},\frac{2}{t})\cup(\frac{3}{t},\frac{4}{t})\cup\cdots\cup(\frac{t-1}{t},1), & t=2,4,6,\cdots; \\
	(\frac{1}{t},\frac{2}{t})\cup(\frac{3}{t},\frac{4}{t})\cup\cdots\cup(\frac{t-2}{t},\frac{t-1}{t}), &  t=3,5,7,\cdots.
\end{cases}
$$
Then for $t\geq2$, $\mathbf{1}_{V_{t,0}},\mathbf{1}_{V_{t,1}}\in\mathcal{C}$.
In fact, set $\varphi\equiv1$ on $(0,1)$ and extend $\varphi$ to an odd $2$-periodic function on $\mathbb{R}$, which is still denoted by $\varphi$.
  Hence $$\mathbf{1}_{V_{t,0}}(x)=\frac{\varphi(x)+\varphi(tx)}{2},\quad 0<x<1$$ and
  $$\mathbf{1}_{V_{t,1}}(x)=\frac{\varphi(x)-\varphi(tx)}{2},\quad 0<x<1,$$
   which gives
$$\mathcal{D}\mathbf{1}_{V_{t,0}}(s)=\frac{1+t^{-s}}{2}\mathcal{D}\varphi(s)
=\frac{\sqrt{2}}{\pi}(1+t^{-s})L(s+1,\chi_2)$$
and $$\mathcal{D}\mathbf{1}_{V_{t,1}}(s)=\frac{1-t^{-s}}{2}\mathcal{D}\varphi(s)
=\frac{\sqrt{2}}{\pi}(1-t^{-s})L(s+1,\chi_2).$$
Obviously, both $\mathcal{D}\mathbf{1}_{V_{t,0}}$ and $\mathcal{D}\mathbf{1}_{V_{t,1}}$ have no zero in $\mathbb{C}_0$.
\end{exam}

\subsection{Some criteria via boundary points of $V$}
 We continue to establish several criteria that are more convenient in some special cases, especially when the least common denominator $t_V$ is small (Theorem \ref{1to6}).
 To this end, we  represent the function $f$ in another way.

Let $J_V(x)\ (x\in\mathbb{R})$ denote the jump of $\mathbf{1}_V$  at $x$:
$$J_V(x)=\mathbf{1}_V(x^+)-\mathbf{1}_V(x^-)=\lim_{u\rightarrow x^+}\mathbf{1}_V(u)-\lim_{v\rightarrow x^-}\mathbf{1}_V(v),$$
where $\mathbf{1}_V$ is identified with its odd $2$-periodic extension on $\mathbb{R}$.
The  function $\mathbf{1}_V$ can be represented as
$$\mathbf{1}_V=\frac{1}{2}J_V(0)\, \mathbf{1}_{(0,1)}
+\sum_{m=1}^{t_V-1}J_V(\frac{m}{t_V})\mathbf{1}_{(\frac{m}{t_V},1)}.$$
Then  by (\ref{f(n)}),
\begin{equation} \label{rep of f}
  \begin{split}
     f(n) & =J_V(0)+J_V(1)\cos (n\pi)+2\sum_{m=1}^{t_V-1}J_V(\frac{m}{t_V})\cos \frac{mn\pi}{t_V} \\
       & =\sum_{m=1}^{q}J_V(\frac{m}{t_V})e^{2\pi  imn/q}.
  \end{split}
\end{equation}
 Put \begin{equation}\label{g(m)}
       g(m)=J_V(\frac{m}{t_V}),\quad m\in\mathbb{Z}.
     \end{equation}
 Then $g$ coincide with  the Fourier transform $\mathcal{T}f$ of $f$ since
 $$g(m)=\frac{1}{q}\sum_{n=1}^{q}f(n)e^{-2\pi  imn/q}$$  by (\ref{rep of f}) (see \cite[Theorem 8.4]{Apo}, also see Subsection 2.1 for the definition of the transform $\mathcal{T}$).
Since $f$ is real-valued, if $f\in E_{q,\psi}$ for some $\psi\in\mathcal{P}_q$ then $\psi$ is necessarily real-valued by Lemma \ref{q0qd} below, which yields that $g$  belongs to the same space $E_{q,\psi}$ by Lemma \ref{Fourier transform}. In the same way, if $g\in E_{q,\psi}$ for some $\psi\in\mathcal{P}_q$ then $f\in E_{q,\psi}$.

 \begin{lem} \label{q0qd}
 Suppose  $\psi\in\mathcal{P}_q$ is of modulus $q_0$, and $f\in E_{q,\psi}$ with $f\not\equiv0$.
  If $f$ is real-valued, then $\psi$ is also real-valued.
\end{lem}
\begin{proof}
 It suffices to show $\psi(a)\in\mathbb{R}$  for every  $a\in\mathbb{N}$ with $\gcd(a,q_0)=1$.
By Dirichlet’s Theorem on arithmetic progressions (see Subsection 2.1), for  such $a$ there is a prime $p>q$ such that
$p\equiv a\,(\mathrm{mod}\,q_0)$. Choose $m\in\mathbb{N}$ satisfying $f(m)\neq0$, and put $n=\gcd(m,\frac{q}{q_0})$. Then $$\gcd(pm,\frac{q}{q_0})=\gcd(m,\frac{q}{q_0})=n,$$
which gives $$f(pm)=f(n)\psi(\frac{pm}{n})=f(n)\psi(\frac{m}{n})\psi(p)=f(m)\psi(p).$$
Therefore
 $\psi(a)=\psi(p)=\frac{f(pm)}{f(m)}$ is a real number.
%
%
\end{proof}

 Intuitively, it is much easier to check that $g$ belongs to one such  space other than $f$ since $g(m)$ only takes values $0,1$ or $-1$ for $1\leq m\leq t_V-1$. So one may expect to use the information from boundary points of $V$ instead of the sine-Fourier coefficients of $\mathbf{1}_V$.
   We summarize the above discussion into the following result.

     \begin{cor} \label{fg}
       Let $V,q,f$ and $g$ be given as in (\ref{Vdisplay}), (\ref{q}), (\ref{f(n)}) and (\ref{g(m)}). If $f\in E_{q,\psi}$ for some $\psi\in\mathcal{P}_q$, then  $g\in E_{q,\psi}$,
       and vice versa.

       In particular,
       if $\mathbf{1}_V\in\mathcal{C}$ then $g\in E_{q,\psi}$ for some $\psi\in\mathcal{P}_q$.
     \end{cor}

We will use Corollary \ref{fg} to establish Theorems \ref{q=q02}, \ref{specriterion} and \ref{Eq}.

\begin{thm}\label{q=q02}
  Let $V,q$ and $g$ be given as in (\ref{Vdisplay}), (\ref{q}) and (\ref{g(m)}). Suppose that $g\in E_{q,\psi}$ for some $\psi\in\mathcal{P}_q$, and the modulus of $\psi$ is $q$, then $\mathbf{1}_V\in\mathcal{C}$.
\end{thm}

Theorem \ref{q=q02} can be obtained by combining  Corollary \ref{fg} with Corollary \ref{q=q01}.

\begin{thm}\label{specriterion}
Let $V,q$ and $g$ be given as in (\ref{Vdisplay}), (\ref{q}) and (\ref{g(m)}). Suppose that $g\in E_{q,\psi}$ for some $\psi\in\mathcal{P}_q$, and let $q_0$ denote the modulus of $\psi$. We also assume that
\begin{itemize}
  \item [(1)] $\frac{q}{q_0}$ is a product of some distinct primes that cannot divide $q_0$.
  \item [(2)] $g(d_0)\neq0$ for some divisor $d_0$ of $\frac{q}{q_0}$, and $g(d)=0$ for any other divisor $d$ of $\frac{q}{q_0}$.
\end{itemize}
Then $\mathbf{1}_V\in\mathcal{C}$.
\end{thm}

To prove Theorem \ref{specriterion}, we need the following.

\begin{lem} \label{f=gtau} Suppose that $f\in E_{q,\psi}$ for some $\psi\in\mathcal{P}_q$, and the modulus of $\psi$ is $q_0$.
Then
 $$f(n)=\sum_{d\mid \frac{q}{q_0}}g(d)\tau(n,\overline{\psi_{\frac{q}{d}}}),\quad n=1,2,\cdots,$$
 where $g=\mathcal{T}f$ is the Fourier transform of $f$,  $\psi_{\frac{q}{d}}$ is the Dirichlet character mod $\frac{q}{d}$ induced by  $\psi$.
\end{lem}
\begin{proof} By Lemma \ref{Fourier transform}, we have $g\in E_{q,\bar{\psi}}$, which together with Lemma \ref{Epsi lemma} in turn gives
$$
  f(n)=\sum_{m=1}^{q}g(m)e^{2\pi imn/q}
=\sum_{m=1}^{q}g(\widehat{m})\overline{\psi(\frac{m}{\widehat{m}})}e^{2\pi imn/q}.
$$
Note that
$\widehat{m}=d$ for some divisor $d$ of $\frac{q}{q_0}$ if and only if $m=ld$ with $\gcd(l,r_d)=1$, where $r_d=\frac{q}{dq_0}$.
It follows that
\begin{equation*}
  \begin{split}
     f(n) & =\sum_{d\mid \frac{q}{q_0}}g(d)\sum_{\substack{l=1  \\ (l,r_d)=1}}^{\frac{q}{d}}\overline{\psi(l)}
     e^{2\pi ildn/q} \\
       & =\sum_{d\mid \frac{q}{q_0}}g(d)\sum_{l=1}^{\frac{q}{d}}\overline{\psi(l)\chi_{r_d}(l)}
     e^{2\pi ildn/q} \\
       & =\sum_{d\mid \frac{q}{q_0}}g(d)\sum_{l=1}^{\frac{q}{d}}\overline{\psi_{\frac{q}{d}}(l)}
     e^{2\pi iln/\frac{q}{d}}\\
       & =\sum_{d\mid \frac{q}{q_0}}g(d)\tau(n,\overline{\psi_{\frac{q}{d}}}),
  \end{split}
\end{equation*} where $\chi_{r_d}$ is the principle Dirichlet character mod $r_d$.
The proof is complete.
\end{proof}

\noindent\textbf{Proof of Theorem \ref{specriterion}.}
  Let $d_0$ be the unique divisor of $\frac{q}{q_0}$ satisfying $g(d_0)\neq0$. Then by Lemma \ref{f=gtau},
  \begin{equation}\label{f(n)=}
    f(n)=g(d_0)\tau(n,\psi_{\frac{q}{d_0}}),\quad n=1,2,\cdots.
  \end{equation}
  This together with Lemma \ref{tau Epsi} yields
  $f\in E_{\frac{q}{d_0},\psi}$, and thus
  $(f\ast\mu\,\psi)(n)=0$ whenever $n\nmid \frac{q}{d_0q_0}$ by Lemma \ref{Epsi lemma}.
  It suffices to show that
  $$P(s)=\sum_{d\mid\frac{q}{d_0q_0}}(f\ast\mu\,\psi)(d)d^{-s}$$
  has no zeros in $\mathbb{C}_1$.
  Fix a divisor $k$ of $\frac{q}{d_0q_0}$. Then by Assumption (1), $$\gcd(k,\frac{q}{kq_0})=\gcd(k,q_0)=1$$ and $$\mu(k)^2=\psi(k)^2=1.$$
  Thus, $\gcd(k,\frac{q}{k})=1$,
  $\phi(k)\phi(\frac{q}{kd_0})=\phi(\frac{q}{d_0})$, and by Lemma \ref{tau lemma},
  $$\tau(k,\psi_{\frac{q}{d_0}})=\frac{\phi(\frac{q}{d_0})}{\phi(\frac{q}{kd_0})}
  \mu(\frac{q}{kd_0q_0})\psi(\frac{q}{kd_0q_0})\tau(\psi)
  =\phi(k)\mu(\frac{q}{kd_0q_0})\psi(\frac{q}{kd_0q_0})\tau(\psi).$$
  It follows that
 for any divisor $l$ of $\frac{q}{k}$, \begin{equation}\label{mucpsic}
\begin{split}
   \tau(k,\psi_{\frac{q}{d_0}})\mu(l)\psi(l) & =\phi(k)\tau(\psi)\mu(\frac{q}{kd_0q_0})\psi(\frac{q}{kd_0q_0})\mu(l)\psi(l) \\ & =\phi(k)\tau(\psi)\mu(\frac{q}{kd_0q_0})\psi(\frac{q}{kd_0q_0})\mu(l)\psi(l)\mu(k)^2\psi(k)^2 \\
     & =\phi(k)\tau(\psi)\mu(\frac{q}{d_0q_0})\psi(\frac{q}{d_0q_0})\mu(kl)\psi(kl).
\end{split}
  \end{equation}
  Therefore,
  \begin{equation*}
    \begin{split}
       (f\ast\mu\,\psi)(d) & =\sum_{k\mid d}f(k)\mu(\frac{d}{k})\psi(\frac{d}{k}) \\
         & =g(d_0)\sum_{k\mid d}\tau(k,\psi_{\frac{q}{d_0}})\mu(\frac{d}{k})\psi(\frac{d}{k}) \\
         & =C\mu(d)\psi(d)\sum_{k\mid d}\phi(k) \\
         & =Cd\mu(d)\psi(d),
    \end{split}
  \end{equation*}
  where $$C=g(d_0)\tau(\psi)\mu(\frac{q}{d_0q_0})\psi(\frac{q}{d_0q_0})\neq0$$ (see \cite[Theorem 8.15]{Apo}), the second identity follows from (\ref{f(n)=}), and the third identity follows from (\ref{mucpsic}).
 This gives \begin{equation*}
    \begin{split}P(s) & =C\sum_{d\mid\frac{q}{d_0q_0}}\mu(d)\psi(d)d^{1-s} \\ & =C\prod_{\substack{p\mid\frac{q}{d_0q_0} \\ p\,\text{prime}}}(\mu(p)\psi(p)+p^{1-s}) \\
    & =C\prod_{\substack{p\mid\frac{q}{d_0q_0} \\ p\,\text{prime}}}(-\psi(p)+p^{1-s}).\end{split}
  \end{equation*}
     It is clear that $P(s)\neq0$ whenever $\mathrm{Re}\, s>1$.
$\hfill \square $
\vskip2mm

 To continue, we need to introduce some more notations. For $q\in\mathbb{N}$, set $$M_q(\mathbf{z})=\sum_{d\mid q}\mu(\frac{q}{d})\mathbf{z}^{\alpha(d)}$$
and $$S_q=\{j\in\mathbb{N}:p_j\mid q\},$$ where $p_j$  is the $j$-th prime.
 It is clear  that $M_q$ is a polynomial only involving complex variables $\{z_j:j\in S_q\}$.
 Since $$\sum_{d\mid q}\mu(\frac{q}{d})d^{-s}=q^{-s}\sum_{d\mid q}\mu(\frac{q}{d})(\frac{q}{d})^s
 =q^{-s}\prod_{j\in S_q}(1-p_j^s)=q^{-s}\prod_{j\in S_q}\frac{p_j^{-s}-1}{p_j^{-s}},$$
 we have
 $$M_q=\mathcal{B}(\sum_{d\mid q}\mu(\frac{q}{d})d^{-s})=\mathcal{B}(q^{-s}\prod_{j\in S_q}\frac{p_j^{-s}-1}{p_j^{-s}}).$$
 It follows that
 if $q\ (q\geq2)$  has prime factorization $q=p_{j_1}^{k_1}\cdots p_{j_l}^{k_l}$,  then $S_q=\{j_1,\cdots,j_l\}$ and \begin{equation}\label{repofMq}
   M_q(z_{j_1},\cdots,z_{j_l})=z_{j_1}^{k_1-1}(z_{j_1}-1)\cdots z_{j_l}^{k_l-1}(z_{j_l}-1).
 \end{equation}
 From this, one concludes that $M_{q_1q_2}=M_{q_1}M_{q_2}$ whenever $\gcd(q_1,q_2)=1$.

 The space $E_{q,\chi_1}$ is abbreviated as $E_q$, where $\chi_1\equiv1$.

 Lastly,  for a nonempty subset $S$ of $\mathbb{N}$, define
 $$\mathbb{D}^S=\{\mathbf{z}\in\mathbb{D}_2^\infty: \text{ $\mathbf{z}_j=0$  for $j\notin S$}\}.$$
 Then $\mathbb{D}^S$ is identified with the $n$-polydisk in an obvious way, where $n=\sharp S$.

\begin{thm}\label{Eq}
Let $V,q$ and $g$ be given as in (\ref{Vdisplay}), (\ref{q}) and (\ref{g(m)}).
If $g\in E_{q}$,  then $\mathbf{1}_V\in\mathcal{C}$ if and only if the polynomial \begin{equation}\label{R}
  R=\sum_{d\mid q}g(\frac{q}{d})M_d
\end{equation} has no zero in $\mathbb{D}^{S_q}$.
\end{thm}

Ramanujan's sum $c_q$ is defined by
$$c_q(n):=\tau(n,\chi_q)=\sum_{\substack{m=1  \\ (m,q)=1}}^{q}e^{2\pi imn/q},$$ where $\chi_q$ is
the principle Dirichlet character mod $q$. The following formula will be used to prove Theorem \ref{Eq}.

\begin{lem}\label{mucq}
  For $q\geq1$ we have $$
        (\mu\ast c_q)(n)=\begin{cases}
                        n\mu(\frac{q}{n}), &  n\mid q; \\
                        0, & n\nmid q.
                      \end{cases}
       $$
\end{lem}
\begin{proof}
  Fix $n\in\mathbb{N}$. It follows from \cite[Theorem 8.6]{Apo} that for any divisor $d$ of $n$,
  $$c_q(d)=\sum_{k\mid(d,q)}k\mu(\frac{q}{k}).$$
  Hence, we have
   $$(\mu\ast c_q)(n)=\sum_{d\mid n}c_q(d)\mu(\frac{n}{d})=\sum_{d\mid n}\sum_{k\mid(d,q)}k\mu(\frac{q}{k})\mu(\frac{n}{d})=\sum_{k\mid(n,q)}k\mu(\frac{q}{k})\sum_{d\in A_k}\mu(\frac{n}{d}),$$
   where $A_k=\{d:k\mid d,\ d\mid n\}$. Note that for any divisor $k$ of $\gcd(n,q)$, $d\in A_k$ if and only if $d=ck$ for some integer $c$ with $c\mid \frac{n}{k}$. The sum $\sum_{d\in A_k}\mu(\frac{n}{d})$ in the above equality is equals to
   $$\sum_{c\mid\frac{n}{k}}\mu(\frac{n}{ck})=\begin{cases}
                        1, &  \frac{n}{k}=1; \\
                        0, & \frac{n}{k}>1.
                      \end{cases}$$
   If $n\nmid q$ then $\gcd(n,q)<n$, and thus $(\mu\ast c_q)(n)=0$.
   Now assume $n\mid q$. Then
   $$(\mu\ast c_q)(n)=\sum_{k\mid n}k\mu(\frac{q}{k})\sum_{d\in A_k}\mu(\frac{n}{d})= n\mu(\frac{q}{n}).$$ This completes the proof.
\end{proof}

\noindent\textbf{Proof of Theorem \ref{Eq}.} It suffices to show
 $$\sum_{d\mid q}\frac{(f\ast\mu)(d)}{d}\mathbf{z}^{\alpha(d)}
 =\mathcal{B}(\sum_{d\mid\frac{q}{q_0}}\frac{(f\ast\mu\,\psi)(d)}{d^s})=\sum_{d\mid q}g(\frac{q}{d})M_d.$$
 By Lemma \ref{f=gtau}, we have
\begin{equation*}
  \begin{split}
     (f\ast\mu)(d) & =\sum_{k\mid d}f(k)\mu(\frac{d}{k}) \\
       & = \sum_{k\mid d}\mu(\frac{d}{k})\sum_{c\mid q}g(c)\tau(k,\chi_{\frac{q}{c}})\\
       & = \sum_{c\mid q}g(c)\sum_{k\mid d}\mu(\frac{d}{k})c_{\frac{q}{c}}(k)\\
       & = \sum_{c\mid q}g(c)(\mu\ast c_{\frac{q}{c}})(d),
  \end{split}
\end{equation*} where $\chi_{\frac{q}{c}}$ is
the principle Dirichlet character mod $\frac{q}{c}$.
For a divisor $c$ of $q$, Lemma \ref{mucq} gives $$
        (\mu\ast c_{\frac{q}{c}})(d)=\begin{cases}
                        d\mu(\frac{q}{cd}), & c\mid\frac{q}{d}; \\
                        0, & c\nmid\frac{q}{d}.
                      \end{cases}$$
                      Therefore,
                      $$(f\ast\mu)(d)=\sum_{c\mid\frac{q}{d}}g(c)(\mu\ast c_{\frac{q}{c}})(d)
                      =d\sum_{c\mid\frac{q}{d}}g(c)\mu(\frac{q}{cd}),$$
                      which yields
\begin{equation*}
  \begin{split}
    \sum_{d\mid q}\frac{(f\ast\mu)(d)}{d}\mathbf{z}^{\alpha(d)}
       & =\sum_{d\mid q}(\sum_{c\mid\frac{q}{d}}g(c)\mu(\frac{q}{cd}))\mathbf{z}^{\alpha(d)} \\
       & =\sum_{c\mid q}g(c)\sum_{d\mid \frac{q}{c}}\mu(\frac{q}{cd})\mathbf{z}^{\alpha(d)} \\
       & = \sum_{c\mid q}g(c)M_{\frac{q}{c}}\\
       & =\sum_{d\mid q}g(\frac{q}{d})M_d.
  \end{split}
\end{equation*}
$\hfill \square $
\vskip2mm
%
%
%
\section{Determining the character $\psi$}

In this section, we will show that for most $V$, which is given as in (\ref{Vdisplay}) and satisfying $\mathbf{1}_V\in\mathcal{C}$, the Dirichlet character $\psi$ determined by $V$ (see Theorem \ref{precri} and Remark \ref{uniquepsi}) coincides with the Dirichlet character $\chi_1$ mod $1$, i.e., $\psi\equiv1$.

\begin{thm} \label{criterion}
  Let $V,q$ and $g$ be given as in (\ref{Vdisplay}),  (\ref{q}) and (\ref{g(m)}).
   Assume that one of the following two conditions is satisfied:
   \begin{itemize}
     \item [(1)] $t_0=1,2$ or $t_0=p^k$ for some $k\in\mathbb{N}$ and some prime $p$ with $p\equiv 3\,(\mathrm{mod}\,4)$, where $t_0=\gcd(t_1, t_1', \cdots, t_N, t_N')$.
     \item [(2)] $t_V\geq7$ and $3\nmid t_V$.
   \end{itemize}
   If $\mathbf{1}_V\in\mathcal{C}$ then
    $g\in E_{q}$, i.e., $g(m_1)=g(m_2)$ whenever $\gcd(m_1,q)=\gcd(m_2,q)$.

    As a consequence, $\mathbf{1}_V\in\mathcal{C}$ if and only if $g\in E_{q}$ and the polynomial $R$ in  (\ref{R})  has no zero in $\mathbb{D}^{S_q}$.
\end{thm}

\subsection{Proof of Theorem \ref{criterion}}

The key ingredient to the proof of Theorem \ref{criterion} is the following result that only involves arithmetical functions in the space $E_{q,\psi}$.

\begin{lem}\label{keylem}
Suppose that $q$ is an even positive integer with $q\geq14$ and $3\nmid q$, and that $\psi$ is an even Dirichlet character in $\mathcal{P}_q$.
 Assume that  there is a function $g\in E_{q,\psi}$, such that $$\gcd\left(\{m\in\mathbb{Z}:g(m)\neq0\}\cup\{\frac{q}{2}\}\right)=1,$$ and that the sum function $S(m)=\sum_{i=1}^{m}g(i)$ of $g$  only takes values $0$ or $1$ for $1\leq m\leq\frac{q}{2}-1$.
Then  $\psi\equiv1$.
\end{lem}

Due to the length of the proof of Lemma \ref{keylem}, we first use Lemma \ref{keylem} and the following four lemmas to prove Theorem \ref{criterion}, and then prove Lemma \ref{keylem} in Subsection 4.3.

\begin{lem}\cite[Proposition 2.1.34]{Co} \label{q1q2}
  Suppose that $q$ is the product of two relatively prime positive integers $q_1, q_2$, and $\psi$ is a primitive Dirichlet character mod $q$. Then $\psi=\psi_1\psi_2$, where $\psi_i\ (i=1,2)$  is a primitive Dirichlet characters mod $q_i$.
\end{lem}

Remark that the decomposition $\psi=\psi_1\psi_2$ in Lemma \ref{q1q2} is unique due to the Chinese remainder theorem.

Lemma \ref{realpc} below comes from standard analytic number theory, see
\cite[pp. 37-38]{Da2} for instance.

\begin{lem}\label{realpc}
\begin{itemize}
  \item [(1)] There are only three real primitive Dirichlet characters mod  $2^k\  (k\in\mathbb{N})$: the Dirichlet character $\psi_4^{\mathrm{pr}}$
   mod $4$ determined by $\psi_4^{\mathrm{pr}}(3)=-1$; the Dirichlet character $\psi_{8,1}^{\mathrm{pr}}$
   mod $8$ determined by $\psi_{8,1}^{\mathrm{pr}}(3)=1$ and $\psi_{8,1}^{\mathrm{pr}}(5)=\psi_{8,1}^{\mathrm{pr}}(7)=-1$; the Dirichlet character $\psi_{8,2}^{\mathrm{pr}}$
   mod $8$ determined by $\psi_{8,2}^{\mathrm{pr}}(3)=\psi_{8,2}^{\mathrm{pr}}(5)=-1$ and $\psi_{8,2}^{\mathrm{pr}}(7)=1$.
  \item [(2)] For an odd prime $p$,  the only real primitive Dirichlet character
   mod $p$ is the Legendre symbol $(n\mid p)$, while if $k\geq2$ there are no real primitive Dirichlet characters mod $p^k$.
\end{itemize}
\end{lem}

\begin{rem} \label{2ks}
In light of Lemmas \ref{q1q2} and \ref{realpc}, for a real primitive Dirichlet character $\psi$,
its modulus $q_0$ has form
$$q_0=2^ks,$$ where $k=0, 2$ or $3$, and $s=1$ or $s=p_1\cdots p_l$, a product of some distinct primes. Write $\psi(n)=\psi_1(n)\psi_2(n)$,
where $\psi_1\equiv1$ or $\psi_1=\psi_4^{\mathrm{pr}}, \psi_{8,1}^{\mathrm{pr}}, \psi_{8,2}^{\mathrm{pr}}$, and $\psi_2\equiv1$ or
$$\psi_2(n)=(n\mid s):=\prod_{i=1}^{l}(n\mid p_i).$$
Since $\psi_4^{\mathrm{pr}}(-1)=\psi_{8,1}^{\mathrm{pr}}(-1)=-1$, $\psi_{8,2}^{\mathrm{pr}}(-1)=1$,
and $\psi_2(-1)=\pm1$ if and only if $s\equiv \pm1\,(\mathrm{mod}\,4)$ (see \cite[Theorem 9.10]{Apo}),
we conclude that when $\psi$ is even (i.e., $\psi(-1)=1$),
\begin{itemize}
  \item [(1)] $s\equiv 1\,(\mathrm{mod}\,4)$ if $k=0$ or $k=3$, $\psi_1=\psi_{8,2}^{\mathrm{pr}}$;
  \item [(2)] $s\equiv 3\,(\mathrm{mod}\,4)$ if $k=2$ or $k=3$, $\psi_1=\psi_{8,1}^{\mathrm{pr}}$.
\end{itemize}
\end{rem}

\begin{lem}\label{gatb}
Suppose $q=2t\ (t\in\mathbb{N})$, $g\in E_{q,\psi}$ for some $\psi\in\mathcal{P}_q$, and $\frac{a}{b}$ is an irreducible fraction in $(0,1)$ with $b\mid t$. Put $\tilde{a}=\gcd(a,2)$ and $\tilde{b}=\gcd(b,2)$. Then we have
\begin{itemize}
  \item [(1)] $q_0\mid b\tilde{b}$ provided $g(\frac{at}{b})\neq0$, where $q_0$ is the modulus of $\psi$.
  \item [(2)]
$g(\frac{a t}{b})=g(\frac{\tilde{a}t}{b})\psi(\frac{a}{\tilde{a}})$ and
$|g(\frac{a t}{b})|=|g(\frac{\tilde{a}t}{b})|$.
\end{itemize}
\end{lem}
\begin{proof}
A direct calculation gives
$$
  \gcd(\frac{at}{b},\frac{q}{q_0})
=\frac{t}{bq_0}\gcd(aq_0,2b)
=\frac{\tilde{a} t}{bq_0}\gcd(\frac{a}{\tilde{a}}q_0,\frac{2}{\tilde{a}}b)
=\frac{\tilde{a} t}{bq_0}\gcd(q_0,\frac{2}{\tilde{a}}b).
$$
Since   $g\in E_{q,\psi}$,
\begin{equation}\label{s0t0}
 g(\frac{at}{b})
=g(\gcd(\frac{at}{b},\frac{q}{q_0}))\psi(c),
\end{equation}
where
$$c=\frac{\frac{at}{b}}{\gcd(\frac{at}{b},\frac{q}{q_0})}
=\frac{aq_0}{\tilde{a}\cdot\gcd(q_0,\frac{2 b}{\tilde{a}})}.$$

To show (1), assume $g(\frac{at}{b})\neq0$.
By (\ref{s0t0}), we have
 $\psi(c)\neq0$, equivalently, $\gcd(c,q_0)=1$. This yields $q_0=\gcd(q_0,\frac{2 b}{\tilde{a}})$, forcing $q_0\mid \frac{2 b}{\tilde{a}}$. Since there is no primitive Dirichlet character mod $2$,
Lemma \ref{q1q2}  further implies
 $q_0\mid b$ if  $b$ is odd. Therefore, we have proved (1).

By (1), if $q_0\nmid b\tilde{b}$ then $g(\frac{at}{b})=g(\frac{\tilde{a}t}{b})=0$.
So we only need  to prove (2) for the case that
$q_0\mid b\tilde{b}$. In this case, we have $q_0\mid \frac{2 b}{\tilde{a}}$, and thus $c=\frac{a}{\tilde{a}}$. Moreover,
$$\gcd(\frac{at}{b},\frac{q}{q_0})
=\frac{at}{cb}
=\frac{\tilde{a} t}{b}.$$
Hence by (\ref{s0t0}),
$$g(\frac{at}{b})
=g(\frac{\tilde{a} t}{b})\psi(\frac{a}{\tilde{a}}).$$
 It remains to  show $\gcd(a,q_0)=1$, which would imply $|\psi(\frac{a}{\tilde{a}})|=1$.
When $b$ is odd,  $q_0\mid b$; when $b$ is even,  $q_0\mid 2b$ and $a$ is necessarily odd since $\gcd(a,b)=1$. In either case, we have proved $\gcd(a,q_0)=1$. The proof is complete.
\end{proof}

\begin{lem}\label{sumfunclem}
  Let $V$ and $g$ be given as in (\ref{Vdisplay})   and (\ref{g(m)}). Then
  the sum function $S(m)=\sum_{i=1}^{m}g(i)$ of $g$ only takes values $-\mathbf{1}_V(0^+)$ or $1-\mathbf{1}_V(0^+)$ for $1\leq m\leq t_V-1$.
\end{lem}
\begin{proof}
  For $1\leq m\leq t_V-1$,
$$S(m)=\sum_{i=1}^{m}J_V(\frac{i}{t_V})
=\mathbf{1}_V((\frac{m}{t_V})^+)-\mathbf{1}_V(0^+).$$
This proves the lemma.
\end{proof}

\vskip2mm
\noindent\textbf{Proof of Theorem \ref{criterion}.}
Assume $\mathbf{1}_V\in\mathcal{C}$. Then by Corollary \ref{fg}, $g\in E_{q,\psi}$ for some $\psi\in\mathcal{P}_q$. Since $g$ is an even function by its definition,
it follows that $\psi$ is also even, i.e., $\psi(-1)=1$.

Assume (1) holds and list all of the different numbers appearing in $\{t_i\}_{i=1}^{N}\cup\{t_i'\}_{i=1}^{N}$: $b_1$, $\cdots$, $b_n\ (n\in\mathbb{N})$.
Then there exist $n$ integers $a_1$, $\cdots$, $a_n$, such that for each $1\leq i\leq n$, $\gcd(a_i,b_i)=1$ and $g(\frac{a_it_V}{b_i})=J_V(\frac{a_i}{b_i})\neq0$. By Lemma \ref{gatb} (1),
$q_0\mid2b_i\ (1\leq i\leq n)$, which yields $q_0\mid2t_0$ since $t_0=\gcd(b_1, \cdots, b_n)$.
Thus, the characterization of $q_0$ in Remark \ref{2ks} forces $q_0=1$.

Now assume (2) holds. As mentioned in Introduction (see (\ref{Vdisplay'})), one can rewrite $V$ as
$$
   V=(\frac{\alpha_1}{t_V},\frac{\beta_1}{t_V})\cup\cdots\cup(\frac{\alpha_N}{t_V},\frac{\beta_N}{t_V}),
$$ where  $\alpha_1,\beta_1,\cdots,\alpha_N,\beta_N$ are integers with $
   \gcd(\alpha_1,\beta_1,\cdots,\alpha_N,\beta_N,t_V)=1.$
   Since $t_V=\frac{q}{2}$ and $g(\alpha_i), g(\beta_i)\neq0\ (1\leq i\leq N)$, it follows that the greatest common divisor of $$\{m\in\mathbb{Z}:g(m)\neq0\}\cup\{\frac{q}{2}\}$$ is $1$.
Hence, Lemma \ref{sumfunclem} implies that  $(1-2\mathbf{1}_V(0^+))g$ satisfies all hypothesis in  Lemma \ref{keylem}. This completes the proof.
$\hfill \square $
\vskip2mm


\subsection{Preparatory lemmas}
In this subsection, we establish some preparatory lemmas for the proof of Lemma \ref{keylem}.


\begin{lem} \label{pclem}
\begin{itemize}
  \item [(1)] For every prime $p$ with $p\geq7$, there exist two integers $a, b$ with $2\leq a, b\leq p-2$, such that $(1-a^2\mid p)=1$ and $(1-b^2\mid p)=-1$.
  \item [(2)] For $a, b\in\mathbb{Z}$ with $5\nmid ab$, there exists
$k\in\{1, 2\}$ such that $((k a)^2-b^2\mid 5)=-1$.
\end{itemize}
\end{lem}
\begin{proof}
(1) It suffices to find two integers $\tilde{a}, \tilde{b}$ with $2\leq \tilde{a}, \tilde{b}\leq p-2$, such that $$(\tilde{a}^2-1\mid p)=1,\quad(\tilde{b}^2-1\mid p)=-1.$$

If $(2\mid p)=1$ and $(3\mid p)=-1$, then $$(3^2-1\mid p)=(8\mid p)=(2\mid p)=1,$$
$$(2^2-1\mid p)=(3\mid p)=-1.$$

If $(2\mid p)=-1$ and $(3\mid p)=1$, then $$(2^2-1\mid p)=(3\mid p)=1,$$ $$(3^2-1\mid p)=(2\mid p)=-1.$$

If $(2\mid p)=(3\mid p)=-1$, then $$(5^2-1\mid p)=(24\mid p)=(6\mid p)=1,$$ $$(2^2-1\mid p)=(3\mid p)=-1.$$

Now suppose $(2\mid p)=(3\mid p)=1$ and let $n$ be the smallest positive integer such that  $(n\mid p)=-1$. Then $n\geq4$, and
$$((n-2)^2-1\mid p)=((n-1)(n-3)\mid p)=(n-1\mid p)(n-3\mid p)=1,$$
$$((n-1)^2-1\mid p)=(n(n-2)\mid p)=(n\mid p)(n-2\mid p)=-1.$$ This proves (1).

(2) Note that $a^2\equiv \pm1\,(\mathrm{mod}\,5)$, $b^2\equiv \pm1\,(\mathrm{mod}\,5)$. Then one can finish the proof
by taking $k=1$ if $a^2\equiv -b^2\,(\mathrm{mod}\,5)$ or $k=2$ if $a^2\equiv b^2\,(\mathrm{mod}\,5)$.
\end{proof}

\begin{lem} \label{psi1lem}
  Suppose that the Dirichlet character $\psi\in\mathcal{P}_q$ with modulus $q_0$ is even, and  $n\in\mathbb{N}$ with $n\mid\frac{q}{q_0}$.
  \begin{itemize}
    \item [(1)] Let $q_0=q_1q_2$ with $\gcd(q_1,q_2)=1$, and $\psi=\psi_1\psi_2$ be the decomposition of $\psi$ according to Lemma \ref{q1q2}. Then for any $d\mid q_1$ and any $k\in\mathbb{N}$, we have $$\psi((\frac{k q}{dn})^2-1)=\psi_1(1-(\frac{k q}{dn})^2).$$
    \item [(2)] If $p$ is an odd prime divisor of  $q_0$  and $p^2\mid \frac{q}{n}$, then $\psi((\frac{k q}{pn})^2-1)=1$ for every $k\in\mathbb{N}$.
    \item [(3)] If $q_0$ has a prime divisor $p\ (p\geq7)$ and $p^2\nmid \frac{q}{n}$, then there exist two integers $k, l$ with
    $1\leq k, l\leq\frac{p-1}{2}$, such that $\psi((\frac{k q}{pn})^2-1)=1$ and $\psi((\frac{l q}{pn})^2-1)=-1$.
  \end{itemize}
\end{lem}
\begin{proof}
(1) Given  $d\mid q_1$ and  $k\in\mathbb{N}$, since $d\mid q_1$ and $n\mid\frac{q}{q_0}$ one has $dn\mid\frac{q}{q_2}$, i.e., $q_2\mid\frac{q}{dn}$.
This gives $$\psi_2((\frac{k q}{pn})^2-1)=\psi_2(-1)=\frac{\psi(-1)}{\psi_1(-1)}
=\frac{1}{\psi_1(-1)}=\psi_1(-1),$$
and thus
\begin{equation*}
  \begin{split}
      \psi((\frac{k q}{dn})^2-1)  & =\psi_1((\frac{k q}{dn})^2-1)\psi_2((\frac{k q}{dn})^2-1) \\
       & =\psi_1((\frac{k q}{dn})^2-1)\psi_1(-1) \\
       & =\psi_1(1-(\frac{k q}{dn})^2).
  \end{split}
\end{equation*}

(2) Since $\gcd(p,\frac{q_0}{p})=1$ (see Remark \ref{2ks}), by (1) we have
$$\psi((\frac{k q}{pn})^2-1)=(1-(\frac{k q}{pn})^2\mid p),\quad k\in\mathbb{N}.$$
This proves (2) since $p\mid \frac{q}{pn}$.

(3) We need the fact that if $p\nmid m$ then there exists an integer $k$ with
    $1\leq k\leq\frac{p-1}{2}$, such that
    $m\equiv k\,(\mathrm{mod}\,p)$ or $$m\equiv p-k\equiv -k\,(\mathrm{mod}\,p),$$ and thus
    $m^2\equiv k^2\,(\mathrm{mod}\,p)$.

    It follows from Lemma \ref{pclem} (1) that there exists an integer $a$ with $2\leq a\leq p-2$, such that $(1-a^2\mid p)=1$.
    Since $p\nmid\frac{q}{pn}$, one can choose $c\in\mathbb{N}$ satisfying $\frac{cq}{pn}\equiv 1\,(\mathrm{mod}\,p)$. By the above fact, there exists an integers $k$ with
    $1\leq k\leq\frac{p-1}{2}$, such that
    $k^2\equiv (ac)^2\,(\mathrm{mod}\,p)$,
    and therefore $$(\frac{k q}{pn})^2
    \equiv(\frac{acq}{pn})^2\equiv a^2(\frac{cq}{pn})^2\equiv a^2\,(\mathrm{mod}\,p).$$
    Hence, similar to the proof of (2), we have
    $$\psi((\frac{k q}{pn})^2-1)=(1-(\frac{k q}{pn})^2\mid p)=(1-a^2\mid p)=1.$$

    Similarly, one can  find an integer $l$ with
    $1\leq l\leq\frac{p-1}{2}$, such that $\psi((\frac{l q}{pn})^2-1)=-1$.
\end{proof}

\begin{lem} \label{abclem}
\begin{itemize}
  \item [(1)] Suppose $a, b, c\in\{-1,0,1\}$, $|a+b|\leq1$, $|b+c|\leq1$ and $|a+b+c|\leq1$. Then $b=-a=-c$ if $ac=1$, while $b=0$ if $ac=-1$.
  \item [(2)] Suppose $n\in\mathbb{N}$, $a_j\in\{-1,1\}\ (j=1,2,\cdots,n)$, and $|a_j+a_{j+1}|\leq1$ for every $1\leq j\leq n-1$. Then $a_j=(-1)^{j-1}a_1$.
\end{itemize}
\end{lem}

One can check Lemma \ref{abclem} directly.

\begin{lem}\label{Conclusion 1} Suppose that the sum function $S(m)=\sum_{i=1}^{m}g(i)$ of an arithmetic function $g$ only takes values $0$ or $1$ for $1\leq m\leq M\ (M\in\mathbb{N})$.
 For $1\leq m_1<m_2\leq M$, $$\sum_{i=m_1+1}^{m_2-1}g(i)=-g(m_1)=-g(m_2)$$ if $g(m_1)g(m_2)=1$, while $\sum_{i=m_1+1}^{m_2-1}g(i)=0$ if $g(m_1)g(m_2)=-1$.
\end{lem}
\begin{proof}
 Set
$$\alpha=\sum_{i=m_1+1}^{m_2-1}g(i)=S(m_2-1)-S(m_1).$$ Then $g(m_1), \alpha, g(m_2)\in\{-1,0,1\}$,
$$|g(m_1)+\alpha|=|S(m_2-1)-S(m_1-1)|\leq1,$$
$$|\alpha+g(m_2)|=|S(m_2)-S(m_1)|\leq1,$$
and
$$|g(m_1)+\alpha+g(m_2)|=|S(m_2)-S(m_1-1)|\leq1,$$
here we define $S(0)=0$.
 Thus Lemma \ref{abclem} (1), applied to $g(m_1), \alpha$ and $g(m_2)$, finishes the proof.
\end{proof}

\subsection{Proof of Lemma \ref{keylem}}
This subsection is dedicated to the proof of Lemma \ref{keylem}.

It is equivalent to prove that the modulus $q_0$ of $\psi$ is $1$.
To reach a contradiction,  assume conversely $q_0>1$. The proof can be roughly divided into three steps. In the first step, we establish several lemmas as preparation (Lemmas 4.13-4.18). In the second step, we show $q_0=5$ (Lemma \ref{q0lem}). The proof of Lemma \ref{keylem} will be completed in the last step.



Since $q_0>1$, by Lemma \ref{gatb} (1), $g(q)=g(\frac{q}{2})=0$ and the following conclusion holds.
\begin{lem}\label{gq4}  If $4\mid q$ then $g(\frac{q}{4})=0$.
\end{lem}

%
%
%
Set $\widehat{m}=\gcd(m,\frac{q}{q_0})\ (m\in\mathbb{Z})$.
Since for $1\leq m\leq\frac{q}{2}-1$, $S(m)$ only takes values $0$ or $1$, and $$g(q-m)=g(\widehat{q-m})\psi(\frac{q-m}{\widehat{q-m}})
=g(\widehat{m})\psi(\frac{q}{\widehat{m}}-\frac{m}{\widehat{m}})=g(\widehat{m})\psi(\frac{m}{\widehat{m}})\psi(-1)=g(m),$$
 it follows that $g$ only takes values $0$ or $\pm1$. In particular, Lemma \ref{q0qd}
implies that $\psi$ is real-valued.

%
%

 Take $m_0$ to be the smallest positive integer among those $m$'s satisfying $g(m)\neq0$. Then $g(m_0)=S(m_0)=1$. From the identity $$g(m_0)=g(\widehat{m_0})\psi(\frac{m_0}{\widehat{m_0}})$$ we deduce that
 $m_0=\widehat{m_0}$, i.e., $m_0\mid\frac{q}{q_0}$.

Let us first assume that the following inequlity holds, and put its proof at the end of this subsection.
\begin{lem}\label{m0q5q0}
 We have $m_0\leq\frac{q}{5q_0}$.
\end{lem}

 We also need
the following identity.
 Suppose $d\mid q_0$ with $d\geq4$,  $k\in\mathbb{N}$ with $k<\frac{d}{2}$, and $m\in\mathbb{Z}$. Then we have
\begin{equation} \label{lamdqd}
 g(\frac{k q}{d}+m)=g(\widehat{m})\psi(\frac{k q}{d\widehat{m}}+\frac{m}{\widehat{m}}).
\end{equation}
Indeed, since $\frac{q}{q_0}\mid\frac{q}{d}$,
 $\gcd(\frac{k q}{d}+m,\frac{q}{q_0})
=\gcd(m,\frac{q}{q_0})=\widehat{m}$, which immediately gives (\ref{lamdqd}). As a consequence,
\begin{equation}\label{glambdaqd-+m0}
  g(\frac{k q}{d}-m_0)g(\frac{k q}{d}+m_0)
=g(m_0)\psi(\frac{k q}{dm_0}-1)g(m_0)\psi(\frac{k q}{dm_0}+1)
=\psi((\frac{k q}{dm_0})^2-1),
\end{equation} and
$g(\frac{k q}{d}+m)=0$ whenever $1\leq|m|<m_0$ (in the case $m_0>1$) since
$\widehat{m}\leq|m|<m_0$, which yields
\begin{equation}\label{sumglambdaqd+m}
 \sum_{m=-m_0+1}^{m_0-1}g(\frac{k q}{d}+m)=g(\frac{k q}{d})
=g(\frac{q}{q_0})\psi(\frac{k q_0}{d}).
\end{equation}
 Note that   $$1\leq\frac{k q}{d}-m_0<\frac{k q}{d}+m_0\leq\frac{q}{2}-1$$
(since $m_0\leq\frac{q}{5q_0}\leq\frac{q}{5d}$ by Lemma \ref{m0q5q0}), and $\psi(\frac{k q_0}{d})\neq0$ only when $$\frac{q_0}{d}\cdot\gcd(k,d)=\gcd(\frac{k q_0}{d},q_0)=1.$$ With (\ref{glambdaqd-+m0}) and (\ref{sumglambdaqd+m}), invoking Lemma \ref{Conclusion 1} we  obtain the following three conclusions.

\begin{lem}\label{Conclusion 2}
  If $d\geq4$ is a proper divisor of $q_0$, then
$\psi((\frac{k q}{dm_0})^2-1)\neq1$ for $1\leq k<\frac{d}{2}$.
\end{lem}

\begin{lem}\label{Conclusion 3}
  If $\psi((\frac{k q}{q_0m_0})^2-1)=1$ for some $1\leq k<\frac{q_0}{2}$, then $$g(\frac{q}{q_0})\psi(k)=-\psi(\frac{k q}{q_0m_0}+1),$$ equivalently, $$g(\frac{q}{q_0})=-\psi(k)\psi(\frac{k q}{q_0m_0}+1).$$
\end{lem}

\begin{lem}\label{Conclusion 4}
  If $\psi((\frac{l q}{q_0m_0})^2-1)=-1$ for some $1\leq l<\frac{q_0}{2}$, then $g(\frac{q}{q_0})\psi(l)=0$.
\end{lem}

Consider the case that $q_0$ is an odd prime. If  $q_0^2\mid\frac{q}{m_0}$, then $\psi(\frac{k q}{q_0m_0}+1)=\psi(1)=1$ and $\psi(\frac{k q}{q_0m_0}-1)=\psi(-1)=1$ for every $1\leq k\leq\frac{q_0-1}{2}$ since $q_0\mid\frac{q}{q_0m_0}$ and $\psi$ is even.
By Lemma \ref{Conclusion 3},
$$\psi(k)=\psi(k)\psi(\frac{k q}{q_0m_0}+1)=-g(\frac{q}{q_0})=\psi(1)\psi(\frac{ q}{q_0m_0}+1)=1$$
holds for  $2\leq k\leq\frac{q_0-1}{2}$.
This is impossible because it would imply that  $\psi$ does not take value $-1$ (note that $\psi(q_0-k)=\psi(-k)=\psi(k)$).
Hence we further conclude that

\begin{lem}\label{Conclusion 5}
  If $q_0$ is an odd prime, then  $q_0^2\nmid\frac{q}{m_0}$.
\end{lem}

We are ready to prove the following conclusion.
\begin{lem}\label{q0lem}
  We have $q_0=5$ and  $\psi(m)=(m\mid 5)$.
\end{lem}
 \begin{proof}
 Note that $3\nmid q_0$. Then by Remark \ref{2ks}, it suffices to show that
 \begin{itemize}
   \item [(1)] $q_0$ cannot be divided by a prime $p$ with $p\geq7$;
   \item [(2)] $q_0$ is odd.
 \end{itemize}

(1) Assume that $q_0$ has a prime divisor $p$ with $p\geq7$.  Combining Lemma \ref{psi1lem} (2) with (3), we see that there exists $k\in\mathbb{N}$ with $k\leq\frac{p-1}{2}$ such that $\psi((\frac{k q}{pm_0})^2-1)=1$. Lemma \ref{Conclusion 2} shows that $p$ cannot be a proper divisor of $q_0$, i.e., $q_0=p$,
and then by  Lemma \ref{Conclusion 5}, $q_0^2\nmid\frac{q}{m_0}$. Moreover, it follows from Lemma \ref{Conclusion 3} that $g(\frac{q}{q_0})\neq0$. Again by Lemma \ref{psi1lem} (3), there exists $l\in\mathbb{N}$ with $l\leq\frac{q_0-1}{2}$ such that $\psi((\frac{l q}{q_0m_0})^2-1)=-1$, and thus $g(\frac{q}{q_0})\psi(l)=0$ by Lemma \ref{Conclusion 4}. This yields $\psi(l)=0$,  contradicting with the fact $\gcd(l,q_0)=1$.

\vskip2mm

(2) Assume that $q_0$ is even. Then we can write $q_0=q_1q_2$, where $q_1=4$ or $8$ and $q_2=1$ or $5$.
By Lemma \ref{q1q2}, for each $i\in\{1,2\}$ one can find a primitive Dirichlet characters $\psi_i$ mod $q_i$, such that $\psi=\psi_1\psi_2$.  Noting that $\psi_2(-1)=1$ whether $q_2$ equals   $1$ or $5$, we necessarily have $\psi_1(-1)=1$ since $\psi(-1)=1$, and then $q_1=8$, $\psi_1=\psi_{8,2}^{\mathrm{pr}}$.
By combining Lemma \ref{psi1lem} (1) with Lemma \ref{Conclusion 2},
we have $$\psi_{8,2}^{\mathrm{pr}}(1-(\frac{q}{4m_0})^2)
=\psi((\frac{q}{4m_0})^2-1)\neq1,$$ which yields
$16\nmid\frac{q}{m_0}$ (otherwise, $4\mid\frac{q}{4m_0}$ and then $\psi_{8,2}^{\mathrm{pr}}(1-(\frac{q}{4m_0})^2)=1$).
That is to say, $\frac{q}{8m_0}$ is odd, and hence so is $\frac{q}{q_0m_0}$.
Thus, for any $k\in\mathbb{N}$,  $$\widehat{2^km_0}=\gcd(2^km_0,\frac{q}{q_0})
=m_0\cdot\gcd(2^k,\frac{q}{q_0m_0})=m_0.$$ Note that $q_2\mid\frac{q}{8d}$ for every divisor $d$ of $\frac{q}{q_0}$ since $q_2=\frac{q_0}{8}$. Then
\begin{equation}\label{psi2}
  \psi_2(\frac{q}{8\widehat{m}}+\frac{m}{\widehat{m}})
=\psi_2(\frac{m}{\widehat{m}})=\psi_2(\frac{|m|}{\widehat{m}}),\quad m\in\mathbb{Z}.
\end{equation}
 Combining (\ref{lamdqd}) with (\ref{psi2}) one obtains that for any $m\in\mathbb{Z}$,
\begin{equation} \label{q8m}
  \begin{split}
      g(\frac{q}{8}+m)  & =g(\widehat{m})
\psi(\frac{q}{8\widehat{m}}+\frac{m}{\widehat{m}}) \\
       & =g(\widehat{m})\psi_{8,2}^{\mathrm{pr}}(\frac{q}{8\widehat{m}}
+\frac{m}{\widehat{m}})\psi_2(\frac{q}{8\widehat{m}}+\frac{m}{\widehat{m}}) \\
       & =g(\widehat{m})\psi_{8,2}^{\mathrm{pr}}(\frac{q}{8\widehat{m}}
+\frac{m}{\widehat{m}})\psi_2(\frac{|m|}{\widehat{m}}).
  \end{split}
\end{equation}
In particular,
\begin{equation} \label{q84m0}
  \begin{split}
      g(\frac{q}{8}-4m_0)g(\frac{q}{8}+4m_0)  & =g(m_0)\psi_{8,2}^{\mathrm{pr}}(\frac{q}{8m_0}-4)\psi_2(4)
      g(m_0)\psi_{8,2}^{\mathrm{pr}}(\frac{q}{8m_0}+4)\psi_2(4) \\
       & =\psi_{8,2}^{\mathrm{pr}}((\frac{q}{8m_0})^2-16) \\
       & =\psi_{8,2}^{\mathrm{pr}}((\frac{q}{8m_0})^2) \\
       & =1,
  \end{split}
\end{equation} and since $n^2\equiv 1\,(\mathrm{mod}\,8)$ for every odd integer $n$,
\begin{equation} \label{q82m0}
  \begin{split}
      g(\frac{q}{8}-2m_0)g(\frac{q}{8}+2m_0)  & =g(m_0)\psi_{8,2}^{\mathrm{pr}}(\frac{q}{8m_0}-2)\psi_2(2)
      g(m_0)\psi_{8,2}^{\mathrm{pr}}(\frac{q}{8m_0}+2)\psi_2(2) \\
       & =\psi_{8,2}^{\mathrm{pr}}((\frac{q}{8m_0})^2-4) \\
       & =\psi_{8,2}^{\mathrm{pr}}(-3) \\
       & =-1.
  \end{split}
\end{equation}
By Lemma \ref{Conclusion 1},
\begin{equation} \label{2m0}
\sum_{m=-2m_0+1}^{2m_0-1}g(\frac{q}{8}+m)=0.
\end{equation}
Moreover, it follows from (\ref{q8m}) that $g(\frac{q}{8}+m)\neq0$ if and only if $g(\widehat{m})\neq0$, $\frac{q}{8\widehat{m}}
+\frac{m}{\widehat{m}}$ is odd, and $\gcd(\frac{|m|}{\widehat{m}},q_2)=1$.
Therefore for $m\in\mathbb{N}$, $g(\frac{q}{8}+m)$ and $g(\frac{q}{8}-m)$ are either simultaneously zero or simultaneously nonzero since
$\frac{q}{8\widehat{m}}+\frac{m}{\widehat{m}}$ and $\frac{q}{8\widehat{m}}-\frac{m}{\widehat{m}}$
are simultaneously odd or even. Put
$$m_1=\min\{2m_0\leq m\leq 4m_0:g(\frac{q}{8}-m)g(\frac{q}{8}+m)=1\},$$
where the above set is nonempty since it contains $4m_0$ by (\ref{q84m0}).
Then $m_1>2m_0$ by (\ref{q82m0}) and
$$g(\frac{q}{8}-m)+g(\frac{q}{8}+m)=0,\quad 2m_0\leq m<m_1.$$ It follows from
(\ref{2m0}) that
$$\sum_{m=-m_1+1}^{m_1-1}g(\frac{q}{8}+m)=\sum_{m=-2m_0+1}^{2m_0-1}g(\frac{q}{8}+m)=0.$$
However, since $g(\frac{q}{8}-m_1)g(\frac{q}{8}+m_1)=1$ we should have
$$\sum_{m=-m_1+1}^{m_1-1}g(\frac{q}{8}+m)\neq0$$ by Lemma \ref{Conclusion 1}.
This shows that $q_0$ cannot be even.
\end{proof}

We proceed to complete the proof of Lemma \ref{keylem}.
It follows from Lemmas \ref{m0q5q0} and  \ref{q0lem} that $\frac{q}{5m_0}$ is an integer not less than $5$.
We also have $3\nmid\frac{q}{5m_0}$ since $3\nmid q$, and $5\nmid\frac{q}{5m_0}$ by Lemma \ref{Conclusion 5}, forcing $\frac{q}{5m_0}\in\{7, 8\}$ or $\frac{q}{5m_0}\geq11$.
Moreover, \begin{equation}\label{hat5m0}
 \widehat{5m_0}=\gcd(5m_0,\frac{q}{5})
=m_0\cdot\gcd(5,\frac{q}{5m_0})=m_0.
\end{equation}

By (\ref{lamdqd}) we have
\begin{equation} \label{q5m}
g(\frac{k q}{5}+m)=g(\widehat{m})(\frac{k q}{5\widehat{m}}+\frac{m}{\widehat{m}}\mid 5),\quad m\in\mathbb{Z},\ k=1,2.
\end{equation}
In particular, by (\ref{hat5m0}), for $k\in\{1, 2\}$,
\begin{equation} \label{q55m0}
  \begin{split}
      g(\frac{k q}{5}-5m_0)g(\frac{k q}{5}+5m_0)  & =g(m_0)(\frac{k q}{5m_0}-5\mid 5)g(m_0)(\frac{k q}{5m_0}+5\mid 5) \\
       & =((\frac{k q}{5m_0})^2\mid 5), \\
       & =1.
  \end{split}
\end{equation}
\noindent\textbf{Claim.}  $25\mid q$ and when $\frac{q}{5m_0}=7$ or $8$, one has $g(\frac{q}{25})\neq0$
and hence for $k\in\{1, 2\}$,
\begin{equation} \label{q5q25}
  \begin{split}
      g(\frac{k q}{5}-\frac{q}{25})g(\frac{k q}{5}+\frac{q}{25})  & =g(\frac{q}{25})(5k-1\mid 5)g(\frac{q}{25})(5k+1\mid 5) \\
       & =g(\frac{q}{25})^2(-1\mid 5) \\
       & =1.
  \end{split}
\end{equation}
For this, we need the following calculations: if $25\mid q$ then
$$\widehat{\frac{k q}{25}}=\gcd(\frac{k q}{25},\frac{q}{5})=\frac{q}{25}\cdot\gcd(k,5)=\frac{q}{25}$$ for $k\in\mathbb{N}$ with $5\nmid k$. Similarly, if $35\mid q$ then $\widehat{\frac{k q}{35}}=\frac{q}{35}$ for $k\in\mathbb{N}$ with $7\nmid k$; if $40\mid q$ then $\widehat{\frac{k q}{40}}=\frac{q}{40}$ for $k\in\mathbb{N}$ with $2\nmid k$. We also note that if $g(m)\neq0$ then $\widehat{m}\mid\frac{q}{5}$ and $\frac{q}{5\widehat{m}}\leq \frac{q}{5m_0}$.

Assume $\frac{q}{5m_0}=7$. For $m\in\mathbb{N}$ satisfying $g(m)\neq0$,
$\frac{q}{5\widehat{m}}=1, 2, 4, 5$ or $7$, and thus $\frac{q}{5}$   divides $4m, 5m$ or $7m$. For $m\in\mathbb{N}$ with $\frac{2q}{35}<m<\frac{3q}{35}$, we have
$\frac{q}{5}<4m<\frac{2q}{5}$ and $\frac{2q}{5}<7m<\frac{3q}{5}$. This shows that
there is no integer $m$ between $\frac{2q}{35}$ and  $\frac{3q}{35}$ such that
$\frac{q}{5}$ divides $4m$ or $7m$.
Since $$g(\frac{2q}{35})g(\frac{3q}{35})=g(\frac{q}{35})(2\mid 5)g(\frac{q}{35})(3\mid 5)=(6\mid 5)=1,$$ it follows from Lemma \ref{Conclusion 1} that $$\sum_{m=\frac{2q}{35}+1}^{\frac{3q}{35}-1}g(m)\neq0.$$ That is to say, there is an integer $m_1$ between $\frac{2q}{35}$ and  $\frac{3q}{35}$ such that $g(m_1)\neq0$ and
$\frac{q}{5}\mid 5m_1$. Since $\frac{q}{5}<5m_1<\frac{3q}{5}$ one has $m_1=\frac{2q}{25}$, which yields  $25\mid q$ and $g(\frac{q}{25})=g(\widehat{\frac{2q}{25}})\neq0$.

Assume $\frac{q}{5m_0}=8$. For $m\in\mathbb{N}$ satisfying $g(m)\neq0$,
$\frac{q}{5\widehat{m}}=1, 2, 4, 5, 7$ or $8$, and thus $\frac{q}{5}$   divides $5m, 7m$ or $8m$.
Consider the sets $$A_r=\{m\in\mathbb{Z}:\frac{9q}{40}<m<\frac{11q}{40},g(m)\neq0,\frac{q}{5}\mid rm\}, \quad r=5, 7, 8.$$
Since $$g(\frac{9q}{40})g(\frac{11q}{40})=g(\frac{q}{40})(9\mid 5)g(\frac{q}{40})(11\mid 5)=(99\mid 5)=1,$$ it follows from Lemma \ref{Conclusion 1} that $$\sum_{m\in A_5}g(m)+\sum_{m\in A_7}g(m)+\sum_{m\in A_8}g(m)=\sum_{m=\frac{9q}{40}+1}^{\frac{11q}{40}-1}g(m)\neq0.$$
By using an argument similar to that in the last paragraph, it suffices to prove
$$\sum_{m\in A_7}g(m)=\sum_{m\in A_8}g(m)=0.$$
Since $\frac{9q}{5}<8m<\frac{11q}{5}$, we see that if $A_8\neq\varnothing$ then it has exactly one element $m=\frac{q}{4}$. Therefore, $A_8=\varnothing$ by Lemma \ref{gq4}.
%
On the other hand, since $\frac{7q}{5}<7m<2q$, $A_7$ contains one or two elements in $\{\frac{8q}{35}, \frac{9q}{35}\}$, and in either case one has $35\mid q$.
From the identities
$$g(\frac{8q}{35})=g(\frac{q}{35})(8\mid 5)=-g(\frac{q}{35}),$$
$$g(\frac{9q}{35})=g(\frac{q}{35})(9\mid 5)=g(\frac{q}{35}),$$
we deduce that $A_7=\{\frac{8q}{35}, \frac{9q}{35}\}$, and then
$$\sum_{m\in A_7}g(m)=g(\frac{8q}{35})+g(\frac{9q}{35})=-g(\frac{q}{35})+g(\frac{q}{35})=0.$$ This proves the claim.

We have shown that
there exists $m_2\in\mathbb{N}$ with $m_2\leq\min\{5m_0,\frac{q}{11}\}$ such that
$$g(\frac{k q}{5}-m_2)g(\frac{k q}{5}+m_2)=1,\quad k=1,2.$$
Indeed, if $\frac{q}{5m_0}=7$ or $8$, one can take $m_2=\frac{q}{25}$ by (\ref{q5q25}),
while if $\frac{q}{5m_0}\geq11$ one can take $m_2=5m_0$
 by (\ref{q55m0}).
Put
$$m_3=\min\{m_0< m\leq m_2:g(\frac{k q}{5}-m)g(\frac{k q}{5}+m)=1,k=1,2\},$$
and
$$m_4=\max\{m_0\leq m<m_3:g(\widehat{m})\neq0\}.$$
Now we will show $5\nmid\frac{m_4}{\widehat{m_4}}$
and $5\nmid\frac{q}{5\widehat{m_4}}$.
If $5\mid\frac{m_4}{\widehat{m_4}}$ then
$$m_4\geq5\widehat{m_4}\geq5m_0\geq m_2\geq m_3>m_4,$$
which is impossible. A direct calculation gives
\begin{equation*}
  \begin{split}
      g(\frac{k q}{5}-m_4)g(\frac{k q}{5}+m_4)  & =g(\widehat{m_4})(\frac{k q}{5\widehat{m_4}}-\frac{m_4}{\widehat{m_4}}\mid 5)g(\widehat{m_4})(\frac{k q}{5\widehat{m_4}}+\frac{m_4}{\widehat{m_4}}\mid 5) \\
       & =g(\widehat{m_4})^2((\frac{k q}{5\widehat{m_4}})^2-(\frac{m_4}{\widehat{m_4}})^2\mid 5) \\
       & =((\frac{k q}{5\widehat{m_4}})^2-(\frac{m_4}{\widehat{m_4}})^2\mid 5)
  \end{split}
\end{equation*} for $k\in\{1, 2\}$. This shows that
if $5\mid\frac{q}{5\widehat{m_4}}$ then
$$g(\frac{k q}{5}-m_4)g(\frac{k q}{5}+m_4)=(-(\frac{m_4}{\widehat{m_4}})^2\mid 5)=1,
\quad k=1,2,$$  contradicting with the definition of $m_3$. Therefore, $5\nmid\frac{m_4}{\widehat{m_4}}$
and $5\nmid\frac{q}{5\widehat{m_4}}$.

By Lemma \ref{pclem}, there exists $k_0\in\{1,2\}$ such that
\begin{equation}\label{k0q5}
 g(\frac{k_0 q}{5}-m_4)g(\frac{k_0 q}{5}+m_4)
=((\frac{k_0 q}{5\widehat{m_4}})^2-(\frac{m_4}{\widehat{m_4}})^2\mid 5)=-1.
\end{equation}
Since
$m_3\leq m_2\leq\frac{q}{11}$ and $q\geq20$, one has
 $$1\leq\frac{q}{5}-\frac{q}{11}-1\leq\frac{k_0 q}{5}-m_3-1\leq \frac{k_0 q}{5}+m_3\leq\frac{2q}{5}+\frac{q}{11}<\frac{q}{2}.$$
 Moreover, for any integer $m$ between $m_4$ and $m_3$ we have $g(\widehat{m})=0$, and thus by (\ref{q5m}), $$g(\frac{k_0 q}{5}-m)=g(\frac{k_0 q}{5}+m)=0.$$
It follows that $$|g(\frac{k_0 q}{5}-m_3)+g(\frac{k_0 q}{5}-m_4)|
=|S(\frac{k_0 q}{5}-m_4)-S(\frac{k_0 q}{5}-m_3-1)|\leq1,$$ and $$|g(\frac{k_0 q}{5}+m_4)+g(\frac{k_0 q}{5}+m_3)|
=|S(\frac{k_0 q}{5}+m_3)-S(\frac{k_0 q}{5}+m_4-1)|\leq1.$$ This together with (\ref{k0q5}) yields $g(\frac{k_0 q}{5}-m_3)=0$ or $g(\frac{k_0 q}{5}+m_3)=0$,
contradicting with the definition of $m_3$.
Hence we  complete the proof of Lemma \ref{keylem}.
\vskip2mm

It remains to prove Lemma \ref{m0q5q0}.

\noindent\textbf{Proof of Lemma \ref{m0q5q0}.}
Assume conversely $m_0>\frac{q}{5q_0}$, which is equivalent to that $m_0= \frac{q}{q_0}, \frac{q}{2q_0}$ or $\frac{q}{4q_0}$ since $m_0\mid\frac{q}{q_0}$ and $3\nmid q$.
It is clear that each divisor $d$ of $\frac{q}{q_0}$ satisfying $d\geq m_0$ is necessarily divided by $m_0$. Thus, $m_0\mid m$ for every $m\in\mathbb{N}$ with $g(m)\neq0$ (since $g(\widehat{m})\neq0$, the definition of $m_0$ gives $\widehat{m}\geq m_0$).
Combining this with the assumption that $$\gcd\left(\{m\in\mathbb{Z}:g(m)\neq0\}\cup\{\frac{q}{2}\}\right)=1,$$ one has $\gcd(m_0,\frac{q}{2})=1$.
A direct calculation gives
$$
        \gcd(m_0,\frac{q}{2})=\begin{cases}
                        m_0, &  \frac{q}{m_0}\ \mathrm{is}\ \mathrm{even}; \\
                        \frac{m_0}{2}, & \frac{q}{m_0}\ \mathrm{is}\ \mathrm{odd}.
                      \end{cases}
       $$ Then $m_0=1$ if $m_0=\frac{q}{2q_0},\ \frac{q}{4q_0}$ or $q_0$ is even, while
       $m_0=2$ if $m_0=\frac{q}{q_0}$ and  $q_0$ is odd.
 Now we have the following five cases.
\vskip2mm
\noindent\textbf{Case 1.}  $m_0=\frac{q}{q_0}$ and $q_0$ is even.

In this case, $m_0=1$ and $$g(m)=g(\widehat{m})\psi(\frac{m}{\widehat{m}})=g(1)\psi(m)=\psi(m), \quad m\in\mathbb{Z}.$$
Since $q_0=q\geq14$ and $3\nmid q_0$, we have
$q_0=2^ls$, where $l=2$ or $3$, and $s\geq5$ is odd.
Moreover, if $s=5$ then  $\psi(m)=\psi_{8,2}^{\mathrm{pr}}(m)\cdot(m \mid 5)$ (see Remark \ref{2ks}).  In this case, one has $$\psi(3)=\psi_{8,2}^{\mathrm{pr}}(3)\cdot(3 \mid 5)=1,$$ and then
$$g(1)g(3)=\psi(1)\psi(3)=1.$$ It follows from Lemma \ref{Conclusion 1} that $\psi(2)=g(2)\neq0$,  which contradicts with the fact $2\mid q_0$.
From this,  we deduce that $s\geq7$, i.e., $q_0$ has a prime divisor $p\geq7$. By Lemma \ref{psi1lem} (2), there exists $k\in\mathbb{N}$ with $1\leq k\leq\frac{p-1}{2}$, such that $\psi((\frac{k q_0}{p})^2-1)=1$, and then $$g(\frac{k q_0}{p}-1)g(\frac{k q_0}{p}+1)
=\psi(\frac{k q_0}{p}-1)\psi(\frac{k q_0}{p}+1)
=\psi((\frac{k q_0}{p})^2-1)=1.$$
Again by Lemma \ref{Conclusion 1}, $\psi(\frac{k q_0}{p})=g(\frac{k q_0}{p})\neq0$,
which is also a contradiction since
$$\gcd(\frac{k q_0}{p},q_0)\geq\frac{q_0}{p}>1.$$
\vskip2mm
\noindent\textbf{Case 2.}  $m_0=\frac{q}{q_0}$ and $q_0$ is odd.

 In this case, $m_0=2$, $g(1)=0$,  and \begin{equation}\label{g(m)=0foroddm}
                                g(m)=g(\widehat{m})\psi(\frac{m}{\widehat{m}})=\begin{cases}
                        g(1)\psi(m)=0, &  m\ \mathrm{is}\ \mathrm{odd}; \\
                        g(2)\psi(\frac{m}{2})=\psi(\frac{m}{2}), & m\ \mathrm{is}\ \mathrm{even}.
                      \end{cases}
                              \end{equation}
        Moreover, $q_0\geq13$ since $q\geq14$ and $q_0\equiv 1\,(\mathrm{mod}\,4)$ (see Remark \ref{2ks}).
 Since $\psi(1)=\psi(4)=1$ and $|\psi(2)|=|\psi(3)|=1$,
 there exists $c\in\{1, 2, 3\}$ such that $\psi(c)\psi(c+1)=1$.
 Therefore $$g(2c)g(2c+2)=\psi(c)\psi(c+1)=1,$$ forcing $g(2c+1)\neq0$ by Lemma \ref{Conclusion 1}.
However, this  contradicts with (\ref{g(m)=0foroddm}).
\vskip2mm
\noindent\textbf{Case 3.}  $m_0=\frac{q}{2q_0}$ or $\frac{q}{4q_0}$, and $q_0$ is even.

In this case, $m_0=1$ and $4\mid q_0$ (see Remark \ref{2ks}). This gives $8\mid q$ since $\frac{q}{q_0}=2$ or $4$, and thus $\frac{q}{4}\pm1$ is odd.
Hence $\widehat{\frac{q}{4}\pm1}=1$ and
$g(\frac{q}{4}\pm1)=g(1)\psi(\frac{q}{4}\pm1)=\psi(\frac{q}{4}\pm1)$.
Since $$\psi(\frac{q}{4}-1)=\psi(1-\frac{q}{4})=\psi(1-\frac{q}{4}+\frac{q}{2})
=\psi(\frac{q}{4}+1),$$ and $\gcd(\frac{q}{4}\pm1,q_0)=1$,
we have $g(\frac{q}{4}-1)g(\frac{q}{4}+1)=1$, forcing $g(\frac{q}{4})\neq0$ by Lemma \ref{Conclusion 1}. This  contradicts with Lemma \ref{gq4}.
\vskip2mm
\noindent\textbf{Case 4.}  $m_0=\frac{q}{2q_0}$ and $q_0$ is odd.

In this case, $m_0=1$, $\frac{q}{q_0}=2$, and similar to Case 2, we  have
$q_0\geq13$ and
$$
        g(m)=\begin{cases}
                        \psi(m), &  m\ \mathrm{is}\ \mathrm{odd}; \\
                        g(2)\psi(\frac{m}{2}), & m\ \mathrm{is}\ \mathrm{even}.
                      \end{cases}
       $$
Again take $c\in\{1, 2, 3\}$ satisfying $\psi(c)\psi(c+1)=1$.
Then $$g(q_0-2c-2)g(q_0-2c)=\psi(q_0-2c-2)\psi(q_0-2c)=\psi^2(-2)\psi(c)\psi(c+1)=1,$$
forcing $g(q_0-2c-1)\neq0$ by Lemma \ref{Conclusion 1}, and thus $g(2)\neq0$ since $$g(q_0-2c-1)=g(2)
\psi(\frac{q_0-2c-1}{2}).$$ Let $p$ be the smallest odd prime of $q_0$. For any $m\in\mathbb{N}$ with $m<p$, we have $g(m)\neq0$ since
$\psi(m)\neq0$. Moreover, $$|g(m)+g(m+1)|=|S(m+1)-S(m-1)|\leq1$$ for $1\leq m\leq p-2$.
It follows from Lemma \ref{abclem} (2) that $$g(m)=(-1)^{m-1}g(1)=(-1)^{m-1},\quad m=1,2,\cdots,p-1.$$
In particular, the identity $g(4)=g(2)\psi(2)$ gives $\psi(2)=1$.
We further have $\psi(\frac{p+1}{2})=1$. In fact,
if $\frac{p+1}{2}$ is odd then $\psi(\frac{p+1}{2})=g(\frac{p+1}{2})=1$; if $\frac{p+1}{2}$ is even,
then $$g(\frac{p+1}{2})=g(2)\psi(\frac{p+1}{4})=-\psi(\frac{p+1}{4}),$$ and thus $$\psi(\frac{p+1}{2})=\psi(2)\psi(\frac{p+1}{4})=-\psi(2)g(\frac{p+1}{2})=1.$$
It follows from this claim that $$g(p+1)=g(2)\psi(\frac{p+1}{2})=-1=g(p-1),$$
forcing $\psi(p)=g(p)=1$ by Lemma \ref{Conclusion 1}. This contradict with the fact $\gcd(p,q_0)=p>1$.

\vskip2mm
\noindent\textbf{Case 5.}  $m_0=\frac{q}{4q_0}$ and $q_0$ is odd.

In this case, $m_0=1$, $\frac{q}{q_0}=4$ and $q_0\equiv 1\,(\mathrm{mod}\,4)$.
This yields $$\widehat{q_0}=\widehat{q_0\pm2}=\widehat{q_0\pm4}=1,$$
$$\widehat{q_0-3}=\widehat{q_0+1}=2$$ and
$$\widehat{q_0-1}=\widehat{q_0+3}=4.$$
It follows that
$$g(q_0\pm2)=g(1)\psi(q_0\pm2)=\psi(\pm2)=\psi(2),$$
$$g(q_0\pm4)=g(1)\psi(q_0\pm4)=\psi(\pm4)=1,$$
which shows $g(q_0-3)=g(q_0+3)$ by Lemma \ref{Conclusion 1}. Moreover,
$$g(q_0-3)=g(2)\psi(\frac{q_0-3}{2})=g(2)\psi(\frac{q_0-3}{2})\psi(4)
=g(2)\psi(2q_0-6)=g(2)\psi(6),$$
$$g(q_0-1)=g(4)\psi(\frac{q_0-1}{4})=g(4)\psi(\frac{q_0-1}{4})\psi(4)
=g(2)\psi(q_0-1)=g(4),$$
and similarly, $g(q_0+1)=g(2)\psi(2)$, $g(q_0+3)=g(4)\psi(3)$.
Therefore $g(2)\psi(2)\psi(3)=g(4)\psi(3)$, forcing $g(q_0-1)=g(q_0+1)$.
Since $g(q_0-2)g(q_0+2)=1$, again by Lemma \ref{Conclusion 1} we have $g(q_0)\neq0$.
However, $$g(q_0)=g(\widehat{q_0})\psi(\frac{q_0}{\widehat{q_0}})=g(1)\psi(q_0)=0.$$
This completes the proof.
 $\hfill \square $
\vskip2mm

\section{The number of  component intervals of $V$}

Using the Euler totient function $\phi$, we obtain a formula for the number of  component intervals of $V$ with $\mathbf{1}_V\in\mathcal{C}$. As a consequence, it will be shown that when $V$ is a subinterval of $(0,1)$ that satisfies $\mathbf{1}_V\in\mathcal{C}$, the least common denominator $t_V$ is not larger than $6$.
\begin{thm}\label{num of component}
  Let $V,q$ and $g$ be given as in (\ref{Vdisplay}), (\ref{q}) and (\ref{g(m)}). Suppose $\mathbf{1}_V\in\mathcal{C}$.
  \begin{itemize}
    \item [(1)] The set of all boundary points of $V$ in $(0,1)$ is
    \begin{equation} \label{set of boundary}\bigsqcup_{\substack{n\geq3 \\ J_V(\frac{2}{n})\neq0}}\Big\{\frac{2m}{n}:1\leq m<\frac{n}{2},\gcd(m,n)=1\Big\},\end{equation}
     and the number of component intervals of $V$ is
     \begin{equation}\label{N=}
       N=\frac{\mathbf{1}_V(0^+)+\mathbf{1}_V(1^-)}{2}+\frac{1}{4}\, \sum_{\substack{n\geq3 \\ J_V(\frac{2}{n})\neq0}}\phi(n).
     \end{equation}
    \item [(2)] If $g\in E_q$ then
    $$2(\mathbf{1}_V(0^+)+\mathbf{1}_V(1^-))+\sum_{\substack{n\geq3 \\ J_V(\frac{2}{n})=1}}\phi(n)=\sum_{\substack{n\geq3 \\ J_V(\frac{2}{n})=-1}}\phi(n).$$
  \end{itemize}
\end{thm}
\begin{proof}
Let $M$ be the number of  boundary points of $V$ in $(0,1)$. Then
\begin{equation}\label{NM}
N=\frac{1}{2}(\mathbf{1}_V(0^+)+\mathbf{1}_V(1^-)+M).
\end{equation}
We also put $$F_n=\{m:1\leq m<\frac{n}{2},\gcd(m,n)=1\},\quad n=1,2,\cdots.$$
It is easy to see that to any rational number $r\in(0,1)$,  there corresponds a unique positive integer $n$ with $n\geq3$, such that $r=\frac{2m}{n}$ for some $m\in F_n$.

(1)
By Lemma \ref{gatb},  $|J_V(\frac{2m}{n})|=|J_V(\frac{2}{n})|$ for any
relatively prime positive integers $m,n$.
Thus for $n\geq3$ and $m\in F_n$,  $\frac{2m}{n}$  is a
boundary point of $V$ in $(0,1)$ if and only if $J_V(\frac{2}{n})\neq0$.
Therefore,
the set of all boundary points of $V$ in $(0,1)$ coincides with the set given in (\ref{set of boundary}), and
$$M=\sum_{\substack{n\geq3 \\ J_\varphi(\frac{2}{n})\neq0 }}\sharp F_n
=\frac{1}{2}\sum_{\substack{n\geq3 \\ J_\varphi(\frac{2}{n})\neq0 }} \phi(n).$$ This together with (\ref{NM}) gives (\ref{N=}).

\vskip2mm

(2) Assume $g\in E_q$. Then by Lemma \ref{gatb},  $J_V(\frac{2m}{n})=J_V(\frac{2}{n})$ for any
relatively prime positive integers $m,n$. Thus for $n\geq3$ and $m\in F_n$,  $\frac{2m}{n}$  is a  left endpoint of some component interval of $V$ if and only if $J_V(\frac{2}{n})=1$. It follows that
$$N=\mathbf{1}_V(0^+)+\sum_{\substack{n\geq3 \\ J_\varphi(\frac{2}{n})=1 }}\sharp F_n
=\mathbf{1}_V(0^+)+\frac{1}{2}\sum_{\substack{n\geq3 \\ J_\varphi(\frac{2}{n})=1 }} \phi(n),$$
which proves (2) by (1).
\end{proof}

 We record the following corollary.
Setting $$t(n)=\prod_{\substack{p\leq 4n+1 \\ p\,\text{prime}}}p^{[\log_p6n]},\quad n=1,2,\cdots,$$
where $[\log_p6n]$ is the integer part of $\log_p6n$,
we have

\begin{cor} \label{cor of jump discon}
Let $V$ be given as in (\ref{Vdisplay}). If $\mathbf{1}_V\in\mathcal{C}$ then $t_V\mid t(N)$, i.e., every boundary point of $V$ has form $\frac{s}{t(N)}$ for some integer $s$.
\end{cor}
\begin{proof}
To reach a contradiction, assume conversely that $V$ has a boundary point $\frac{s_0}{t_0}\in(0,1)$ with $t_0\nmid t(N)$, where $s_0,t_0$ is a pair of relatively prime positive integers.
Then there is a prime divisor $p$ of $t_0$, such that either $p>4N+1$ or $p^{[\log_p6N]+1}|t_0$.
In either case, we would have \begin{equation}\label{>4N}
                                \phi(t_0\cdot\gcd(t_0,2))>4N.
                              \end{equation}  In fact,
if there is a prime divisor $p$ of $t_0$, such that $p>4N+1$,
then $$\phi(t_0)\geq\phi(p)=p-1>4N;$$
if $2^{[\log_26N]}+1|t_0$, then
$$\phi(t_0)\geq\phi(2^{[\log_26N]+1})=2^{[\log_26N]}
>2N;$$
if there is an odd prime divisor $p$ of $t_0$, such that $p^{[\log_p6N]+1}|t_0$, then
$$\phi(t_0)\geq\phi(p^{[\log_p6N]+1})=p^{[\log_p6N]}(p-1)
\geq\frac{2}{3}p^{[\log_p6N]+1}>4N.$$
Therefore, when $t_0$ is odd, $\phi(t_0)>4N$;
when $t_0$ is even, $\phi(t_0)>N$ and thus $\phi(2t_0)=2\phi(t_0)>4N$.

Applying Proposition \ref{gatb} (2), we see that
$$\frac{2}{t_0\cdot\frac{2}{\gcd(s_0,2)}}=\frac{\gcd(s_0,2)}{t_0}$$ is also a boundary point of $V$.
Then by Theorem \ref{num of component} (1) and (\ref{>4N}),
$$\phi(t_0)\leq\phi(t_0\cdot\frac{2}{\gcd(s_0,2)})
\leq4N<\phi(t_0\cdot\gcd(t_0,2))\leq\phi(2t_0).$$
This implies that both $s_0$ and $t_0$ are even, which contradicts with the assumption that
$s_0$ and $t_0$ are relatively prime.
The proof is complete.
\end{proof}

  Corollary \ref{cor of jump discon} leads to an upper bound  for open subsets $V$ with $\mathbf{1}_V\in\mathcal{C}$ when the number of component intervals of $V$ is restricted. More precisely,
let $\mathcal{V}_n\ (n\in\mathbb{N})$ denote  the set of all non-degenerated open subsets $V$ of $(0,1)$  which have  at most $n$  component intervals with rational boundary points. It follows from Corollary \ref{cor of jump discon} that  there are at most ${\binom {2n} {t(n)+1}}$ elements in
$$\big\{V\in \mathcal{V}_n: \mathbf{1}_V\in\mathcal{C}\big\}.$$

We end this section with the following corollary.

\begin{cor} \label{t<7}
 Let $V$  be given as in (\ref{Vdisplay}). If $N=1$ (i.e., $V$ is a subinterval of $(0,1)$) and $\mathbf{1}_V\in\mathcal{C}$, then $t_V\leq6$.
\end{cor}
\begin{proof}
Since $t(1)=60$, Corollary \ref{cor of jump discon} gives $t_V\mid60$. Thus, if $4\nmid t_V$ and $5\nmid t_V$ then $t_V\mid 6$.

Assume $5\mid t_V$. Then by Theorem \ref{num of component} (1), there exists
$l\in\mathbb{N}$ such that $5\mid l$ and $J_V(\frac{2}{l})\neq0$. Moreover,
$$4\geq\sum_{\substack{n\geq3 \\ J_V(\frac{2}{n})\neq0}}\phi(n)\geq\phi(l)\geq\phi(5)=4,$$
which yields that $l=5$ or $10$, and that for $n\geq3$, $J_V(\frac{2}{n})\neq0$ only when $n=l$. Therefore, $t_V=5\leq6$.

Assume $4\mid t_V$. Similarly, there exists
$l'\in\mathbb{N}$ such that $8\mid l'$, $J_V(\frac{2}{l'})\neq0$ and $$4\geq\sum_{\substack{n\geq3 \\ J_V(\frac{2}{n})\neq0}}\phi(n)\geq\phi(l')\geq\phi(8)=4,$$
which gives $l'=8$ and $t_V=4\leq 6$.
 \end{proof}

\section{The case $t_V\leq6$}

In  this section, we explicitly determine $V$ in case $t_V\leq6$. This together with Corollary \ref{t<7} gives Theorem \ref{interval}.
 \begin{thm} \label{1to6}
   Let $V$ be given as in (\ref{Vdisplay}).
\begin{itemize}
  \item[(1)] When $t_V=1,2$,  $\mathbf{1}_{(0,1)},\mathbf{1}_{(0,\frac{1}{2})},\mathbf{1}_{(\frac{1}{2},1)}\in\mathcal{C}$;
  \item[(2)] When $t_V=3$, $\mathbf{1}_V\in\mathcal{C}$ if and only if $V=(0,\frac{2}{3}),(\frac{1}{3},1),(\frac{1}{3},\frac{2}{3})$ or $(0,\frac{1}{3})\cup(\frac{2}{3},1)$;
  \item[(3)] When $t_V=4$, $\mathbf{1}_V\in\mathcal{C}$ if and only if $V=(\frac{1}{4},\frac{3}{4}),
      (0,\frac{1}{4})\cup(\frac{1}{2},\frac{3}{4})$ or $(\frac{1}{4},\frac{1}{2})\cup(\frac{3}{4},1)$;
  \item[(4)] When $t_V=5$, $\mathbf{1}_V\in\mathcal{C}$ if and only if $V=(\frac{1}{5},\frac{3}{5}),
      (\frac{2}{5},\frac{4}{5}),(\frac{1}{5},\frac{2}{5})
      \cup(\frac{3}{5},\frac{4}{5})$ or
      $(0,\frac{1}{5})\cup(\frac{2}{5},\frac{3}{5})\cup
      (\frac{4}{5},1)$.
  \item[(5)] When $t_V=6$, $\mathbf{1}_V\in\mathcal{C}$ if and only if $V=(\frac{1}{6},\frac{5}{6}),
      (0,\frac{1}{6})\cup(\frac{1}{3},\frac{5}{6}),
      (0,\frac{1}{3})\cup(\frac{1}{2},\frac{2}{3}),
      (0,\frac{1}{3})\cup(\frac{1}{2},1),
      (0,\frac{1}{2})\cup(\frac{2}{3},1),
      (\frac{1}{6},\frac{2}{3})\cup(\frac{5}{6},1),
      (0,\frac{1}{6})\cup(\frac{1}{3},\frac{1}{2})\cup(\frac{2}{3},\frac{5}{6})$
      or $(\frac{1}{6},\frac{1}{3})\cup(\frac{1}{2},\frac{2}{3})\cup(\frac{5}{6},1)$.
\end{itemize}
 \end{thm}

To prove Theorem \ref{1to6}, we need to establish the following.

\begin{lem} Let $V$ and $g$ be given as in (\ref{Vdisplay}) and (\ref{g(m)}).
  \begin{itemize}
     \item [(1)] If $t_V=4$
     and $g\in E_8$, then $V=V_{4,0}$ or $V_{4,1}$.
     \item [(2)] If $t_V=5$
     and $g\in E_{10}$, then $V=V_{5,0}$ or $V_{5,1}$.
   \end{itemize}
\end{lem}
\begin{proof}
  (1) Since $a:=g(1)=g(3)\neq0$, it follows from Lemma \ref{Conclusion 1} that $g(2)=-a$. Thus, $V=V_{4,1}$ when $a=1$; $V=V_{4,0}$ when $a=-1$.
  \vskip2mm
   \noindent(2) Note that $a:=g(1)=g(3)$, $b:=g(2)=g(4)$, and at least one of $a$ and $b$ is not equal to $0$. Then Lemma \ref{Conclusion 1} yields $ab=-1$, which completes the proof.
\end{proof}

\begin{lem}\label{t=36} Let $V,g$ and $R$ be given as in (\ref{Vdisplay}), (\ref{g(m)}) and (\ref{R}).
  \begin{itemize}
     \item [(1)] If $t_V=3$
     and $g\in E_6$, then $R(z_1,z_2)$ has no zeros in $\mathbb{D}^{2}$ if and only if $V=V_{3,0}, V_{3,1}, (0,\frac{2}{3})$ or $(\frac{1}{3},1)$.
     \item [(2)] If $t_V=6$
     and $g\in E_{12}$, then $R(z_1,z_2)$ has no zeros in $\mathbb{D}^{2}$ if and only if $V=V_{6,0}, V_{6,1}, (0,\frac{1}{6})\cup(\frac{1}{3},\frac{5}{6}),
      (0,\frac{1}{3})\cup(\frac{1}{2},\frac{2}{3}),
      (0,\frac{1}{3})\cup(\frac{1}{2},1),
      (0,\frac{1}{2})\cup(\frac{2}{3},1)$ or
      $(\frac{1}{6},\frac{2}{3})\cup(\frac{5}{6},1)$.
   \end{itemize}
\end{lem}
The proof of Lemma \ref{t=36} is due to a direct calculation. We refer to Lemma \ref{t=36'} in Appendix for the details of the calculation.

\vskip2mm
\noindent\textbf{Proof of Theorem \ref{1to6}.}
(1) has been shown in Examples \ref{01} and \ref{Vt0Vt1}. For the rest of the proof, suppose $\mathbf{1}_V\in\mathcal{C}$ and put $g(m)=J_V(\frac{m}{t_V})\ (m\in\mathbb{Z})$. According to Corollary \ref{fg}, we can assume $g\in E_{q,\psi}$ for some $\psi\in\mathcal{P}_q$, where $q=2t_V$. Let $q_0$ denote the modulus of $\psi$.

With Theorem \ref{Eq}, the case  $q_0=1$ is completely solved due to the previous two lemmas. In particular, we have proved (2) (see Remark \ref{2ks} for possible values of $q_0$).
From now on, we also assume $q_0>1$.
%


   \vskip2mm
   \noindent(3) The case $t_V=4$: since $q_0>1$ we have $q_0=8$, $\psi=\psi_{8,2}^{\mathrm{pr}}$.
   Hence,
    $$g(3)=g(1)\psi(3)=-g(1),$$ and by Theorem \ref{criterion}, $g(0)=g(2)=g(4)=0$. This together with Theorem \ref{q=q02} implies that when $q_0>1$, $\mathbf{1}_V\in\mathcal{C}$ if and only if $V=(\frac{1}{4},\frac{3}{4})$.
   \vskip2mm

   \noindent(4) The case $t_V=5$: since $q_0>1$ we have   $q_0=5$, $\psi(n)=(n\mid 5)$.
  Hence, $$g(3)=g(1)\psi(3)=-g(1),$$ $$g(4)=g(2)\psi(2)=-g(2),$$ and by Theorem \ref{criterion},  $g(0)=g(5)=0$. This together with Theorem \ref{specriterion} implies that when $q_0>1$,  $\mathbf{1}_V\in\mathcal{C}$ if and only if $V=(\frac{1}{5},\frac{3}{5})$
      or $(\frac{2}{5},\frac{4}{5})$.
   \vskip2mm
   \noindent(5) The case $t_V=6$: since $q_0>1$ we have   $q_0=12$, $\psi(n)=\psi_4^{\mathrm{pr}}(n)\cdot(n\mid 3)$.
Hence, $$g(5)=g(1)\psi(5)=-g(1),$$ and by Theorem \ref{criterion}, $g(0)=g(2)=g(3)=g(4)=g(6)=0$. This together with Theorem \ref{q=q02} implies that when $q_0>1$, $\mathbf{1}_V\in\mathcal{C}$ if and only if $V=(\frac{1}{6},\frac{5}{6})$.
 $\hfill \square $
\vskip2mm


\section{When $V=V_{t_V,0}$ or $V_{t_V,1}$?}
In this section, we will prove Theorems \ref{Vtthm} and \ref{pkthm}, which are restated below.
\vskip2mm

\noindent\textbf{Theorem \ref{Vtthm}.}
   \textit{Let $V$  be given  as in (\ref{Vdisplay}). Suppose that $t_V\geq7$, $3\nmid t_V$, and there exists
 a boundary point $\frac{s}{t_V}$ of $V$ with   $\gcd(s,t_V)=1$. Then
   $\mathbf{1}_V\in\mathcal{C}$ if and only if
  $V=V_{t_V,0}$ or $V_{t_V,1}$.}
\vskip2mm

\noindent\textbf{Theorem \ref{pkthm}.}
   \textit{Let $V$  be given as in (\ref{Vdisplay}). Suppose that $t_V\geq7$ and $t_V=p^k$ for some prime $p$ and some $k\in\mathbb{N}$. Then
   $\mathbf{1}_V\in\mathcal{C}$ if and only if
  $V=V_{p^k,0}$ or $V_{p^k,1}$.}
  \vskip2mm

Neither of the two assumptions
\begin{itemize}
  \item [(1)] $3\nmid t_V$;
  \item [(2)] there exists
 a boundary point $\frac{s}{t_V}$ of $V$ with   $\gcd(s,t_V)=1$
\end{itemize}
in Theorem \ref{Vtthm} can be dropped. This can be illustrated by the following  examples.
\begin{exam}\label{Vtexam1}
  For $V=(\frac{1}{15},\frac{7}{15})\cup(\frac{11}{15},\frac{13}{15})$, $W=(\frac{2}{15},\frac{4}{15})\cup(\frac{8}{15},\frac{14}{15})$ and $m\in\mathbb{Z}$, put $g_V(m)=J_V(\frac{m}{15})$ and $g_W(m)=J_W(\frac{m}{15})$. Then $g_V, g_W\in E_{30, \psi}$, where $\psi(n)=(n\mid 5)$. It follows from Theorem \ref{specriterion} that $\mathbf{1}_V,\mathbf{1}_W\in\mathcal{C}$.
\end{exam}

\begin{exam}\label{Vtexam2}
  Suppose that $p_1, p_2$ are twin primes ($p_1=p_2+2$), and set $$V=(\frac{1}{p_1},\frac{1}{p_2})\cup(\frac{3}{p_1},\frac{3}{p_2})\cup\cdots
     \cup(\frac{2m-1}{p_1},\frac{2m-1}{p_2})\cup\cdots\cup(\frac{p_1-2}{p_1},1)$$
     and $$W=(0,\frac{2}{p_1})\cup(\frac{2}{p_2},\frac{4}{p_1})\cup\cdots\cup(\frac{2m-2}{p_2},\frac{2m}{p_1})
     \cup\cdots\cup(\frac{p_2-1}{p_2},\frac{p_1-1}{p_1}).$$
     Then $\mathbf{1}_V,\mathbf{1}_W\in\mathcal{C}$.

     In fact, for $m\in\mathbb{Z}$, put $g_V(m)=J_V(\frac{m}{p_1p_2})$ and $g_W(m)=J_W(\frac{m}{p_1p_2})$. Then $g_V,g_W\in E_{2p_1p_2}$. By a direct calculation,
     \begin{equation*}
       \begin{split}
          R_V(\mathbf{z}) & =\sum_{d\mid 2p_1p_2}g_V(\frac{2p_1p_2}{d})M_d(\mathbf{z}) \\
            & =g(p_1p_2)M_2(z)+g(p_2)M_{2p_1}(z,\lambda)+g(p_1)M_{2p_2}(z,\mu) \\
            & =-2M_2(z)+M_{2p_1}(z,\lambda)-M_{2p_2}(z,\mu) \\
            & =M_2(z)(-2+M_{p_1}(\lambda)-M_{p_2}(\mu)) \\
            & =(z-1)(-2+\lambda-\mu),
       \end{split}
     \end{equation*}
          \begin{equation*}
       \begin{split}
          R_W(\mathbf{z}) & =\sum_{d\mid 2p_1p_2}g_W(\frac{2p_1p_2}{d})M_d(\mathbf{z}) \\
            & =g(2p_1p_2)+g(2p_2)M_{p_1}(\lambda)+g(2p_1)M_{p_2}(\mu) \\
            & =2-M_{p_1}(z,\lambda)+M_{p_2}(z,\mu) \\
            & =2-\lambda+\mu.
       \end{split}
     \end{equation*}
  Both $R_V$ and $R_W$   has no zeros in $\mathbb{D}^{S_{2p_1p_2}}$. It follows from Theorem \ref{Eq} that $\mathbf{1}_V,\mathbf{1}_W\in\mathcal{C}$.
\end{exam}

Similarly, one can obtain the following two examples.

\begin{exam}\label{Vtexam3}
   Suppose that $p_1, p_2$ are two primes satisfying $p_1=2p_2-1$, and set
     $$V=(0,\frac{1}{p_2})\cup(\frac{2}{p_1},\frac{2}{p_2})\cup\cdots\cup(\frac{2m-2}{p_1},\frac{m}{p_2})
     \cup\cdots\cup(\frac{p_1-1}{p_1},1).$$
     Then $\mathbf{1}_V\in\mathcal{C}$.
\end{exam}

\begin{exam}\label{Vtexam4}
  Suppose that $p_1, p_2$ are two primes satisfying $p_1=2p_2+1$, and set
     $$V=(0,\frac{2}{p_1})\cup(\frac{1}{p_2},\frac{4}{p_1})\cup\cdots\cup(\frac{m-1}{p_2},\frac{2m}{p_1})
     \cup\cdots\cup(\frac{p_2-1}{p_2},\frac{p_1-1}{p_1}).$$
     Then $\mathbf{1}_V\in\mathcal{C}$.
\end{exam}

We first prove Theorem \ref{Vtthm} and then prove Theorem \ref{pkthm}.
\vskip2mm
\noindent\textbf{Proof of Theorem \ref{Vtthm}.}
We only need to prove  the necessity. For this, assume $\mathbf{1}_V\in\mathcal{C}$  and for $m\in\mathbb{Z}$, set $\widehat{m}=\gcd(m,2t_V)$,
$g(m)=J_V(\frac{m}{t_V})$.  Moreover,
$g(m_1)=g(m_2)$ whenever $\widehat{m_1}=\widehat{m_2}$ by Theorem \ref{criterion}.
In particular, since $3\nmid t_V$ one has
$$g(3m)=g(m)=g(\widehat{m}),\quad m\in\mathbb{Z}.$$
 It suffices to show $g(m)=(-1)^{m-1}g(1)$ for $1\leq m\leq t_V-1$. According to Lemma \ref{abclem} (2), this in turn reduces to showing
$g(m)\neq0$ for $1\leq m\leq t_V-1$.

Assume conversely that there exists $m\in\mathbb{N}$ with $m<t_V$ such that  $g(m)=0$.
Take $m_0$ to be the smallest positive integer among those $m$'s satisfying $g(m)=0$.  We claim that $4\leq m_0\leq\frac{t_V}{2}$ and $3\nmid m_0$. On one hand, the identity $g(\widehat{m_0})=g(m_0)=0$ implies that $\widehat{m_0}=m_0$, i.e., $m_0\mid 2t_V$. Since $m_0<t_V$ and $3\nmid t_V$, we have $m_0\leq\frac{t_V}{2}$ and $3\nmid m_0$. On the other hand, since
$$\widehat{s_0}=\gcd(s_0,2t_V)=\gcd(s_0,2),$$
it follows that $g(\gcd(s_0,2))=g(s_0)\neq0$.
If $s_0$ is odd, then $g(3)=g(1)\neq0$, forcing $g(2)\neq0$ by Lemma \ref{Conclusion 1}. If $s_0$ is even, then $t_V$ is odd and $\widehat{4}=2$. This yields that
$g(4)=g(2)\neq0$, and thus $g(1)=g(3)\neq0$ by Lemma \ref{Conclusion 1}. In either case, $g(m)\neq0$ for $m\in\{1,2,3\}$, which gives $m_0\geq4$.

By Lemma \ref{abclem} (2), \begin{equation}\label{g(m)=(-1)m-1g(1)}
 g(m)=(-1)^{m-1}g(1),\quad 1\leq m\leq m_0-1.
\end{equation} From this, we can deduce that
$m_0\equiv 1\,(\mathrm{mod}\,3)$. In fact, if $m_0\equiv 2\,(\mathrm{mod}\,3)$
then $$g(m_0+1)=g(\frac{m_0+1}{3})
=(-1)^{\frac{m_0-2}{3}}g(1)=(-1)^{m_0-2}g(1)=g(m_0-1)\neq0,$$
which would yield $g(m_0)\neq0$ by Lemma \ref{Conclusion 1}.
Hence,
$$g(m_0+2)=g(\frac{m_0+2}{3})
=(-1)^{\frac{m_0-1}{3}}g(1)=(-1)^{m_0-1}g(1)=-g(m_0-1),$$
forcing $g(m_0+1)=g(m_0)+g(m_0+1)=0$ by Lemma \ref{Conclusion 1}. Since $\widehat{m_0+1}\neq m_0$, we also have $\widehat{m_0+1}=m_0+1$, i.e., $m_0+1\mid 2t_V$.

Set $2u$ to be the even integer in $\{m_0, m_0+1\}$.
Then $u\leq m_0-2$ since $m_0\geq4$.
We have shown previously $u\mid t_V$ and \begin{equation}\label{g(2u)=0}
  g(2u)=0.
\end{equation}
Write $t_V=2^ks$ and $u=2^lv$, where $k,l\geq0$ and $s, v$ are odd.
Put $u'=2^lv'$, where $v'=\frac{s+v}{2}$ when $s\equiv v\,(\mathrm{mod}\,4)$ and $k>l$; $v'=\frac{s-v}{2}$ when $s\not\equiv v\,(\mathrm{mod}\,4)$ or $k=l$.
Then $2u'<t_V-1$ and $2u'\equiv cu\,(\mathrm{mod}\,s)$ for some $c\in\{-1,1\}$, which gives
$$\widehat{2u'\pm1}=\gcd(2u'\pm1,2t_V)=\gcd(2u'\pm1,s)
=\gcd(cu\pm1,s)=\gcd(u\pm c,s).$$
Therefore, $\widehat{2u'\pm1}\leq u+1\leq m_0-1$, and thus (\ref{g(m)=(-1)m-1g(1)}) yields
$$g(2u'\pm1)=g(\widehat{2u'\pm1})=(-1)^{\widehat{2u'\pm1}-1}
g(1)=g(1)\neq0.$$
It follows from Lemma \ref{Conclusion 1} that $g(2u')=-g(1)\neq0$.
However, since
$$\gcd(u',t_V)=2^l\gcd(v',s)
=2^l\gcd(2v',s)
=2^l\gcd(v,s)
=\gcd(u,t_V),$$
we have $$\widehat{2u'}=2\gcd(u',t_V)=2\gcd(u,t_V)=\widehat{2u},$$ and then
$g(2u')=g(2u)=0$ by (\ref{g(2u)=0}). This is a contradiction and completes the proof.
$\hfill \square $
\vskip2mm

To prove Theorem \ref{pkthm}, we need two more lemmas.

\begin{lem}\label{3k} Suppose $k\geq2$ and $g\in E_{2\cdot3^k}$. Assume that either $g(1)$ or $g(2)$ is not equal to $0$, and
  the sum function $S(m)=\sum_{i=1}^{m}g(i)$ of $g$ only takes values $0$ or $1$ for $1\leq m\leq3^k-1$.
  \begin{itemize}
    \item [(1)] If $g(1)g(2)\neq0$ then $g(m)=(-1)^{m-1}$ for every $1\leq m\leq3^k-1$.
    \item [(2)] If $g(1)g(2)=0$ then $g(3^l)=g(2\cdot3^l)=-1$ for every $1\leq l\leq k-1$.
  \end{itemize}
\end{lem}
\begin{proof}
  For every $1\leq l\leq k-1$, since $g\in E_{2\cdot3^k}$ we have
  $$g(3^l-1)=g(3^l+1)=g(2)$$ and $$g(3^l-2)=g(3^l+2)=g(1).$$
  By Lemma \ref{Conclusion 1}, if $g(2)\neq0$ then $g(3^l)=-g(2)$; if $g(2)=0$ then $g(1)\neq0$, which gives
  $$g(3^l)=g(3^l-1)+g(3^l)+g(3^l+1)=-g(1).$$
  Similarly,   $$g(2\cdot3^l)=\begin{cases}
                        -g(1), &  g(1)\neq0; \\
                        -g(2), & g(1)=0,
                      \end{cases}\quad 1\leq l\leq k-1.$$
                      This proves the lemma.
\end{proof}

\begin{lem}\label{abineq} Suppose $k\geq2$, $a,b\in\{-1,0,1\}$ and $ab=0$.
  Then
  $$|a\xi^k-(2a+b)\xi^{k-1}-1|
  >|(b-a)(\xi^k-\xi^{k-1})+2|$$
for some $\xi\in\overline{\mathbb{D}}$.
\end{lem}
The proof of Lemma \ref{abineq} is due to a direct calculation. See Lemma \ref{abineq'} in Appendix.
\vskip2mm
\noindent\textbf{Proof of Theorem \ref{pkthm}.}
According to Theorem \ref{Vtthm}, it remains to prove the  case when $p=3$ and $k\geq2$.
Moreover, we only need to prove  the necessity. For this, assume $\mathbf{1}_V\in\mathcal{C}$  and set 
$g(m)=J_V(\frac{m}{3^k})\ (m\in\mathbb{Z})$.
By Theorem \ref{criterion}, $g\in E_{2\cdot3^k}$ and the polynomial
$$R=\sum_{d\mid 2\cdot3^k}g(\frac{2\cdot3^k}{d})M_d$$
has no zeros in $\mathbb{D}^{S_{2\cdot3^k}}$. There exists a boundary point of $V$ being of form $\frac{s}{3^k}$, where $s$ is an integer that cannot divided by $3$. This implies that either $g(1)$ or $g(2)$ is not equal to $0$ since
$$g(\gcd(s,2))=g(\gcd(s,2\cdot3^k))=g(s)=J_V(\frac{s}{3^k})\neq0.$$ Applying Lemma \ref{3k} (1), one sees that it suffices to show $g(1)g(2)\neq0$.

To reach a contradiction, assume $g(1)g(2)=0$ conversely and set $c=g(1)+g(2)$.
Then exactly one of $\frac{1}{3^k}$ and $\frac{2}{3^k}$ is a boundary point of $V$,
forcing
\begin{equation}\label{g(1)+g(2)}
 c=J_V(\frac{1}{3^k})+J_V(\frac{2}{3^k})=1-2\mathbf{1}_V(0^+)=1-g(0)=1-g(2\cdot3^k).
\end{equation}
Similarly, since $g(3^k-1)=g(2)$ and $g(3^k-2)=g(1)$,
  one has $$g(3^k-2)+g(3^k-1)=J_V(\frac{1}{3^k})+J_V(\frac{2}{3^k})=-2\mathbf{1}_V(1^-)-1=-g(3^k)-1.$$
  That is to say, $g(2\cdot3^k)=1-c$ and $g(3^k)=-1-c$. Also from (\ref{g(1)+g(2)}), we deduce that
  $\sum_{i=1}^mcg(i)$ only takes values $0$ or $1$ for $1\leq m\leq3^k-1$.
  It follow  from Lemma \ref{3k} (2) that $$g(3^l)=g(2\cdot3^l)=-c$$ for every $1\leq l\leq k-1$.
  By a direct calculation,
  \begin{equation*}
    \begin{split}
       R(z_1,z_2) & =g(2\cdot3^k)+g(3^k)M_2(z_1)+\sum_{l=1}^{k-1}g(2\cdot3^l)M_{3^{k-l}}(z_2) \\
         & \ +\sum_{l=1}^{k-1}g(3^l)M_2(z_1)M_{3^{k-l}}(z_2)+g(2)M_{3^k}(z_2)+g(1)M_2(z_1)M_{3^k}(z_2) \\
         & =[g(1)z_2^k-(g(1)+c)z_2^{k-1}-1]z_1+(g(2)-g(1))z_2^k   \\
         & \ +(g(1)-g(2))z_2^{k-1}+2.
    \end{split}
  \end{equation*}
  This yields that $$|g(1)z^k-(g(1)+c)z^{k-1}-1|\leq|(g(2)-g(1))(z^k-z^{k-1})+2|$$
  holds for all $z\in\overline{\mathbb{D}}$, which contradicts with Lemma \ref{abineq}.
The proof is complete.
%
%
%
%
%
$\hfill \square $
\vskip2mm
\section {Proof of Theorem \ref{titjthm}}

In this section, we will prove  Theorem \ref{titjthm}, which is restated below.

\vskip2mm

\noindent\textbf{Theorem \ref{titjthm}.}
\textit{Let $V$ be a proper and non-degenerated open subset of $(0,1)$, and $\{\frac{s_i}{t_i}\}_{i=1}^{M}$ be all  boundary points of $V$  in $(0,1)$ with  $\mathrm{gcd}(s_i,t_i)=1$ for all $i$.
  Assume that not all denominators $t_i$ are the same, and
  \begin{itemize}
    \item [(1)] for each $1\leq i\leq M$, $t_i$ is odd,  and $t_i$ is not a product of two primes;
    \item [(2)] for each $1\leq i,j\leq M$, either $t_i=t_j$ or $\gcd(t_i,t_j)=1$.
  \end{itemize}
Then
   $\mathbf{1}_V\in\mathcal{C}$ if and only if there exist two primes $p_1,p_2$ such that one of the following situations is true:
    \begin{itemize}
   \item [(i)] $p_1=p_2+2$ and
     $$V=(\frac{1}{p_1},\frac{1}{p_2})\cup(\frac{3}{p_1},\frac{3}{p_2})\cup\cdots
     \cup(\frac{2m-1}{p_1},\frac{2m-1}{p_2})\cup\cdots\cup(\frac{p_1-2}{p_1},1);$$
     \item [(ii)]   $p_1=p_2+2$ and
  $$V=(0,\frac{2}{p_1})\cup(\frac{2}{p_2},\frac{4}{p_1})\cup\cdots\cup(\frac{2m-2}{p_2},\frac{2m}{p_1})
     \cup\cdots\cup(\frac{p_2-1}{p_2},\frac{p_1-1}{p_1});$$
    \item [(iii)]   $p_1=2p_2+1$ and
     $$V=(0,\frac{2}{p_1})\cup(\frac{1}{p_2},\frac{4}{p_1})\cup\cdots\cup(\frac{m-1}{p_2},\frac{2m}{p_1})
     \cup\cdots\cup(\frac{p_2-1}{p_2},\frac{p_1-1}{p_1});$$
     \item [(iv)] $p_1=2p_2-1$ and
     $$V=(0,\frac{1}{p_2})\cup(\frac{2}{p_1},\frac{2}{p_2})\cup\cdots\cup(\frac{2m-2}{p_1},\frac{m}{p_2})
     \cup\cdots\cup(\frac{p_1-1}{p_1},1).$$
      \end{itemize}}
\vskip2mm


We list some lemmas in Subsection 8.1 as preparation, and then prove Theorem \ref{titjthm} in Subsection 8.2.

\subsection{Preparatory lemmas}

Recall that if $q\ (q\geq2)$  has prime factorization $q=p_{j_1}^{k_1}\cdots p_{j_l}^{k_l}$,  then $S_q=\{j_1,\cdots,j_l\}$ and $$
   M_q(z_{j_1},\cdots,z_{j_l})=z_{j_1}^{k_1-1}(z_{j_1}-1)\cdots z_{j_l}^{k_l-1}(z_{j_l}-1).
 $$
 Moreover, $M_{q_1q_2}=M_{q_1}M_{q_2}$ whenever $\gcd(q_1,q_2)=1$.

\begin{lem} \label{image}
If $q\ (q\geq2)$ is neither a prime nor a product of two primes, then
 $$M_q(\mathbb{D}^{S_q})\supseteq\{re^{i\theta}:0<r\leq\frac{9}{5}, \frac{3\pi}{4}\leq\theta\leq\frac{5\pi}{4}\}\cup\{\frac{2}{5}\}.$$

 In particular, if $q\ (q\geq2)$ is not a product of two primes, then
 $$M_q(\mathbb{D}^{S_q})\supseteq[-\frac{9}{5},0),\quad
 M_q(\overline{\mathbb{D}^{S_q}})\supseteq\{i-1, -i-1\}.$$
\end{lem}
The proof of Lemma \ref{image} is due to a direct calculation. See Lemma \ref{image'} in Appendix for the details of the calculation.

\begin{lem}\label{Mqinequality}
  If $q\ (q\geq2)$ is not a product of two primes, then there exists $\mathbf{z}^i\in\overline{\mathbb{D}^{S_q}}$ ($i=1,2,3,4$), such that the following four inequalities hold:
  \begin{itemize}
    \item [(1)] $|M_q(\mathbf{z}^1)-1-i|>\sqrt{10}$.
    \item [(2)] $|M_q(\mathbf{z}^2)+3+i|<\sqrt{2}$.
    \item [(3)] $|M_q(\mathbf{z}^3)+3+i|>\sqrt{10}$.
    \item [(4)] $|M_q(\mathbf{z}^4)-3-i|<\sqrt{10}$.
  \end{itemize}
\end{lem}
\begin{proof}
For any complex number $c$ with $|c|>1$, one can take $z, w\in\overline{\mathbb{D}}$ satisfying
$|z-c|=|c|+1$ and $|w-c|=|c|-1$. This fact proves the case when $q$ is a prime.

Now assume that $q$ is also not a prime. By Lemma \ref{image}, there exist $\mathbf{u}, \mathbf{v}, \mathbf{w}\in\mathbb{D}^{S_q}$ such that
$M_q(\mathbf{u})=-\frac{9\sqrt{2}}{10}-\frac{9\sqrt{2}}{10}i$,
$M_q(\mathbf{v})=-\frac{27\sqrt{10}}{50}-\frac{9\sqrt{10}}{50}i$
and $M_q(\mathbf{w})=\frac{2}{5}$. The proof is finished by taking $\mathbf{z}^1=\mathbf{u}$, $\mathbf{z}^2=\mathbf{v}$ and $\mathbf{z}^3=\mathbf{z}^4=\mathbf{w}$.
\end{proof}

\begin{lem}\label{Mq1Mq2}
Suppose that $\frac{1}{2}\leq\lambda<5$, $q_1,q_2\geq2$, $\gcd(q_1,q_2)=1$ and neither $q_1$ nor $q_2$ is a product of two primes.
\begin{itemize}
    \item [(1)] If $2-\lambda M_{q_1}+M_{q_2}$ has no zeros in $\mathbb{D}^{S_{q_1q_2}}$, then $q_1$ is a prime.
    \item [(2)] If $M_{q_1}+M_{q_2}$ has no zeros in $\mathbb{D}^{S_{q_1q_2}}$, then both $q_1$ and $q_2$ are  primes.
  \end{itemize}
\end{lem}
\begin{proof}
 (1)  If $q_1$ is not a prime, then by Lemma \ref{image}, one can take $\mathbf{z}^1\in\mathbb{D}^{S_{q_1}}$ and  $\mathbf{z}^2\in\mathbb{D}^{S_{q_2}}$, such that $M_{q_1}(\mathbf{z}^1)=\frac{2}{5}$ and $M_{q_2}(\mathbf{z}^2)=-\frac{10-2\lambda}{5}$.

 \vskip2mm

 (2) If one of $q_1, q_2$, say $q_1$, is not a prime, then by Lemma \ref{image}, one can take $\mathbf{z}^1\in\mathbb{D}^{S_{q_1}}$ and  $\mathbf{z}^2\in\mathbb{D}^{S_{q_2}}$, such that $M_{q_1}(\mathbf{z}^1)=\frac{2}{5}$ and $M_{q_2}(\mathbf{z}^2)=-\frac{2}{5}$.
\end{proof}

In the sequel, we also need a multi-dimensional version of Hurwitz's theorem \cite[pp. 80]{Na}.
\begin{lem} \label{Hurwitz}
 Let $\Omega\subseteq\mathbb{C}^n$ be a domain and
 $\{f_k\}_{k\geq1}$ a sequence of holomorphic functions on $\Omega$ with each $f_k$ zero-free. If $\{f_k\}_{k\geq1}$ converges to a function $f$ uniformly on compact subset of $\Omega$, then either $f\equiv0$ or $f$ has no zeros in $\Omega$.
\end{lem}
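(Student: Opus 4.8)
The plan is to reduce this several--variables statement to the classical one--dimensional Hurwitz theorem by restricting all the functions to complex lines. Assume $f\not\equiv0$; it then suffices to prove that $f$ has no zeros in $\Omega$. Arguing by contradiction, I would suppose $f(a)=0$ for some $a\in\Omega$ and produce a complex line through $a$ along which the restriction of $f$ is holomorphic, not identically zero, yet still vanishes at $a$. This will contradict the planar Hurwitz theorem and thereby establish the dichotomy.

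First I would locate a suitable direction. If, for every $v\in\mathbb{C}^n$, the slice $\lambda\mapsto f(a+\lambda v)$ vanished identically for all small $\lambda$, then $f$ would vanish on an entire ball centred at $a$, and the identity theorem for holomorphic functions of several variables would force $f\equiv0$ on the connected set $\Omega$, contradicting $f\not\equiv0$. Hence there exist a direction $v\neq0$ and a number $\lambda_0$, with $|\lambda_0|$ small enough that the segment from $a$ to $a+\lambda_0 v$ stays in $\Omega$, such that $f(a+\lambda_0 v)\neq0$.

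Next I would set up the one--variable problem. Let $U=\{\lambda\in\mathbb{C}:a+\lambda v\in\Omega\}$ and let $U_0$ be the connected component of the open set $U$ containing $0$; by the choice of $\lambda_0$ the segment argument gives $\lambda_0\in U_0$. Define $g(\lambda)=f(a+\lambda v)$ and $g_k(\lambda)=f_k(a+\lambda v)$ on $U_0$. Each $g_k$ is holomorphic and zero-free, since $f_k$ is zero-free on $\Omega$ and $a+\lambda v\in\Omega$ for $\lambda\in U_0$; moreover $g(0)=0$ while $g(\lambda_0)\neq0$, so $g\not\equiv0$ on the planar domain $U_0$. Because the affine map $\lambda\mapsto a+\lambda v$ sends compact subsets of $U_0$ to compact subsets of $\Omega$, the uniform convergence $f_k\to f$ on compacta transfers to $g_k\to g$ uniformly on compacta of $U_0$.

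Finally, applying the classical one--dimensional Hurwitz theorem on $U_0$ to the zero-free sequence $\{g_k\}$ converging to $g$, one concludes that either $g\equiv0$ or $g$ has no zeros in $U_0$; but $g\not\equiv0$ together with $g(0)=0$ gives a contradiction. This shows $f$ has no zeros in $\Omega$, completing the proof. The only step that is not a routine transfer of hypotheses is the first one, namely guaranteeing a line on which $f$ is not identically zero, and it is handled cleanly by the several-variables identity theorem; the passage to $U_0$ and the convergence transfer are then immediate.
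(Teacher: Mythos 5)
Your proof is correct. Note that the paper itself does not prove this lemma at all: it simply cites it from Narasimhan's book (\cite{Na}, p.~80), so your write-up supplies a self-contained argument where the paper defers to the literature. Your route --- restricting to a complex line through a putative zero $a$ and invoking the classical one-variable Hurwitz theorem --- is the standard way to prove the several-variables statement, and every step checks out: the identity theorem correctly rules out the degenerate case where $f$ vanishes on every line through $a$ near $a$ (fix a ball $B(a,r)\subseteq\Omega$; if $f(a+\lambda v)=0$ for all unit $v$ and $|\lambda|<r$, then $f\equiv0$ on the ball, hence on the domain $\Omega$); the segment from $a$ to $a+\lambda_0 v$ inside that ball guarantees $\lambda_0$ lies in the component $U_0$ of $0$, so the slice $g$ is a not-identically-zero holomorphic function on a planar domain with $g(0)=0$; zero-freeness of each $g_k$ and locally uniform convergence transfer trivially under the affine embedding, since compact subsets of $U_0$ map to compact subsets of $\Omega$. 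The one-variable Hurwitz theorem then yields the contradiction. The only stylistic caution is to fix the radius $r$ of the ball before quantifying over directions $v$, as the phrase ``for all small $\lambda$'' might otherwise suggest a $v$-dependent neighborhood; with that fixed, the argument is complete.
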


\subsection{Proof of Theorem \ref{titjthm}}

This section is dedicated to the proof of Theorem \ref{titjthm}. The sufficiency has been established in Examples 7.2-7.4.
For the necessity, assume $\mathbf{1}_V\in\mathcal{C}$.

To begin, we introduce some notations. We first list all of the different numbers appearing in $\{t_i\}_{i=1}^{M}$: $d_1, d_2, \cdots, d_n\ (n\geq2)$. Set $d_0=1$, $$q=2t_V=2d_1d_2\cdots d_n,$$
$g(m)=J_V(\frac{m}{t_V})\ (m\in\mathbb{Z})$ and $$a_i=J_V(\frac{1}{d_i})=g(\frac{t_V}{d_i}),\ b_i=J_V(\frac{2}{d_i})=g(\frac{q}{d_i}),\quad 0\leq i\leq n.$$
Then by Theorem \ref{criterion}, $g\in E_q$ and the polynomial $$R=\sum_{d\mid q}g(\frac{q}{d})M_d
=\sum_{i=0}^{n}\left(a_iM_{2d_i}+b_iM_{d_i}\right)$$ has no zeros in $\mathbb{D}^{S_q}$. Since each $d_i$ is odd, we have $$M_{2d_i}=M_2M_{d_i}
=(z_1-1)M_{d_i},$$ and thus
\begin{equation}\label{Rz}
  \begin{split}R & =\sum_{i=0}^{n}[a_i(z_1-1)+b_i]M_{d_i} \\
  &=a_0z_1+b_0-a_0+\sum_{i=1}^{n}(a_iz_1+b_i-a_i)M_{d_i}.
  \end{split}
  \end{equation}
Set $\Lambda=\{-1,0,1\}^2\setminus(0,0)$ and $I=\{1,2,\cdots,n\}$. For $(a,b)\in\Lambda$ put $$I_{a,b}=\{i\in I:a_i=a, b_i=b\},
\quad n_{a,b}=\sharp I_{a,b}.$$
We also put \begin{equation*}
 e_i =\begin{cases}
 \frac{2}{d_i}, & i\in I_{0,1}\cup I_{0,-1}; \\
     \frac{1}{d_i}, & \mbox{otherwise},
      \end{cases}
     \quad f_i=\begin{cases}
 \frac{2}{d_i}, & i\in I_{1,0}\cup I_{-1,0}; \\
     \frac{1}{d_i}, & \mbox{otherwise}.
      \end{cases}
 \end{equation*}
Arrange $\{e_i\}_{i=1}^{n}$ and $\{f_i\}_{i=1}^{n}$ from small to large, respectively:
$$e_{i_1}<e_{i_2}<\cdots<e_{i_n};\quad f_{j_1}<f_{j_2}<\cdots<f_{j_n}.$$
It is obvious that $e_{i_1}$ is the smallest boundary point of $V$ in $(0,1)$, and
$1-f_{j_1}$ is the largest boundary point of $V$ in $(0,1)$. If $b_0=0$ then $$g(te_{i_1})=J_V(e_{i_1})=1,$$ and thus $$i_1\in I_{1,0}\cup I_{0,1}\cup I_{1,1}\cup I_{1,-1};$$ if $b_0=2$ then $$g(te_{i_1})=J_V(e_{i_1})=-1,$$ and thus $$i_1\in I_{-1,0}\cup I_{0,-1}\cup I_{-1,1}\cup I_{-1,-1}.$$ Similarly, if $a_0=0$ then
$$j_1\in I_{-1,0}\cup I_{0,-1}\cup I_{1,-1}\cup I_{-1,-1};$$ if $a_0=-2$ then $$j_1\in I_{1,0}\cup I_{0,1}\cup I_{1,1}\cup I_{-1,1}.$$

Following the notations above, we establish several lemmas.

\begin{lem}\label{RJ}
  Let $J$ be a nonempty subset of $I$, and put
  $$R_J=a_0z_1+b_0-a_0+\sum_{i\in J}(a_iz_1+b_i-a_i)M_{d_i},$$
  $$R_J'=b_0+\sum_{i\in J}b_iM_{d_i},$$
  $$R_J''=b_0-2a_0+\sum_{i\in J}(b_i-2a_i)M_{d_i}.$$
  Then \begin{itemize}
         \item [(1)]  $R_J$ and $R_J''$
  have no zeros in $\mathbb{D}^{S_q}$.
  \item [(2)] $R_J'$
  has no zeros in $\mathbb{D}^{S_q}$ provided $b_i\neq0$ for some $i\in J$.
         \item [(3)] the inequality $$|a_0+\sum_{i\in J}a_iM_{d_i}|\leq|b_0-a_0+\sum_{i\in J}(b_i-a_i)M_{d_i}|$$ holds on $\overline{\mathbb{D}^{S_q}}$.
       \end{itemize}
\end{lem}
\begin{proof}
  Put $S=\bigcup_{i\in J\cup\{0\}}S_{d_i}$. Then $S\subseteq S_q$ and $1\in S$.
For each $k\in\mathbb{N}$, we define a polynomial $R_{J,k}$, which depends on variables $\{z_j:j\in S\}$, by substituting $z_j=1-\frac{1}{k}$ in $R$ for every $j\in S_q\setminus S$.
We further define two polynomials $R_{J,k}'$, $R_{J,k}''$ by  substituting $z_1=1-\frac{1}{k}$ and $z_1=\frac{1}{k}-1$ in $R_{J,k}$, respectively. Then for  each $k\in\mathbb{N}$, $R_{J,k}$, $R_{J,k}'$ and $R_{J,k}''$ have no zeros in $\mathbb{D}^{S_q}$. Moreover, $\{R_{J,k}\}_{k\geq1}$, $\{R_{J,k}'\}_{k\geq1}$ and $\{R_{J,k}''\}_{k\geq1}$ converge uniformly to $R_J$, $R_J'$ and $R_J''$ on $\overline{\mathbb{D}^{S_q}}$, respectively. Since for any $i\in I$, $a_iz_1+b_i-a_i\not\equiv0$ and $b_i-2a_i\neq0$,
(1) and (2) immediately follows from Lemma \ref{Hurwitz}.

To see (3), notice that
$$R_J=(a_0+\sum_{i\in J}a_iM_{d_i})z_1+b_0-a_0+\sum_{i\in J}(b_i-a_i)M_{d_i}.$$
Since each $M_{d_i}$ does not depend on the variable $z_1$, (3) follows from (1).
\end{proof}


\begin{lem}\label{a0+b0} The following identity holds,
  $$a_0+b_0+\sum_{i\in I_{1,0}\cup I_{0,1}}\phi(d_i)+2\sum_{i\in I_{1,1}}\phi(d_i)=
  \sum_{i\in I_{-1,0}\cup I_{0,-1}}\phi(d_i)+2\sum_{i\in I_{-1,-1}}\phi(d_i).$$
\end{lem}
\begin{proof} Set $D_+=\{d\geq3: J_V(\frac{2}{d})=1\}$ and
$D_-=\{d\geq3: J_V(\frac{2}{d})=-1\}$. The by Theorem \ref{num of component}, we have
$$a_0+b_0+\sum_{d\in D_+}\phi(d)=\sum_{d\in D_-}\phi(d).$$
If $d$ is odd then $d\in D_+$ if and only if $d=d_i$ for $i\in I_{0,1}\cup I_{1,1} \cup I_{-1,1}$. If $d$ is even then $d\in D_+$ if and only if $d=2d_i$ for $i\in I_{1,0}\cup I_{1,1} \cup I_{1,-1}$. That is to say, $$D_+=\{d_i: i\in I_{0,1}\cup I_{1,1} \cup I_{-1,1}\}\cup
\{2d_i: i\in I_{1,0}\cup I_{1,1} \cup I_{1,-1}\}.$$
Similarly, $$D_-=\{d_i: i\in I_{0,-1}\cup I_{-1,-1} \cup I_{1,-1}\}\cup
\{2d_i: i\in I_{-1,0}\cup I_{-1,-1} \cup I_{-1,1}\}.$$
Since each $d_i$ is odd, $\phi(2d_i)=\phi(2)\phi(d_i)=\phi(d_i)$, and thus
$$\sum_{d\in D_+}\phi(d)=\sum_{i\in I_{1,0}\cup I_{0,1}}\phi(d_i)+2\sum_{i\in I_{1,1}}\phi(d_i)+\sum_{i\in I_{1,-1}\cup I_{-1,1}}\phi(d_i),$$
$$\sum_{d\in D_-}\phi(d)=\sum_{i\in I_{-1,0}\cup I_{0,-1}}\phi(d_i)+2\sum_{i\in I_{-1,-1}}\phi(d_i)+\sum_{i\in I_{1,-1}\cup I_{-1,1}}\phi(d_i).$$
The proof is complete.
\end{proof}

\begin{lem}\label{i1j1}
 \begin{itemize}
    \item [(1)] If $i_1, j_1\in I_{1,1}\cup I_{-1,1}\cup I_{1,-1}\cup I_{-1,-1}$ then $i_1=j_1$.
    \item [(2)] If $i_1\in I_{0,1}\cup I_{0,-1}$ then $i_1=j_1$.
    \item [(3)] If $j_1\in I_{1,0}\cup I_{-1,0}$ then $i_1=j_1$.
  \end{itemize}
\end{lem}
\begin{proof}
  (1) If $i_1, j_1\in I_{1,1}\cup I_{-1,1}\cup I_{1,-1}\cup I_{-1,-1}$ then
  $$e_{i_1}\leq e_{j_1}=f_{j_1}\leq f_{i_1}=e_{i_1},$$ forcing $i_1=j_1$.

  \vskip2mm

  (2) Note that $e_i\leq2f_i$ for every $i\in I$.
  If $i_1\in I_{0,1}\cup I_{0,-1}$ then
  $$e_{j_1}\geq e_{i_1}=2f_{i_1}\geq2f_{j_1}\geq e_{j_1},$$ forcing $i_1=j_1$.

  (3) The proof is similar to that of (2).
\end{proof}

\begin{lem} \label{0111-11}
  \begin{itemize}
    \item [(1)] If $a_0=-2$, $b_0=0$, then $n_{0,1}=n_{1,1}=n_{-1,1}=0$ and $n_{-1,0}\leq1$.
    \item [(2)] If $a_0=0$, $b_0=2$, then $n_{0,1}+n_{1,1}\leq1$, $n_{-1,0}=n_{-1,1}=0$ and $n_{0,1}+n_{-1,-1}\leq1$.
    \item [(3)] If $a_0=-2$, $b_0=2$, then $n_{0,1}+n_{1,1}+n_{-1,1}=1$ and $2n_{-1,0}+n_{0,1}+3n_{-1,1}+n_{-1,-1}\leq3$.
  \end{itemize}
\end{lem}
\begin{proof} Assume $a_0=-2$ or $b_0=2$.
We first show the following two inequality:
\begin{equation}\label{J1ine}
  n_{0,1}+n_{1,1}+n_{-1,1}\leq\frac{b_0}{2},
\end{equation}
\begin{equation}\label{J2ine}
 2n_{-1,0}+n_{0,1}+3n_{-1,1}+n_{-1,-1}\leq\frac{b_0-2a_0}{2}.
\end{equation}
  Put $J_1=I_{0,1}\cup I_{1,1}\cup I_{-1,1}$ and $J_2=I_{-1,0}\cup I_{0,1}\cup I_{-1,1}\cup I_{-1,-1}$.
  To prove (\ref{J1ine}), assume that $J_1$ is nonempty without loss of generality.
  Lemma \ref{RJ} (1), applied to the set $J_1$, implies that
  $$a_0z_1+b_0-a_0+\sum_{i\in I_{0,1}}M_{d_i}
  +z_1\sum_{i\in I_{1,1}}M_{d_i}+(2-z_1)\sum_{i\in I_{-1,1}}M_{d_i}$$
  has no zeros in $\mathbb{D}^{S_q}$. By Lemma \ref{image}, for any $-\frac{9}{5}\leq\lambda<0$ and any $i\in J$, the polynomial
  $M_{d_i}$ can take value $\lambda$ in $\mathbb{D}^{S_{d_i}}$, and thus
  $$a_0z_1+b_0-a_0+[n_{0,1}+n_{1,1}z_1+n_{-1,1}(2-z_1)]\lambda\neq0.$$
  In particular, by taking $z_1=1-\varepsilon\ (0<\varepsilon<1)$, we have
  \begin{equation}\label{b0-varepsilona0}
    b_0-\varepsilon a_0+[n_{0,1}+(1-\varepsilon)n_{1,1}+(1+\varepsilon)n_{-1,1}]\lambda\neq0,
  \quad -\frac{9}{5}\leq\lambda<0.
  \end{equation}
  Since $b_0-\varepsilon a_0>0$, (\ref{b0-varepsilona0}) is equivalent to the following,
  $$\frac{9}{5}[n_{0,1}+(1-\varepsilon)n_{1,1}+(1+\varepsilon)n_{-1,1}]<b_0-\varepsilon a_0.$$
  Letting $\varepsilon\rightarrow0$, we further have
  $$n_{0,1}+n_{1,1}+n_{-1,1}\leq\frac{5b_0}{9}.$$ This proves (\ref{J1ine}) since $n_{0,1}+n_{1,1}+n_{-1,1}$ is an integer and the integral part of $\frac{5b_0}{9}$ is equal to $\frac{b_0}{2}$.

  To prove (\ref{J2ine}), assume that $J_2$ is nonempty without loss of generality.
  Applying Lemma \ref{RJ} (1) to the set $J_2$, we see that
  $$b_0-2a_0+2\sum_{i\in I_{-1,0}}M_{d_i}+\sum_{i\in I_{0,1}}M_{d_i}+3\sum_{i\in I_{-1,1}}M_{d_i}
  +\sum_{i\in I_{-1,-1}}M_{d_i}$$
  has no zeros in $\mathbb{D}^{S_q}$. Using the argument in the last paragraph, we can obtain
  $$2n_{-1,0}+n_{0,1}+3n_{-1,1}+n_{-1,-1}\leq\frac{5(b_0-2a_0)}{9}.$$
  This proves (\ref{J2ine}) since the integral part of $\frac{5(b_0-2a_0)}{9}$ is equal to $\frac{b_0-2a_0}{2}$.

  (1) and (2) has been proved by combining (\ref{J1ine}) with (\ref{J2ine}). To complete the proof,
  assume $a_0=-2$ and $b_0=2$. Then $i_1\not\in I_{1,0}$ and $j_1\in J_1\cup I_{1,0}$. By Lemma \ref{i1j1} (3), $j_1\not\in I_{1,0}$, and thus $J_1$ is nonempty. This proves (3).
  \end{proof}

\begin{lem} \label{10-10}
  \begin{itemize}
    \item [(1)] If $a_0=-2$, $b_0=0$, then $n_{1,0}n_{0,-1}=n_{1,0}n_{-1,-1}=0$.
    \item [(2)] If $a_0=0$, $b_0=2$, then $n_{1,0}n_{0,1}=n_{1,0}n_{1,1}=0$.
    \item [(3)] If $a_0=-2$, $b_0=2$, then $n_{0,1}n_{-1,0}=n_{1,1}n_{-1,0}=0$.
  \end{itemize}
\end{lem}
\begin{proof}
  To show (1), assume $n_{1,0}\geq1$ and $n_{0,-1}+n_{-1,-1}\geq1$, and take $i\in I_{1,0}$ and
  $j\in I_{0,-1}\cup I_{-1,-1}$. Applying Lemma \ref{RJ} (3) to $\{i,j\}$, we see that
  one of the following two inequalities  holds on $\overline{\mathbb{D}^{S_q}}$:
 $$
   |2-M_{d_i}|\leq|2-M_{d_i}-M_{d_j}|;
 $$ $$
   |2-M_{d_i}+M_{d_j}|\leq|2-M_{d_i}|.
 $$
  By Lemma \ref{image}, $M_{d_i}$ can take value $-1-i$ in $\overline{\mathbb{D}^{S_{d_i}}}$,
  and then one of the following two inequalities  holds on $\overline{\mathbb{D}^{S_{d_j}}}$: $$
   |M_{d_j}-3-i|\geq\sqrt{10};
 $$ $$
   |M_{d_j}+3+i|\leq\sqrt{10}.
 $$ This leads to a contradiction due to Lemma \ref{Mqinequality}.

 Using Lemma \ref{Mqinequality}, one can prove (2) and (3) similarly.
\end{proof}

We are ready to prove the necessity in Theorem \ref{titjthm}, which
   is due to combining the following two lemmas.

\begin{lem}\label{-2}
\begin{itemize}
  \item [(1)] We have $a_0=-2$ or $b_0=2$.
  \item [(2)] If $a_0=0$, $b_0=2$ then $n_{0,1}=1$ or $n_{1,1}=1$.
  \end{itemize}
\end{lem}

\begin{lem}\label{-1}
\begin{itemize}
  \item [(1)] If $a_0=-2$, $b_0=0$, then there exist two primes $p_1,p_2$, such that  $p_1=p_2+2$ and
     $$V=(\frac{1}{p_1},\frac{1}{p_2})\cup(\frac{3}{p_1},\frac{3}{p_2})\cup\cdots
     \cup(\frac{2m-1}{p_1},\frac{2m-1}{p_2})\cup\cdots\cup(\frac{p_1-2}{p_1},1).$$
  \item [(2)] If $a_0=0$, $b_0=2$ and $n_{0,1}=1$, then there exist two primes $p_1,p_2$, such that  $p_1=p_2+2$ and
  $$V=(0,\frac{2}{p_1})\cup(\frac{2}{p_2},\frac{4}{p_1})\cup\cdots\cup(\frac{2m-2}{p_2},\frac{2m}{p_1})
     \cup\cdots\cup(\frac{p_2-1}{p_2},\frac{p_1-1}{p_1}).$$
  \item [(3)] If $a_0=0$, $b_0=2$ and $n_{1,1}=1$, then there exist two primes $p_1,p_2$, such that  $p_1=2p_2+1$ and
     $$V=(0,\frac{2}{p_1})\cup(\frac{1}{p_2},\frac{4}{p_1})\cup\cdots\cup(\frac{m-1}{p_2},\frac{2m}{p_1})
     \cup\cdots\cup(\frac{p_2-1}{p_2},\frac{p_1-1}{p_1}).$$
     \item [(4)] If $a_0=-2$, $b_0=2$, then there exist two primes $p_1,p_2$, such that  $p_1=2p_2-1$ and
     $$V=(0,\frac{1}{p_2})\cup(\frac{2}{p_1},\frac{2}{p_2})\cup\cdots\cup(\frac{2m-2}{p_1},\frac{m}{p_2})
     \cup\cdots\cup(\frac{p_1-1}{p_1},1).$$
  \end{itemize}
\end{lem}

\noindent\textbf{Proof of Lemma \ref{-2}.}
 (1) To reach a contradiction, assume conversely $a_0=b_0=0$.

\noindent\textbf{Claim.} For any $i,j\in I\ (i\neq j)$, the polynomial $(b_i-a_i)M_{d_i}+(b_j-a_j)M_{d_j}$ has no zeros in $\mathbb{D}^{S_q}$.

In fact, applying Lemma \ref{RJ} (1) to $\{i,j\}$,
  we see that $(a_iz_1+b_i-a_i)M_{d_i}+(a_jz_1+b_j-a_j)M_{d_j}$ has no zeros in $\mathbb{D}^{S_q}$,
  and hence so does  $(b_i-a_i)M_{d_i}+(b_j-a_j)M_{d_j}$.

 We first show \begin{equation}\label{8.12(1)1}
                 n_{1,1}=n_{-1,-1}=0.
               \end{equation}
  Suppose $n_{1,1}\geq1$ and take $i\in I_{1,1}$. Then Lemma \ref{RJ} (1) implies that the polynomial $R_i=z_1M_{d_i}$ has no zeros in $\mathbb{D}^{S_q}$, which is impossible. This gives
  $n_{1,1}=0$.
  Similarly, one has $n_{-1,-1}=0$.

  As a consequence of (\ref{8.12(1)1}),  $i_1\in I_{1,0}\cup I_{0,1}\cup I_{1,-1}$ and $j_1\in I_{-1,0}\cup I_{0,-1}\cup I_{1,-1}$. It follows from Lemma \ref{i1j1} (2)  that $i_1\not\in I_{0,1}$, and then $$I_{1,0}\cup I_{1,-1}\neq\emptyset.$$

  We then show  \begin{equation}\label{8.12(1)2}
                 n_{0,1}=n_{-1,0}=n_{-1,1}=0.
               \end{equation}
                Suppose $i\in I_{1,0}\cup I_{1,-1}$ and
  $j\in I_{0,1}\cup I_{-1,0}\cup I_{-1,1}$. Then the claim ensures that $(b_i-a_i)M_{d_i}+(b_j-a_j)M_{d_j}$ has no zeros in $\mathbb{D}^{S_q}$.
  This contradicts with Lemma \ref{image} since $b_i-a_i<0$ and $b_j-a_j>0$, and thus we have shown (\ref{8.12(1)2}).

With (\ref{8.12(1)1}) and (\ref{8.12(1)2}), Lemma \ref{a0+b0} immediately gives  $$\sum_{i\in I_{1,0}}\phi(d_i)=\sum_{i\in I_{0,-1}}\phi(d_i),$$ which yields that $n_{1,0}$ and $n_{0,-1}$ are  either simultaneously
zero or simultaneously nonzero.
We have two cases: $n_{1,0}=n_{0,-1}=0$ or $n_{1,0},n_{0,-1}\geq1$.

\noindent\textbf{Case 1.} $n_{1,0}=n_{0,-1}=0$.

In this case, $I=I_{1,-1}$ and $$\frac{1}{d_{i_1}}<\frac{1}{d_{i_2}}<\cdots<\frac{1}{d_{i_n}}.$$ For any $i,j\in I\ (i\neq j)$, it follows from the claim that $-2M_{d_i}-2M_{d_j}$ has no zeros in $\mathbb{D}^{S_q}$, and then by Lemma \ref{Mq1Mq2} (2),
each $d_i\ (i\in I)$ is a prime. Put $m_0=[\frac{d_{i_1}}{d_{i_2}}]$, the integral part of $\frac{d_{i_1}}{d_{i_2}}$. Then $m_0\geq1$ and
$$\frac{m_0}{d_{i_1}}<\frac{1}{d_{i_2}}<\frac{m_0+1}{d_{i_1}}.$$
Since $J_V(\frac{1}{d_{i_2}})=1$ and
$J_V(\frac{m_0}{d_{i_1}})J_V(\frac{m_0+1}{d_{i_1}})=-1$, there exists a  irreducible fraction $\frac{l}{d_{i_k}}\ (2\leq k\leq n)$, such that
$$\frac{m_0}{d_{i_1}}<\frac{l}{d_{i_k}}<\frac{m_0+1}{d_{i_1}}$$ and $J_V(\frac{l}{d_{i_k}})=-1$.
It follows that $l$ is even (since $i_k\in I_{1,-1}$), and thus
$$\frac{l}{d_{i_k}}\geq\frac{2}{d_{i_2}}>\frac{2m_0}{d_{i_1}}\geq\frac{m_0+1}{d_{i_1}}>\frac{l}{d_{i_k}},$$
which is impossible.

\vskip2mm

\noindent\textbf{Case 2.} $n_{1,0},n_{0,-1}\geq1$.

Take $i\in I_{1,0}$, $j\in I_{0,-1}$. By the claim,
$-M_{d_i}-M_{d_j}$ has no zeros in $\mathbb{D}^{S_q}$, and then by Lemma \ref{Mq1Mq2} (2),
both $d_i$ and $d_j$ are primes. Applying Lemma \ref{RJ} (3) to $\{i,j\}$, we see that the inequality
$$|M_{d_{i}}|\leq|M_{d_{i}}+M_{d_{j}}|$$
holds on $\overline{\mathbb{D}^{S_q}}$. That is to say, $|z-1|\leq|z+w-2|$ for  $z,w\in\overline{\mathbb{D}}$. But this fails for $z=i$ and $w=\frac{2-i}{\sqrt{5}}$, and then (1) has been proved.

\vskip2mm

(2) In the light of Lemma \ref{0111-11} (2), it suffices to show $n_{0,1}\geq1$ or $n_{1,1}\geq1$.
To reach a contradiction, assume conversely $n_{0,1}=n_{1,1}=0$.  Lemma \ref{0111-11} (2) also gives  $n_{-1,0}=n_{-1,1}=0$, which forces $i_1\in I_{0,-1}\cup I_{-1,-1}$ and $j_1\in I_{0,-1}\cup I_{1,-1}\cup I_{-1,-1}$. By Lemma \ref{i1j1}, we have $j_1\not\in I_{1,-1}$, i.e., $j_1\in I_{0,-1}\cup I_{-1,-1}$. It follows that $J_V(e_{i_2})=J_V(1-f_{j_2})=1$,  and then $i_2\in I_{1,0}\cup I_{1,-1}$ and $j_2\in I_{1,0}$.
In particular, $i_2\neq j_1$ and $j_2\neq i_1$.
Since  $$f_{i_2}\geq f_{j_2}=\frac{2}{d_{j_2}}=2e_{j_2}\geq2e_{i_2}=\frac{2}{d_{i_2}}\geq f_{i_2},$$ we further have $i_2=j_2$.

We first show $i_1=j_1\in I_{0,-1}$. Suppose $i_1\in I_{-1,-1}$. Since
\begin{equation}\label{-2(2)1}
 f_{i_1}=\frac{1}{d_{i_1}}=e_{i_1}<e_{j_2}=\frac{1}{d_{j_2}}<\frac{2}{d_{j_2}}=f_{j_2},
\end{equation}
one  has $i_1=j_1$, and thus $$J_V(1-f_{j_1})=J_V(1-\frac{1}{d_{j_1}})=J_V(1-\frac{2}{d_{j_1}})=-1.$$
This implies that there exists some $j\in I\setminus\{j_1\}$, such that $\frac{1}{d_{j_1}}<f_j<\frac{2}{d_{j_1}}$ and $J_V(1-f_j)=1$. Therefore,
$$\frac{2}{d_{j_2}}=f_{j_2}\leq f_j<\frac{2}{d_{j_1}},$$ which contradicts with (\ref{-2(2)1}).
So we have shown $i_1\in I_{0,-1}$, and then Lemma \ref{i1j1} (2) gives $i_1=j_1$.
It follows that \begin{equation}\label{-2(2)2}
 \frac{1}{d_{i_2}}=e_{i_2}>e_{i_1}=\frac{2}{d_{i_1}}.
\end{equation}
In particular, $d_{i_1}>3$.

Let $p$ be the smallest prime satisfying $p\nmid d_{i_1}$. Then $$J_V(1-f_{j_1})=J_V(1-f_{i_1})=J_V(1-\frac{1}{d_{i_1}})=J_V(1-\frac{p}{d_{i_1}})=-1.$$
This implies that there exists some $j\in I\setminus\{j_1\}$, such that $\frac{1}{d_{i_1}}<f_j<\frac{p}{d_{i_1}}$ and $J_V(1-f_j)=1$. Therefore,
\begin{equation}\label{-2(2)3}
  \frac{2}{d_{i_2}}=\frac{2}{d_{j_2}}=f_{j_2}\leq f_j<\frac{p}{d_{j_1}}=\frac{p}{d_{i_1}}.
\end{equation}
Combining this with (\ref{-2(2)2}), one has $p\geq5$, i.e., $3\mid d_{i_1}$. Thus, $d_{i_1}$ is not a prime (since $d_{i_1}>3$) and $3\nmid d_{i_2}$.

Now we have  $n_{-1,-1}=0$.
In fact, if  $i\in I_{-1,-1}$ then
applying Lemma \ref{RJ} (1) to $\{i_1,i\}$, we see that
the polynomial $2-M_{d_{i_1}}+M_{d_i}$ has no zeros in $\mathbb{D}^{S_q}$. This leads to a contradiction due to Lemma \ref{Mq1Mq2} (1).

From (\ref{-2(2)2}) and (\ref{-2(2)3}), we  see that $$\frac{2}{d_{i_1}}<\frac{1}{d_{i_2}}<\frac{3}{d_{i_2}}<\frac{2p}{d_{i_1}}$$
Note that $J_V(\frac{m}{d_{i_1}})=0$ for each $2<m<2p$, and
 $$J_V(e_{i_2})=J_V(\frac{1}{d_{i_2}})=J_V(\frac{3}{d_{i_2}})=1$$
 since $3\nmid d_{i_2}$. Then $n\geq3$ and there exists some $i\in I\setminus\{i_1,i_2\}$, such that $\frac{1}{d_{i_2}}<e_i<\frac{3}{d_{i_2}}$ and $J_V(e_i)=-1$.
It follows that $i\in I_{0,-1}$ and
$$\frac{3}{d_{i_2}}>e_i=\frac{2}{d_i}=2f_i>2f_{i_2}=\frac{4}{d_{i_2}}.$$ This is a contradiction.

$\hfill \square $
\vskip2mm

\noindent\textbf{Proof of Lemma \ref{-1}.}
Let $[x]\ (x\in\mathbb{R})$ be the integral part of $x$.
\vskip2mm
 (1) Assume $a_0=-2$, $b_0=0$. By Lemma \ref{0111-11} (1), we have \begin{equation}\label{8.13(1)1}
               n_{0,1}=n_{1,1}=n_{-1,1}=0,
              \end{equation}     forcing
$j_1\in I_{1,0}$. Therefore, Lemma \ref{i1j1} (3) gives $i_1=j_1$, and Lemma \ref{10-10} gives \begin{equation}\label{8.13(1)2}n_{0,-1}=n_{-1,-1}=0.\end{equation}
Since $i_1\in I_{1,0}$, one has  $J_V(e_{i_2})=-1$, which implies $i_2\in I_{-1,0}$. Again by Lemma \ref{0111-11} (1), $I_{-1,0}=\{i_2\}$. The inequality
$$\frac{1}{d_{i_1}}=e_{i_1}<e_{i_2}=\frac{1}{d_{i_2}}$$
yields \begin{equation}\label{ineqfor11}
         d_{i_1}\geq d_{i_2}+2
       \end{equation} since both $d_{i_1}$ and  $d_{i_2}$ are odd.
 Applying Lemma \ref{RJ} (1) to $\{i_1,i_2\}$, we see that
the polynomial $2-M_{d_{i_1}}+M_{d_{i_2}}$ has no zeros in $\mathbb{D}^{S_q}$, and then by Lemma \ref{Mq1Mq2} (1),
$d_{i_1}$ is a prime. With (\ref{8.13(1)1}) and (\ref{8.13(1)2}), Lemma \ref{a0+b0} and (\ref{ineqfor11}) gives
$$\phi(d_{i_2})=-2+\sum_{i\in I_{1,0}}\phi(d_i)\geq-2+\phi(d_{i_1})=d_{i_1}-3\geq d_{i_2}-1\geq \phi(d_{i_2}).$$
This yields that $d_{i_2}$ is also a prime, $d_{i_1}=d_{i_2}+2$ and $I_{1,0}=\{i_1\}$.
Rewrite $p_k=d_{i_k}\ (k=1,2)$. Then for each $1\leq m\leq \frac{p_1-1}{2}$,
$\frac{2m-1}{p_1}$ is a left endpoint of some component interval of $V$, and $\frac{2m-1}{p_2}$ is a right endpoint of some component interval of $V$. Moreover,
for  $k\in\{1,2\}$,
$J_V(\frac{2m}{p_k})=0\ (1\leq m\leq\frac{p_k-1}{2})$.

It remains to show $n=2$. Assume conversely $n\geq3$. Then $i_k\in I_{1,-1}$ for every $3\leq k\leq n$, $J_V(\frac{1}{d_{i_3}})=1$  and \begin{equation}\label{ineqfor12}\frac{1}{d_{i_3}}=f_{i_3}>f_{j_1}
=\frac{2}{p_1}>\frac{1}{p_2}.\end{equation}
This gives that $$\frac{2m-1}{p_2}<\frac{1}{d_{i_3}}<\frac{2m+1}{p_1}$$ for some $1\leq m\leq \frac{p_2-1}{2}$, and that there is at least one boundary point of $V$ between $\frac{1}{d_{i_3}}$ and $\frac{2m+1}{p_1}$. If $\frac{2}{d_{i_3}}>\frac{2m+1}{p_1}$ then  $n\geq4$ and $J_V(e_{i_4})=-1$. This is impossible since one should have $i_4\in I_{1,-1}$. Hence, we have shown $$\frac{4m-2}{p_2}<\frac{2}{d_{i_3}}<\frac{2m+1}{p_1}=\frac{2m+1}{p_2+2},$$ forcing $m=1$.
It follows from (\ref{ineqfor12}) that
$$\frac{4}{p_1}<\frac{2}{d_{i_3}}<\frac{3}{p_1},$$
which is a contradiction.
\vskip2mm
(2) Assume $a_0=0$, $b_0=2$ and $n_{0,1}=1$. By Lemma \ref{0111-11} (2) and Lemma \ref{10-10} (2), we have \begin{equation}\label{8.13(2)1}n_{1,1}=n_{-1,0}=n_{-1,1}=n_{-1,-1}=n_{1,0}=0,\end{equation} forcing $i_1\in I_{0,-1}$.
It follows from Lemma \ref{i1j1} (2) that $i_1=j_1$, and then  $J_V(1-f_{j_2})=1$.
This yields $I_{0,1}=\{j_2\}$ and \begin{equation}\label{ineqfor2}\frac{1}{d_{j_1}}=f_{j_1}<f_{j_2}=\frac{1}{d_{j_2}}.
\end{equation}
Applying Lemma \ref{RJ} (1) to $\{j_1,j_2\}$, we see that
the polynomial $2-M_{d_{j_1}}+M_{d_{j_2}}$ has no zeros in $\mathbb{D}^{S_q}$, and then by Lemma \ref{Mq1Mq2} (1),
$d_{j_1}$ is a prime. With (\ref{8.13(2)1}), Lemma \ref{a0+b0}  and (\ref{ineqfor2}) gives
$$2+\phi(d_{j_2})=\sum_{i\in I_{0,-1}}\phi(d_i)\geq\phi(d_{j_1})=d_{j_1}-1\geq d_{j_2}+1\geq 2+\phi(d_{j_2}).$$
This yields that $d_{j_2}$ is also a prime, $d_{j_1}=d_{j_2}+2$ and $I_{0,-1}=\{j_1\}$.
Rewrite $p_k=d_{j_k}\ (k=1,2)$. Then for each $1\leq m\leq \frac{p_1-1}{2}$,
$\frac{2m}{p_1}$ is a right endpoint of some component interval of $V$, and $\frac{2m-2}{p_2}$ is a left endpoint of some component interval of $V$. Moreover,
for  $k\in\{1,2\}$,
$J_V(\frac{2m-1}{p_k})=0\ (1\leq m\leq\frac{p_k-1}{2})$.
By using an argument similar to that in the second paragraph of (1), we can show $n=2$, which proves (2).
\vskip2mm
(3) Assume $a_0=0$, $b_0=2$ and $n_{1,1}=1$. By Lemma \ref{0111-11} (2) and Lemma \ref{10-10} (2), we have \begin{equation}\label{8.13(3)1}n_{0,1}=n_{-1,0}=n_{-1,1}=n_{1,0}=0,\end{equation} forcing $i_1\in I_{0,-1}\cup I_{-1,-1}$.

\noindent\textbf{Claim.} $i_1\in I_{0,-1}$.

To see this, assume conversely $i_1\in  I_{-1,-1}$. Then by  Lemma \ref{i1j1} (1), one necessarily has
 $j_1\not\in  I_{1,-1}$.  This gives $j_1\in I_{0,-1}\cup I_{-1,-1}$, and thus $J_V(1-f_{j_2})=1$. It follows that $I_{1,1}=\{j_2\}$ and
  $$\frac{1}{d_{i_1}}=e_{i_1}\leq e_{j_2}=\frac{1}{d_{j_2}}.$$
 Since $i_1\neq j_2$, one has \begin{equation}\label{ineqfor31}d_{i_1}\geq d_{j_2}+2.\end{equation} Applying Lemma \ref{RJ} (2) to $\{i_1,j_2\}$, we see that
the polynomial $2-M_{d_{i_1}}+M_{d_{j_2}}$ has no zeros in $\mathbb{D}^{S_q}$, and then by Lemma \ref{Mq1Mq2} (1),
$d_{i_1}$ is a prime. With (\ref{8.13(3)1}), Lemma \ref{a0+b0} and (\ref{ineqfor31}) gives
\begin{equation*}\begin{split}
                   2+2\phi(d_{j_2}) & =\sum_{i\in I_{0,-1}}\phi(d_i)+2\sum_{i\in I_{-1,-1}}\phi(d_i) \\
                     & \geq2\phi(d_{i_1})=2d_{i_1}-2\geq 2d_{j_2}+2\geq 4+2\phi(d_{j_2}),
                \end{split}\end{equation*} which is a contradiction.

By the claim and Lemma \ref{i1j1} (2), we have $i_1=j_1$, and then $J_V(1-f_{j_2})=1$. It follows that $I_{1,1}=\{j_2\}$ and $$\frac{2}{d_{j_1}}=e_{j_1}=e_{i_1}\leq e_{j_2}=\frac{1}{d_{j_2}},$$
which yields \begin{equation}\label{ineqfor32}d_{j_1}\geq2d_{j_2}+1.
\end{equation}
Applying Lemma \ref{RJ} (2) to $\{j_1,j_2\}$, we see that
the polynomial $2-M_{d_{j_1}}+M_{d_{j_2}}$ has no zeros in $\mathbb{D}^{S_q}$, and thus by Lemma \ref{Mq1Mq2} (1),
$d_{j_1}$ is a prime. With (\ref{8.13(3)1}), Lemma \ref{a0+b0} and (\ref{ineqfor32}) gives
\begin{equation*}\begin{split}2+2\phi(d_{j_2}) & =\sum_{i\in I_{0,-1}}\phi(d_i)+2\sum_{i\in I_{-1,-1}}\phi(d_i)\\ & \geq\phi(d_{j_1})=d_{j_1}-1\geq 2d_{j_2}\geq 2+2\phi(d_{j_2}).\end{split}\end{equation*}
This yields that $d_{j_2}$ is also a prime, $d_{j_1}=2d_{j_2}+1$, $I_{0,-1}=\{j_1\}$ and $n_{-1,-1}=0$.
Rewrite $p_k=d_{j_k}\ (k=1,2)$. Then for each $1\leq m\leq \frac{p_1-1}{2}$,
$\frac{2m}{p_1}$ is a right endpoint of some component interval of $V$, and $\frac{m-1}{p_2}$ is a left endpoint of some component interval of $V$. Moreover,
$J_V(\frac{2m-1}{p_1})=0\ (1\leq m\leq\frac{p_1-1}{2})$.

It remains to show $n=2$. Assume conversely $n\geq3$. Then $j_k\in I_{1,-1}$ for every $3\leq k\leq n$, $J_V(1-\frac{1}{d_{j_3}})=J_V(\frac{2}{d_{j_3}})=-1$  and $$\frac{1}{d_{j_3}}=f_{j_3}>f_{j_2}=\frac{1}{p_2}.$$
This gives that $$\frac{2m}{p_1}<1-\frac{1}{d_{j_3}}<\frac{m}{p_2}$$ for some $1\leq m\leq p_2-1$, and that there is at least one boundary point of $V$ between $1-\frac{1}{d_{i_3}}$ and $\frac{2m}{p_1}$.
If $1-\frac{2}{d_{j_3}}<\frac{2m}{p_1}$ then  $n\geq4$ and $J_V(1-f_{j_4})=1$. This is impossible since one should have $j_4\in I_{1,-1}$. Hence, we have shown  $$2-\frac{2m}{p_2}<\frac{2}{d_{j_3}}<1-\frac{2m}{p_1}=1-\frac{2m}{2p_2+1},$$
which leads to a contradiction since $m\leq p_2-1$.
\vskip2mm
(4) Assume $a_0=-2$, $b_0=2$. Then $i_1\in I_{-1,0}\cup I_{0,-1}\cup I_{-1,1}\cup I_{-1,-1}$, and
$j_1\in I_{1,0}\cup I_{0,1}\cup I_{1,1}\cup I_{-1,1}$. By Lemma \ref{i1j1} (2) (3), $i_1\not\in I_{0,-1}$ and $j_1\not\in I_{1,0}$. It follows from Lemma \ref{0111-11} (3) that there are three possible cases:
\begin{itemize}
  \item [(i)] $I_{0,1}=\{j_1\}$ and $n_{1,1}=n_{-1,1}=0$;
  \item [(ii)] $I_{1,1}=\{j_1\}$ and $n_{0,1}=n_{-1,1}=0$;
  \item  [(iii)] $I_{-1,1}=\{j_1\}$ and $n_{0,1}=n_{1,1}=0$.
\end{itemize}
In either case, we further have $n_{-1,0}=0$ by Lemma \ref{0111-11} (3) and Lemma \ref{10-10} (3). In what follows, we will prove that Cases (ii) and (iii) cannot occur.

In Case (ii), one necessarily has $i_1\in I_{-1,-1}$. Thus $i_1=j_1$ by Lemma \ref{i1j1} (1), which is impossible.

Now consider Case (iii). In this case, we also have $n_{-1,-1}=0$ by Lemma \ref{0111-11} (3),
forcing $i_1\in I_{-1,1}$. Hence, Lemma \ref{i1j1} (1) gives $i_1=j_1$. Since $$n_{0,1}=n_{1,1}=n_{-1,0}=n_{-1,-1}=0,$$ it follows from Lemma \ref{a0+b0} that
$$\sum_{i\in I_{1,0}}\phi(d_i)=\sum_{i\in I_{0,-1}}\phi(d_i),$$
and thus $n_{1,0}$ and $n_{0,-1}$ are either simultaneously
zero or simultaneously nonzero.

 First, we assume $n_{1,0}=n_{0,-1}=0$. Then $i_k\in I_{1,-1}$ for every $2\leq k\leq n$. Set $m_0=[\frac{d_{i_1}}{d_{i_2}}]$.
The inequality $$\frac{1}{d_{i_1}}=e_{i_1}<e_{i_2}=\frac{1}{d_{i_2}}$$ gives $m_0\geq1$.
Since $$\frac{m_0}{d_{i_1}}<\frac{1}{d_{i_2}}<\frac{m_0+1}{d_{i_1}}$$ and $J_V(\frac{1}{d_{i_2}})=1$, we see that if both $m_0$ and $m_0+1$ are relatively prime to $d_{i_1}$, then $J_V(\frac{m_0}{d_{i_1}})J_V(\frac{m_0+1}{d_{i_1}})=-1$. This would imply that there exists an  irreducible fraction $\frac{l}{d_{i_k}}\ (2\leq k\leq n)$, such that
$$\frac{m_0}{d_{i_1}}<\frac{l}{d_{i_k}}<\frac{m_0+1}{d_{i_1}}$$ and $J_V(\frac{l}{d_{i_k}})=-1$.
It follows that $l$ is even (since $i_k\in I_{1,-1}$) and
$$\frac{l}{d_{i_k}}\geq\frac{2}{d_{i_k}}=2e_{i_k}\geq2e_{i_2}=\frac{2}{d_{i_2}}\geq
\frac{2m_0}{d_{i_1}}\geq\frac{m_0+1}{d_{i_1}}>\frac{l}{d_{i_k}},$$ which is impossible. Hence,  $d_{i_1}$ cannot be a prime, and $\gcd(m_0(m_0+1), d_{i_1})>1$.
Let $p$ be the smallest prime factor of $d_{i_1}$. Then $p\leq m_0+1$, $J_V(\frac{p-1}{d_{i_1}})=J_V(\frac{p+1}{d_{i_1}})=1$ and $J_V(\frac{p}{d_{i_1}})=0$.
So there exists an  irreducible fraction $\frac{l'}{d_{i_{k'}}}\ (2\leq k'\leq n)$, such that
$$\frac{p-1}{d_{i_1}}<\frac{l'}{d_{i_{k'}}}<\frac{p+1}{d_{i_1}}$$ and $J_V(\frac{l'}{d_{i_{k'}}})=-1$.
Similarly, $l'$ is even and
$\frac{l'}{d_{i_{k'}}}\geq
\frac{2m_0}{d_{i_1}}$, forcing $p+1>2m_0\geq2p-2$. This  also leads to a contradiction.

We then assume $n_{1,0},n_{0,-1}\geq1$.
Fix $i\in I_{1,0}$ and $j\in I_{0,-1}$.
Applying Lemma \ref{RJ} (1) to $\{i,i_1\}$, we see that
the polynomial $2-\frac{2}{3}M_{d_{i}}+M_{d_{i_1}}$ has no zeros in $\mathbb{D}^{S_q}$, and thus by Lemma \ref{Mq1Mq2} (1),
$d_{i}$ is a prime. Similarly,  by applying Lemma \ref{RJ} (2) to $\{j,i_1\}$,
we also see that $d_j$ is a prime. Finally, applying Lemma \ref{RJ} (3) to $\{i,j,i_1\}$, one has that the inequality
$$|2-M_{d_{i}}+M_{d_{i_1}}|\leq|4-M_{d_{i}}-M_{d_{j}}+2M_{d_{i_1}}|$$
holds on $\overline{\mathbb{D}^{S_q}}$.
Since both $d_i$ and $d_j$ are primes, and by Lemma \ref{image}, $M_{d_{i_1}}$ can takes value $-\frac{9}{5}$ in $\mathbb{D}^{S_{d_{i_1}}}$, the above inequality yields
$$|\frac{1}{5}-(z-1)|\leq|\frac{2}{5}-(z-1)-(w-1)|,\quad z,w\in\overline{\mathbb{D}}.$$
It is obvious that for any $z\in\overline{\mathbb{D}}$, one can take $w\in\overline{\mathbb{D}}$ such that
$$|\frac{2}{5}-(z-1)-(w-1)|=|\frac{12}{5}-z-w|=|\frac{12}{5}-z|-1.$$
Therefore, we further  have $$|\frac{6}{5}-z|+1=|\frac{1}{5}-(z-1)|+1\leq|\frac{12}{5}-z|,\quad z\in\overline{\mathbb{D}}.$$ But this fails for $z=\frac{5}{8}+i\frac{\sqrt{39}}{8}$,
and thus Case (iii) cannot occur.

We have shown that only Case (i) can occur. Therefore, $i_1\in I_{-1,-1}$ and
\begin{equation}\label{ineqfor5}
\frac{1}{d_{j_1}}=f_{j_1}<f_{i_1}=\frac{1}{d_{i_1}}=e_{i_1}<e_{j_1}=\frac{2}{d_{j_1}}.
\end{equation}
Applying Lemma \ref{RJ} (2) to $\{i_1,j_1\}$, we see that
the polynomial $2-M_{d_{i_1}}+M_{d_{j_1}}$ has no zeros in $\mathbb{D}^{S_q}$, and thus by Lemma \ref{Mq1Mq2},
$d_{i_1}$ is a prime.

For $(a,b)\in\Lambda$ with $n_{a,b}\geq1$, put
$d_{a,b}=\max\{d_i:i\in I_{a,b}\}$.

\noindent\textbf{Claim.} If $n_{1,0}\geq1$ then $n_{1,-1}\geq1$ and $[\frac{d_{i_1}}{d_{1,-1}}]<2[\frac{d_{i_1}}{d_{1,0}}]$.

For this, suppose $n_{1,0}\geq1$ and  write $m_1=[\frac{d_{i_1}}{d_{1,0}}]$. Then for any  $j\in I_{1,0}$,
\begin{equation}\label{1dj}
  \frac{1}{d_j}=e_j>e_{i_1}=\frac{1}{d_{i_1}},
\end{equation}
and
\begin{equation}\label{mdj}
\frac{m}{d_j}\leq 1-\frac{2}{d_j}\leq1-\frac{2}{d_{1,0}}\leq1-\frac{2m_1}{d_{i_1}},\quad m=1,3,\cdots,d_j-2.
\end{equation} (\ref{1dj}) immediately gives $m_1\geq1$.
 To reach a contradiction, assume conversely that $n_{1,-1}=0$ or $[\frac{d_{i_1}}{d_i}]\geq2m_1$ for every $i\in I_{1,-1}$. Thus, if $i\in I_{1,-1}$ then
\begin{equation}\label{mdi}
\frac{m}{d_i}\leq 1-\frac{1}{d_i}\leq1-\frac{1}{d_{1,-1}}\leq1-
\frac{1}{d_{i_1}}[\frac{d_{i_1}}{d_{1,-1}}]\leq1-\frac{2m_1}{d_{i_1}},\quad m=1,2,\cdots,d_i-1.
\end{equation}

 Now we will show
$J_V(\frac{2m_1}{d_{j_1}})=0$ or $\frac{2m_1}{d_{j_1}}>\frac{m_1+1}{d_{i_1}}$.
In fact, if $J_V(\frac{2m_1}{d_{j_1}})\neq0$ and  $\frac{2m_1}{d_{j_1}}<\frac{m_1+1}{d_{i_1}}$,
then both $\frac{1}{d_{1,0}}$ and $\frac{2m_1}{d_{j_1}}$ belong to the interval $(\frac{m_1}{d_{i_1}},\frac{m_1+1}{d_{i_1}})$ (see (\ref{ineqfor5}) and the definition of $m_1$), and $J_V(\frac{1}{d_{1,0}})=J_V(\frac{2m_1}{d_{j_1}})=1$. This implies that there exists an irreducible fraction $\frac{l}{d_\nu}\ (\nu\in I)$ between $\frac{1}{d_{1,0}}$ and $\frac{2m_1}{d_{j_1}}$, such that $J_V(\frac{l}{d_\nu})=-1$, which gives $\nu\in I_{0,-1}\cup I_{1,-1}\cup (I_{-1,-1}\setminus\{i_1\})$.
Since $$1-f_\nu\geq1-e_\nu\geq1-\frac{l}{d_\nu}>1-\frac{m_1+1}{d_{i_1}},$$
we have $$1-\frac{m_1'+1}{d_{i_1}}<1-f_\nu<1-\frac{m_1'}{d_{i_1}}$$ for some $0\leq m_1'\leq m_1$.
Note that $J_V(1-\frac{m_1'+1}{d_{i_1}})=J_V(1-f_\nu)=-1$ and $J_V(1-\frac{m_1'}{d_{i_1}})<0$. Then there exist two irreducible fractions $\frac{l_k}{d_{\nu_k}}\ (k=1,2)$, such that
$$1-\frac{2m_1}{d_{i_1}}\leq1-\frac{m_1'+1}{d_{i_1}}<\frac{l_1}{d_{\nu_1}}<1-f_\nu<\frac{l_2}{d_{\nu_2}}
<1-\frac{m_1'}{d_{i_1}}$$
and $J_V(\frac{l_k}{d_{\nu_k}})=1\ (k=1,2)$.
This yields that for $k\in\{1,2\}$, $\nu_k=j_1$ or $\nu_k\in I_{1,0}\cup I_{1,-1}$.
By (\ref{mdj}) and (\ref{mdi}), $\nu_k\not\in I_{1,0}\cup I_{1,-1}\ (k=1,2)$. Hence, $\nu_1=\nu_2=j_1$, and thus $\frac{2}{d_{j_1}}<\frac{1}{d_{i_1}}$ (since $l_2-l_1$ is a positive even integer),  contradicting with (\ref{ineqfor5}). So we have shown that
$J_V(\frac{2m_1}{d_{j_1}})=0$ or $\frac{2m_1}{d_{j_1}}>\frac{m_1+1}{d_{i_1}}$. As a consequence, $m_1\geq2$.

For each $1\leq m\leq2m_1-1$, take an irreducible fraction $\frac{l_m'}{d_{\nu_m'}}\in (1-\frac{m+1}{d_{i_1}},1-\frac{m}{d_{i_1}})$ satisfying $J_V(\frac{l_m'}{d_{\nu_m'}})=1$.
Again by (\ref{mdj}) and (\ref{mdi}), for any $1\leq m\leq2m_1-1$ we have $\nu'_m\not\in I_{1,0}\cup I_{1,-1}$, and then $\nu'_m=j_1$ and $l_m'$ is even. On one hand, since $1-\frac{1}{d_{j_1}}>1-\frac{1}{d_{i_1}}$, $$l_m'\leq l_1'-2(m-1)\leq d_{j_1}-2m-1$$
for each $1\leq m\leq2m_1-1$. On the other hand, since for each $1\leq m\leq2m_1-1$, $1-\frac{1}{d_{j_1}}-\frac{l_m'}{d_{j_1}}<\frac{m+1}{d_{i_1}}$, it follows that
$$d_{j_1}-1-l_m'<\frac{d_{j_1}(m+1)}{d_{i_1}}<2(m+1),$$
 i.e., $d_{j_1}-1-l_m'\leq2m$.
Therefore
\begin{equation}\label{dj1-2m-1}l_m'=d_{j_1}-2m-1,\quad m=1,2,\cdots,2m_1-1.\end{equation}
If $J_V(\frac{2m_1}{d_{j_1}})=0$ then $d_{j_1}$ is not a prime, and $c:=\gcd(m_1,d_{j_1})>1$.
It follows that $J_V(1-\frac{c}{d_{j_1}})=0$, which contradicts with (\ref{dj1-2m-1})
since $c\leq m_1\leq4m_1-1$.
If $\frac{2m_1}{d_{j_1}}>\frac{m_1+1}{d_{i_1}}$ then by (\ref{dj1-2m-1}),
$$\frac{2m_1}{m_1+1}>\frac{d_{j_1}}{d_{i_1}}=\frac{d_{j_1}}{2m_1}\cdot\frac{2m_1}{d_{i_1}}
>\frac{d_{j_1}}{2m_1}\cdot\frac{d_{j_1}-l_{2m_1-1}'}{d_{j_1}}=\frac{4m_1-1}{2m_1},$$
which is also a contradiction. Thus, the claim has been proved.

We then use the claim to show \begin{equation}\label{101-1}
                                n_{1,0}=n_{1,-1}=0.
                              \end{equation} Suppose $n_{1,-1}\geq1$ and write $m_2=[\frac{d_{i_1}}{d_{1,-1}}]$.  For any $i\in I_{1,-1}$,
$$\frac{1}{d_i}=e_i>e_{i_1}=\frac{1}{d_{i_1}},$$
forcing $m_2\geq1$. Since $$1-\frac{m_2+1}{d_{i_1}}<1-\frac{1}{d_{1,-1}}<1-\frac{m_2}{d_{i_1}}$$
and $$J_V(1-\frac{m_2+1}{d_{i_1}})=J_V(1-\frac{1}{d_{1,-1}})=J_V(1-\frac{m_2}{d_{i_1}})=-1,$$
there exist two irreducible fractions $\frac{l_k}{d_{\nu_k}}\ (k=1,2)$, such that
$$1-\frac{m_2+1}{d_{i_1}}<\frac{l_1}{d_{\nu_1}}<1-\frac{1}{d_{1,-1}}<\frac{l_2}{d_{\nu_2}}<1-\frac{m_2}{d_{i_1}}$$
and $J_V(\frac{l_k}{d_{\nu_k}})=1\ (k=1,2)$.
This yields that for $k\in\{1,2\}$, $\nu_k=j_1$ or $\nu_k\in I_{1,0}\cup I_{1,-1}$.
By (\ref{ineqfor5}), $\nu_1$ and $\nu_2$ cannot be equal to $j_1$ simultaneously.
Moreover, since for any  $i\in I_{1,-1}$, $J_V(1-\frac{1}{d_i})=-1$ and
$$1-\frac{m}{d_i}\leq1-\frac{2}{d_i}\leq1-\frac{2}{d_{1,-1}}<1-\frac{2m_2}{d_{i_1}}\leq1-\frac{m_2+1}{d_{i_1}}<\frac{l_1}{d_{\nu_1}}<\frac{l_2}{d_{\nu_2}}$$
for every $m\geq2$, we have $\nu_k\not\in I_{1,-1}\ (k=1,2)$. Therefore, $\nu_s\in I_{1,0}$ for some $s\in\{1,2\}$, and then $l_s$ is odd. It follows that
$$\frac{m_2+1}{2}\geq\frac{m_2+1}{d_{\nu_s}-l_s}
=\frac{d_{i_1}}{d_{\nu_s}}\cdot\frac{\frac{m_2+1}{d_{i_1}}}{1-\frac{l_s}{d_{\nu_s}}}
>\frac{d_{i_1}}{d_{\nu_s}}
\geq\frac{d_{i_1}}{d_{1,0}}\geq[\frac{d_{i_1}}{d_{1,0}}],$$
i.e., $m_2\geq2[\frac{d_{i_1}}{d_{1,0}}]$,  contradicting with the claim. Hence, we have shown (\ref{101-1}).

Recall  $I_{0,1}=\{j_1\}$ and $n_{1,1}=n_{-1,1}=n_{-1,0}=0$.
With (\ref{101-1}), Lemma \ref{a0+b0} and (\ref{ineqfor5}) gives
$$\phi(d_{j_1})=\sum_{i\in I_{0,-1}}\phi(d_i)+2\sum_{i\in I_{-1,-1}}\phi(d_i)\geq2\phi(d_{i_1})=2d_{i_1}-2\geq d_{j_1}-1\geq \phi(d_{j_1}).$$
This yields that $d_{j_2}$ is also a prime, $d_{j_1}=2d_{i_1}-1$, $I_{-1,-1}=\{i_1\}$ and $n_{0,-1}=0$.
Rewrite $p_1=d_{j_1}$, $p_2=d_{i_1}$. Then for each $1\leq m\leq p_2$,
$\frac{m}{p_2}$ is a right endpoint of some component interval of $V$, and $\frac{2m-2}{p_1}$ is a left endpoint of some component interval of $V$. Moreover,
$J_V(\frac{2m-1}{p_1})=0\ (1\leq m\leq\frac{p_1-1}{2})$. The proof is complete.
$\hfill \square $
\vskip3mm
\noindent \textbf{Acknowledgement}
This work was supported by the National Natural Science Foundation of China (Grant No.12231005), National Key R$\&$D Program of China (2024YFA1013400), and  Natural
Science Foundation of Sichuan (2024NSFSC1338).

\appendix
\section*{Appendix}
In this  Appendix, we will  present the proofs of Lemmas \ref{t=36}, \ref{abineq} and \ref{image}.
\begin{lemA}\label{t=36'} Let $V,g$ and $R$ be given as in (\ref{Vdisplay}), (\ref{g(m)}) and (\ref{R}).
  \begin{itemize}
     \item [(1)] If $t_V=3$
     and $g\in E_6$, then $R(z_1,z_2)$ has no zeros in $\mathbb{D}^{2}$ if and only if $V=V_{3,0}, V_{3,1}, (0,\frac{2}{3})$ or $(\frac{1}{3},1)$.
     \item [(2)] If $t_V=6$
     and $g\in E_{12}$, then $R(z_1,z_2)$ has no zeros in $\mathbb{D}^{2}$ if and only if $V=V_{6,0}, V_{6,1}, (0,\frac{1}{6})\cup(\frac{1}{3},\frac{5}{6}),
      (0,\frac{1}{3})\cup(\frac{1}{2},\frac{2}{3}),
      (0,\frac{1}{3})\cup(\frac{1}{2},1),
      (0,\frac{1}{2})\cup(\frac{2}{3},1)$ or
      $(\frac{1}{6},\frac{2}{3})\cup(\frac{5}{6},1)$.
   \end{itemize}
\end{lemA}

\begin{lemA}\label{abineq'} Suppose $k\geq2$, $a,b\in\{-1,0,1\}$ and $ab=0$. Then
  $$|a\xi^k-(2a+b)\xi^{k-1}-1|
  >|(b-a)(\xi^k-\xi^{k-1})+2|$$
for some $\xi\in\overline{\mathbb{D}}$.
\end{lemA}

\begin{lemA} \label{image'}
If $q\ (q\geq2)$ is neither a prime nor a product of two primes, then
 $$M_q(\mathbb{D}^{S_q})\supseteq\{re^{i\theta}:0<r\leq\frac{9}{5}, \frac{3\pi}{4}\leq\theta\leq\frac{5\pi}{4}\}\cup\{\frac{2}{5}\}.$$

 In particular, if $q\ (q\geq2)$ is not a product of two primes, then
 $$M_q(\mathbb{D}^{S_q})\supseteq[-\frac{9}{5},0),\quad
 M_q(\overline{\mathbb{D}^{S_q}})\supseteq\{i-1, -i-1\}.$$
\end{lemA}

To prove Lemma \ref{t=36'}, we need the following result.
\begin{lemA} \label{1to6lem}
   For any $z\in\overline{\mathbb{D}}$, we have
   \begin{itemize}
     \item [(1)] $|z^2-2z+3|\geq|z-1|^k,\ k=1,2$.
     \item [(2)] $|z^2+z-3|\geq|z^2-z-1|$.
   \end{itemize}
 \end{lemA}
 \begin{proof}
   Put $x=\mathrm{Re}\,z$ and $y=\mathrm{Im}\,z$.

   (1) Since $$\mathrm{Re}\,(z-1)^2=(x-1)^2-y^2\geq(x-1)^2+x^2-1\geq-1,$$
   one has $$|z^2-2z+3|=|(z-1)^2+2|\geq|(z-1)^2|=|z-1|^2.$$
   When $|z-1|\geq1$,
   $$|z^2-2z+3|\geq|z-1|^2\geq|z-1|;$$
   when $|z-1|\leq1$,
   $$|z^2-2z+3|\geq2-|z-1|^2\geq1\geq|z-1|.$$
   \vskip2mm

   (2) A direct calculation gives \begin{equation*}
   \begin{split}
      |z^2-z-3|^2-|z^2-z-1|^2 & =(x^2+x-3-y^2)^2+(2xy+y)^2- \\
        & \ \  [(x^2-x-1-y^2)^2+(2xy-y)^2] \\
        & =4(2-x^2)(1-x)+4(x+1)y^2 \\
        & \geq0.
   \end{split}
       \end{equation*}
 \end{proof}

\noindent\textbf{Proof of Lemma \ref{t=36'}.}
Without loss of generality, assume $V\neq V_{t_V,0}$, $V_{t_V,1}$ for $t_V\in\{3,6\}$.

(1) By a direct calculation,
 \begin{equation*}
  \begin{split}R(z_1,z_2) & =g(6)+g(3)M_2(z_1)+g(2)M_3(z_2)
 +g(1)M_6(z_1,z_2)  \\
 & =g(0)+g(3)(z_1-1)+g(2)(z_2-1)+g(1)(z_1-1)(z_2-1).
 \end{split}
 \end{equation*}
 There are only $4$ possibilities:
 \begin{itemize}
   \item [(a)] $g(0)=2$, $g(1)=-1$, $g(2)=g(3)=0$, $$R(z_1,z_2)=1+z_1+z_2-z_1z_2.$$ Then $R$ has  the zero $(-\frac{1}{2},-\frac{1}{3})$ in $\mathbb{D}^2$.
   \item [(b)] $g(0)=2$, $g(2)=-1$, $g(1)=g(3)=0$, $$R(z_1,z_2)=3-z_2.$$ Then  $R$ has no zeros in $\mathbb{D}^2$.
   \item [(c)] $g(1)=1$, $g(3)=-2$, $g(0)=g(2)=0$, $$R(z_1,z_2)=(z_1-1)(z_2-3).$$ Then  $R$ has no zeros in $\mathbb{D}^2$.
   \item [(d)] $g(2)=1$, $g(3)=-2$, $g(0)=g(1)=0$, $$R(z_1,z_2)=1-2z_1+z_2.$$ Then  $R$ has the zero $(\frac{1}{2},0)$ in $\mathbb{D}^2$.
 \end{itemize}
\vskip2mm
 (2) By a direct calculation,
 \begin{equation*}
  \begin{split}R(z_1,z_2) & =g(12)+g(6)M_2(z_1)+g(4)M_3(z_2)+g(3)M_4(z_1)+ \\
 &\  g(2)M_6(z_1,z_2)
 +g(1)M_{12}(z_1,z_2)  \\
 & =g(0)+g(6)(z_1-1)+g(4)(z_2-1)+g(3)z_1(z_1-1)+  \\
 &\  g(2)(z_1-1)(z_2-1)+g(1)z_1(z_1-1)(z_2-1).
 \end{split}
 \end{equation*}

There are  $12$ possibilities:
\begin{itemize}
  \item [(a)] $g(2)=1$, $g(3)=-1$, $g(0)=g(1)=g(5)=g(4)=g(6)=0$, $$R(z_1,z_2)=(z_1-1)(z_2-z_1-1).$$ Then $R$ has the zero $(-\frac{1}{2},\frac{1}{2})$ in $\mathbb{D}^2$.
  \item [(b)] $g(3)=1$, $g(4)=-1$, $g(0)=g(1)=g(5)=g(2)=g(6)=0$, $$R(z_1,z_2)=z_1^2-z_1-z_2+1.$$ Then $R$ has the zero $(\frac{1}{2},\frac{3}{4})$ in $\mathbb{D}^2$.
  \item [(c)] $g(0)=2$, $g(1)=g(5)=-1$, $g(2)=1$, $g(3)=g(4)=g(6)=0$, $$R(z_1,z_2)=-(z_1-1)^2z_2+z_1^2-2z_1+3.$$ Since for any $z\in\mathbb{D}$, $z^2-2z+3\neq0$ and $|z^2-2z+3|\geq|z-1|^2$ by Lemma \ref{1to6lem} (1), $R$ has no zeros  in $\mathbb{D}^2$.
  \item [(d)] $g(0)=2$, $g(1)=g(5)=-1$, $g(3)=1$, $g(2)=g(4)=g(6)=0$, $$R(z_1,z_2)=2-z_1(z_1-1)(z_2-2).$$ Then $R$ has the zero $(\lambda,-\frac{1}{2})$ in $\mathbb{D}^2$, where $\lambda$ is a root of $z^2-z+\frac{4}{5}$.
  \item [(e)] $g(0)=2$, $g(1)=g(5)=-1$, $g(4)=1$, $g(2)=g(3)=g(6)=0$, $$R(z_1,z_2)=2-(z_1^2-z_1-1)(z_2-1).$$ Then $R$ has the zero $(\frac{1}{2},-\frac{3}{5})$ in $\mathbb{D}^2$.
  \item [(f)] $g(1)=g(5)=1$, $g(2)=-1$, $g(6)=-2$, $g(0)=g(3)=g(4)=0$, $$R(z_1,z_2)=(z_1-1)(z_1z_2-z_1-z_2-1).$$ Then $R$ has the zero $(-\frac{1}{2},-\frac{1}{3})$ in $\mathbb{D}^2$.
  \item [(g)] $g(1)=g(5)=1$, $g(3)=-1$, $g(6)=-2$, $g(0)=g(2)=g(4)=0$, $$R(z_1,z_2)=(z_1-1)(z_1z_2-2z_1-2).$$ Then $R$ has the zero $(-\frac{3}{4},-\frac{2}{3})$ in $\mathbb{D}^2$.
  \item [(h)] $g(1)=g(5)=1$, $g(4)=-1$, $g(6)=-2$, $g(0)=g(2)=g(3)=0$, $$R(z_1,z_2)=(z_1^2-z_1-1)z_2-(z_1^2+z_1-3).$$ Since for any $z\in\mathbb{D}$, $z^2-z+3\neq0$ and $|z^2+z-3|\geq|z^2-z-1|$  by Lemma \ref{1to6lem} (2),  $R$ has no zeros in $\mathbb{D}^2$.
  \item [(i)] $g(0)=2$, $g(2)=-1$, $g(3)=1$, $g(4)=-1$, $g(1)=g(5)=g(6)=0$, $$R(z_1,z_2)=2+z_1^2-z_1z_2.$$ Then $R$ has no zeros in $\mathbb{D}^2$.
  \item [(j)] $g(2)=1$, $g(3)=-1$, $g(4)=1$, $g(6)=-2$, $g(0)=g(1)=g(5)=0$, $$R(z_1,z_2)=2-z_1^2+z_1z_2-2z_1.$$ Then $R$ has the zero $(\sqrt{3}-1,0)$ in $\mathbb{D}^2$.
  \item [(k)] $g(0)=2$, $g(2)=-1$, $g(3)=1$, $g(6)=-2$, $g(1)=g(5)=g(4)=0$, $$R(z_1,z_2)=-(z_1-1)z_2+z_1^2-2z_1+3.$$ Since for any $z\in\mathbb{D}$, $z^2-2z+3\neq0$ and $|z^2-2z+3|\geq|z-1|$  by Lemma \ref{1to6lem} (1), $R$ has no zeros  in $\mathbb{D}^2$.
  \item [(l)] $g(0)=2$,  $g(3)=-1$, $g(4)=1$, $g(6)=-2$, $g(1)=g(5)=g(2)=0$, $$R(z_1,z_2)=3-z_1^2-z_1+z_2.$$ Then $R$ has no zeros in $\mathbb{D}^2$.
\end{itemize}
$\hfill \square $
\vskip2mm

To prove Lemma \ref{abineq'}, we need to establish the following.
    \begin{lemA}\label{zk(z-1)}
   Suppose $k\geq3$, $0<\theta_0\leq\pi$, $0<r<2\cos\frac{\theta_0}{2k-1}$ and $k\pi-\theta_0\leq\theta\leq k\pi+\theta_0$. Then there exists $z\in\mathbb{D}$, such that $z^k-z^{k-1}=re^{i\theta}$.
\end{lemA}
\begin{proof}
   Let
  $f(z)=z^k-z^{k-1}$. By the continuity of $f$ on $(-1,0)$,  $f(z)=(-1)^{k}\cdot r=re^{ik\pi}$ for some $z\in(-1,0)$. We also note that the image of $f$ on $\mathbb{D}$ is symmetric with respect to the real line. So it suffices to consider $\theta\in[k\pi-\theta_0,k\pi)$, and then
  $$\frac{\pi}{2}\leq\frac{\theta-\pi}{k-1}<\frac{2\theta-\pi}{2k-1}<\pi.$$ Set
$a=\frac{\theta-\pi}{k-1}$, $b=\frac{2\theta-\pi}{2k-1}$ and
   $$\lambda(\alpha)=\frac{\sin(\theta-(k-1)\alpha)}{\sin(\theta-k\alpha)]},\quad \alpha\in[a,b].$$
   Since for $a\leq\alpha\leq b$, one has $\frac{\pi}{2}\leq \theta-k\alpha\leq \pi$ and
   $0<\theta-(k+1)\alpha\leq\frac{\pi}{2}$, and thus $\lambda$ is strictly decreasing on $[a,b]$.
   It follows that $0<\lambda(\alpha)<1$ for $a<\lambda<b$ since $\lambda(a)=1$ and $\lambda(b)=0$.
  Put $z(\alpha)=\lambda(\alpha)e^{i\alpha}$ and $w(\alpha)=f(z(\alpha))$. Then for any $a<\alpha<b$, $z(\alpha)\in\mathbb{D}$ and
  \begin{equation*}
  \begin{split}\arg(z(\alpha)-1) &=-\arctan\frac{\lambda(\alpha)\sin\alpha}{1-\lambda(\alpha)\cos\alpha}  \\
  &=-\arctan\frac{\sin(\theta-(k-1)\alpha)\sin\alpha}{\sin(\theta-k\alpha)-\sin(\theta-(k-1)\alpha)\cos\alpha}    \\
  &=-\arctan\frac{\sin(\theta-(k-1)\alpha)\sin\alpha}{-\cos(\theta-(k-1)\alpha)\sin\alpha}   \\
  &=\theta-(k-1)\alpha,
  \end{split}
  \end{equation*} which gives $\arg w(\alpha)=\theta$. Since $|w(b)|=0$ and
   $$|w(a)|=2\cos\frac{k\pi-\theta}{2k-1}\geq2\cos\frac{\theta_0}{2k-1}>r,$$
   there exists $a<\alpha_0<b$ such that $|w(\alpha_0)|=r$. Therefore, one can take $z=z(\alpha_0)$ to meet the requirement.
\end{proof}

\noindent\textbf{Proof of Lemma \ref{abineq'}.}
Set
$$f(z)=|az^k-(2a+b)z^{k-1}-1|
  -|(b-a)(z^k-z^{k-1})+2|,\quad z\in\mathbb{C}.$$
There are four possibilities:
  \begin{itemize}
    \item [(a)] $a=1$, $b=0$, and then
    $$f(z)=|z^k-2z^{k-1}-1|-|z^k-z^{k-1}-2|;$$
    \item [(b)] $a=-1$, $b=0$, and then
    $$f(z)=|z^k-2z^{k-1}+1|-|z^k-z^{k-1}+2|;$$
    \item [(c)] $a=0$, $b=1$, and then
    $$f(z)=|z^{k-1}+1|-|z^k-z^{k-1}+2|;$$
    \item [(d)] $a=0$, $b=-1$, and then
    $$f(z)=|z^{k-1}-1|-|z^k-z^{k-1}-2|.$$
  \end{itemize}
  When $k=2$,  in both cases (a) and (d), $f(\xi)>0$ for $\xi=-1$;  in the case (b), $f(\xi)>0$ for $\xi=i$; in the case (c), $f(\xi)>0$ for $\xi=\frac{1}{2}+\frac{\sqrt{3}}{2}i$. Now assume $k\geq3$. Since $$2\cos\frac{\pi}{2k-1}\geq2\cos\frac{\pi}{5}>\frac{3}{2},$$ letting $\theta_0=\pi$ in Lemma \ref{zk(z-1)},
  we see that there exist $\lambda, \mu\in\mathbb{D}$, such that $$\lambda^k-\lambda^{k-1}=\frac{3}{2},\quad\mu^k-\mu^{k-1}=-\frac{3}{2}.$$
  Therefore,
  $$\frac{1}{2}\geq|\lambda^k-2\lambda^{k-1}-1|=|2-\lambda^k|\geq2-|\lambda|^k>1,$$
  $$\frac{1}{2}\geq|\lambda^{k-1}-1|=|\lambda^k-\frac{5}{2}|\geq\frac{5}{2}-|\lambda|^k>\frac{3}{2}.$$
  That is to say, in both cases (a) and (d), $f(\xi)>0$ for $\xi=\lambda$.
  Similarly, in both cases (b) and (c), $f(\xi)>0$ for $\xi=\mu$.
$\hfill \square $
\vskip2mm

To prove Lemma \ref{image'}, we need an auxiliary result.
\begin{lemA}\label{(z-1)k}
\begin{itemize}
    \item [(1)] Suppose $k\geq2$ and $w\in\mathbb{D}$, there exists $z\in\mathbb{D}$, such that $z^k-z^{k-1}=w$.
    \item [(2)]
   Suppose $k\in\{3,4\}$, $0<r\leq\frac{9}{5}$ and $\frac{3\pi}{4}\leq\theta\leq\frac{5\pi}{4}$. Then there exist $z,w\in\mathbb{D}$, such that $(z-1)^k=re^{i\theta}$ and
        $(w-1)^k=\frac{2}{5}$.
        \end{itemize}
\end{lemA}
\begin{proof}
(1) Since the roots $\alpha_1, \alpha_2, \cdots, \alpha_{k+1}$ of $f(z)=z^k(z-1)-w$ satisfy $$|\alpha_1\alpha_2\cdots\alpha_{k+1}|=|w|<1,$$
   there exists $i\in\{1,2,\cdots,k+1\}$ such that $\alpha_i\in\mathbb{D}$.
\vskip2mm

  (2)
  First note that for $\lambda>0$ and $\frac{\pi}{2}<\alpha<\frac{3\pi}{2}$,
   $|\lambda e^{i\alpha}+1|<1$ if and only if $\lambda<-2\cos\alpha$. In particular, $|(\frac{2}{5})^{\frac{1}{3}}e^{i\frac{2\pi}{3}}+1|<1$. Then we can take $w=(\frac{2}{5})^{\frac{1}{3}}e^{i\frac{2\pi}{3}}+1$ for $k=3$ or $w=1-(\frac{2}{5})^{\frac{1}{4}}$ for $k=4$. Now put $\lambda=r^{\frac{1}{k}}$,
    $\alpha=\frac{\theta+2\pi}{k}$ and $z=\lambda e^{i\alpha}+1$.
   Then $(z-1)^k=re^{i\theta}$. Moreover, when $k=3$, $\frac{11\pi}{12}<\alpha<\frac{13\pi}{12}$ and
   $$\lambda\leq(\frac{9}{5})^{\frac{1}{3}}<2\cos\frac{\pi}{12}<-2\cos\alpha;$$
   when $k=4$, $\frac{11\pi}{16}<\alpha<\frac{13\pi}{16}$ and
   $$\lambda\leq(\frac{9}{5})^{\frac{1}{4}}<2\cos\frac{5\pi}{16}<-2\cos\alpha.$$
   In either case, we have $z\in\mathbb{D}$.
\end{proof}

\vskip2mm
\noindent\textbf{Proof of Lemma \ref{image'}.}
We have the following  three cases.

\noindent\textbf{Case 1.} $q=p^k$ for some prime $p$ and some integer $k\ (k\geq3)$.

By (\ref{repofMq}), $M_q$ only depends on one variable and $M_q(z)=z^k-z^{k-1}$.
Taking $\theta_0=\pi$ in Lemma \ref{zk(z-1)}, we have $$\label{Mq(D)}
  M_q(\mathbb{D})\supseteq\{z\in\mathbb{C}:|z|<2\cos\frac{\pi}{2k-1}\}.
$$
Note that $2\cos\frac{\pi}{5}>\frac{2}{5}$ and for $k\geq4$, $$2\cos\frac{\pi}{2k-1}\geq2\cos\frac{\pi}{7}>\frac{9}{5}.$$ It remains to show that when $k=3$,
$$M_q(\mathbb{D})\supseteq\{re^{i\theta}:0<r\leq\frac{9}{5}, \frac{3\pi}{4}\leq\theta\leq\frac{5\pi}{4}\}.$$
For $k=3$, taking $\theta_0=\frac{\pi}{4}$ in Lemma \ref{zk(z-1)}, we have $$M_q(\mathbb{D})\supseteq\{re^{i\theta}:0<r<2\cos\frac{\pi}{20}, \frac{3\pi}{4}\leq\theta\leq\frac{5\pi}{4}\}.$$
This completes the proof for Case 1.
\vskip2mm

\noindent\textbf{Case 2.} $q=p_{j_1}p_{j_2}\cdots p_{j_l}$, where $l\geq3$ and
$j_1<j_2<\cdots<j_l$.

By (\ref{repofMq}), $M_q$  depends on $l$ variables $z_{j_1}, z_{j_2}, \cdots, z_{j_l}$ and $$M_q(z)=(z_{j_1}-1)(z_{j_2}-1)\cdots(z_{j_l}-1).$$
When $l$ is odd,  by taking $z_{j_1}=z_{j_2}=z_{j_3}$ and $z_{j_k}=0\ (4\leq k\leq l)$,  one obtains that $$M_q(\mathbb{D}^{S_q})\supseteq\{w:w=(z-1)^3\ \text{for some}\ z\in\mathbb{D}\}.$$
Similarly, when $l$ is even,
$$M_q(\mathbb{D}^{S_q})\supseteq\{w:w=(z-1)^4\ \text{for some}\ z\in\mathbb{D}\}.$$
Hence, in Case 2, the conclusion immediately follows from Lemma \ref{(z-1)k} (2).
 \vskip2mm

\noindent\textbf{Case 3.} $q$ is not a power of some prime, and $p^2\mid q$ for some prime $p$.

Take $k\in\mathbb{N}$ such that $p^k\mid q$ and $p^{k+1}\nmid q$, and put $q'=\frac{q}{p^k}$.
Then $M_q=M_{p^k}M_{q'}$ and $$M_q(\mathbb{D}^{S_q})=M_{p^k}(\mathbb{D}^{S_{p^k}})\cdot M_{q'}(\mathbb{D}^{S_{q'}}):=\{\lambda\mu:\lambda\in M_{p^k}(\mathbb{D}^{S_{p^k}}),\mu\in M_{q'}(\mathbb{D}^{S_{q'}})\}.$$
Since $k\geq2$, by Lemma \ref{(z-1)k} (1),
$M_{p^k}(\mathbb{D}^{S_{p^k}})\supseteq\mathbb{D}$.
Moreover, letting $I$ denote the interval $(-1,1)$, we have $M_{q'}(\mathbb{D}^{S_{q'}})\supseteq M_{q'}(I^{S_{q'}})$. It follows that $M_{q'}(\mathbb{D}^{S_{q'}})$ contains $(0,2)$ or $(-2,0)$, and thus $$M_q(\mathbb{D}^{S_q})\supseteq\{z\in\mathbb{C}:|z|<2\}.$$
The proof is complete.
$\hfill \square $

\vskip2mm

\vskip3mm \noindent{Hui Dan, School of Mathematics, Sichuan
University, Chengdu, Sichuan, 610065, China,
   E-mail:  hdan@scu.edu.cn

\noindent Kunyu Guo, School of Mathematical Sciences, Fudan
University, Shanghai, 200433, China, E-mail: kyguo@fudan.edu.cn

\end{document}